\RequirePackage{fix-cm}
\documentclass[smallextended]{svjour3}       % onecolumn (second format)
\smartqed  % flush right qed marks, e.g. at end of proof
\usepackage{amsmath}
\usepackage{graphicx}
\usepackage{subcaption}
\usepackage{lipsum}
\usepackage{amsfonts}
\usepackage{epstopdf}
\usepackage{algorithm,algorithmic}
\usepackage{tikz}
\usetikzlibrary{arrows.meta}

\usepackage{booktabs}

\ifpdf
\DeclareGraphicsExtensions{.eps,.pdf,.png,.jpg}
\else
\DeclareGraphicsExtensions{.eps}
\fi

\newcommand{\R}{\mathbb{R}}
\newcommand{\Hq}{\mathbb{H}}

\begin{document}

\title{Order-Specified Quaternion CP Decomposition: Models, Algorithms, and Applications\thanks{This work was funded in part by the National Natural Science
		Foundation of China under Grant Nos. 12501714,
		the Yunnan Fundamental Research Projects under Grant Nos.
		202401AU070203 and 202501AT070192, the Anhui Postdoctoral
		Scientific Research Program Foundation of the Anhui Provincial
		Department of Human Resources and Social Security under
		Grant No. 2025C1139, and the Major Project of the Natural
		Science Research Project of Anhui Provincial Universities
		under Grant No. 2025AHGXZK20062.}
}
%\subtitle{Do you have a subtitle?\\ If so, write it here}

\titlerunning{Order-Specified Quaternion CP Decomposition}        % if too long for running head

\author{Jifei Miao \and
        Niansheng Tang \and Michael K. Ng \and Juan Han \and  Kit Ian Kou
}

%\authorrunning{Short form of author list} % if too long for running head

\institute{Jifei Miao \at
School of Mathematics and Statistics, Yunnan University, Kunming, Yunnan, 650091, China
\email{jifmiao@163.com}          
\and
Niansheng Tang \at 
Yunnan Key Laboratory of Statistical Modeling and Data Analysis, Yunnan University, Kunming 650500, China
\email{nstang@ynu.edu.cn}  
\and
Michael K. Ng\at Department of Mathematics, Hong Kong Baptist University, Kowloon Tong, Hong Kong, China
\email{michael-ng@hkbu.edu.hk}  
\and
Juan Han \at School of Mathematics and Physics, Anhui Jianzhu University, Hefei, Anhui, 230601, China
\email{juanhan0604@163.com}
\and 
Kit Ian Kou \at Department of Mathematics, Faculty of Science, University of Macau
\email{kikou@umac.mo}
}

%\date{Received: date / Accepted: date}
% The correct dates will be entered by the editor

\maketitle

\begin{abstract}
Tensor CP decomposition serves as a fundamental tool for representing multiway data. While quaternion tensors are well suited to modeling color visual data, extending CP decomposition to the quaternion domain is nontrivial due to the noncommutativity of quaternion multiplication. This paper develops an order-specified quaternion CP decomposition (OS-QCP) for third-order tensors, allowing all three factor matrices to be quaternion-valued and explicitly incorporating their multiplication order into the model. We show that different multiplication orders can yield different OS-QCP ranks for the same quaternion tensor, characterize the permutation and quaternion scaling indeterminacies, and establish a sufficient condition for essential uniqueness for any prescribed multiplication order. An explicit example further shows that OS-QCP rank can depend on the multiplication order even when the complex representation of the quaternion tensor admits an essentially unique CP decomposition. An alternating least-squares (ALS) algorithm is derived by expressing quaternion left and right multiplication as real linear operators, reducing each factor update to real least-squares problems. To accommodate order dependence in tensor completion, we further introduce an order-ensemble quaternion CP (OE-QCP) model that incorporates different multiplication orders through nonnegative weights. The model jointly estimates three quaternion factor matrices and the order weights, with each single-order OS-QCP model included as a special case. The resulting completion problem is solved by a parallel row-wise masked ALS algorithm. Experiments on color video inpainting demonstrate the effectiveness and competitive performance of the proposed method.
\keywords{Quaternion tensor \and CP decomposition \and multiplication order \and tensor completion \and color video inpainting}
% \PACS{PACS code1 \and PACS code2 \and more}
%\subclass{MSC code1 \and MSC code2 \and more}
\end{abstract}

\section{Introduction}
\label{sec:introduction}

Tensor decompositions provide compact and interpretable re-presentations for multiway data and have become fundamental tools in numerical multilinear algebra, signal processing, machine learning, and imaging sciences \cite{hong2020generalized,sidiropoulos2017tensor,cichocki2015tensor,LiZZC25}. Among various tensor decomposition models, the CANDECOMP/PARAFAC (CP) decomposition is one of the most classical and widely used formats \cite{harshman1970foundations}. It represents a high-dimensional tensor as a sum of rank-one components and therefore provides a simple yet powerful way to describe separable structures along different modes \cite{kolda2009tensor}. Due to its low parameter complexity and clear multilinear interpretation, CP decomposition has been successfully applied to data compression, feature extraction, blind source separation, recommender systems, hyperspectral image analysis, and video processing \cite{chen2022unsupervised,chabriel2014joint,zhao2022fast,liu2019low}. For a third-order real tensor
\(\mathcal X\in\mathbb R^{I_1\times I_2\times I_3}\),
the classical CP decomposition takes the entry-wise form
\begin{equation}
	x_{ijk}
	=
	\sum_{r=1}^{R}a_{ir}b_{jr}c_{kr},
	\label{eq:intro_real_cp}
\end{equation}
where the smallest possible number of components \(R\)
is called the CP rank and the factor matrices characterize latent structures along the three tensor modes. 
Since the factor entries in \eqref{eq:intro_real_cp} are real-valued
and real scalar multiplication is commutative, all six multiplication
orders yield the same rank-one term.

In recent years, quaternion-valued models have attracted increasing attention in color image processing \cite{miao2021color,jia2025new,chen2022color}, color video processing \cite{he2023eigenvalues,jia2022non}, and polarimetric imaging \cite{kawai2025quaternion,Pan2022SeparableQM,flamant2020quaternion}. A quaternion can naturally encode a scalar part together with three coupled imaginary components. In particular, a color pixel with red, green, and blue (RGB) components can be represented as a pure quaternion $
\dot{q}
=
R\mathbf{i}
+
G\mathbf{j}
+
B\mathbf{k}$ \cite{miao2021color},
which treats the three color channels as an integrated vector-valued quantity rather than three independent scalar channels. This representation helps preserve the intrinsic coupling among color channels and has motivated many quaternion matrix models, such as quaternion matrix factorization \cite{miao2020quaternion} and quaternion low-rank approximation \cite{chen2019low,song2021low}. Beyond matrices, quaternion tensor models have also shown strong potential in color video and multidimensional visual data processing \cite{miao2020low,yang2026nonlinear}, because they can simultaneously preserve the multiway structure of the data and the internal coupling among vector-valued channels. Therefore, integrating classical tensor decompositions with quaternion representations is expected to provide new modeling tools for color visual data processing. As CP decomposition is one of the most fundamental tensor decomposition models, this paper investigates its quaternion extension with all factor matrices allowed to be quaternion-valued.

However, allowing all factor matrices in a quaternion CP decomposition
to be quaternion-valued is nontrivial because quaternion multiplication
is noncommutative. The multiplication order of the factor entries
must therefore be explicitly specified as part of the model.
In particular, changing the multiplication order can alter the
interaction between the vector parts of the quaternion factors
and hence the resulting product. To the best of our knowledge, the quaternion CP model developed
by Flamant et al.~\cite{flamant2026multilinear} is the only
existing CP formulation for quaternion tensors.
Their model is established within a framework of
\(\mathbb{HR}\)-multilinear forms.
For a third-order quaternion tensor, it takes the form
\begin{equation}
	\dot{x}_{ijk}
	=
	\sum_{r=1}^{R}
	\dot{a}_{ir} b_{jr} \dot{c}_{kr},
	\label{eq:intro_existing_qcp}
\end{equation}
where \(\dot{a}_{ir},\dot{c}_{kr}\in\Hq\) and
\(b_{jr}\in\R\).
Thus, the first and third factor matrices are quaternion-valued,
whereas the middle factor matrix is real-valued.
For an \(N\)-th order tensor, the formulation similarly permits
quaternion-valued factors in the first and last modes, with
real-valued factors in the remaining \(N-2\) modes.
To distinguish this formulation from the model considered here,
we refer to it as partial quaternion CP (PQCP).

The PQCP framework provides a principled approach to quaternion
tensor decomposition, but its real-valued intermediate factors
restrict the admissible factorization structure.
In particular, the middle factor in
\eqref{eq:intro_existing_qcp} contributes a real scalar to
each component, so quaternion multiplication involves only
the two outer factor entries.
Allowing all three factor matrices to be quaternion-valued
removes this restriction and permits quaternion-valued
interactions among factors associated with all three modes.
This raises the question of how to define, analyze, and compute
a CP decomposition in which the multiplication order of these
three quaternion entries is explicitly incorporated.

Motivated by this observation, this paper develops a full quaternion-valued CP framework in which all factor matrices are quaternion-valued. The proposed model can be extended to quaternion tensors of arbitrary
order. This paper focuses on third-order quaternion tensors, which
naturally represent color videos by encoding the RGB channels as
quaternion entries and retaining the two spatial modes and the
temporal mode.
Since each entry of a third-order quaternion rank-one term involves
three quaternion factors, six multiplication orders are possible.
A central question is whether changing the order leads to genuinely
different decompositions even when the factor matrices are allowed
to vary freely. To address this question, we introduce an
order-specified quaternion CP decomposition (OS-QCP) and show that
different orders can require different ranks for the same tensor.
We characterize its permutation and quaternion scaling indeterminacies
and establish a sufficient condition for essential uniqueness for
any prescribed multiplication order. An explicit example further
shows that essentially unique decompositions under different orders
can have equal ranks without their factors being equivalent under
these transformations. Moreover, order dependence can persist even
if the complex representation of the quaternion tensor admits
an essentially unique complex CP decomposition.

We also establish the existence of best rank-one approximations
and regularized low-rank solutions, and derive an alternating
least-squares (ALS) algorithm using the real-linear operators
induced by left and right quaternion multiplication.
Each factor update reduces to a real least-squares problem
while preserving the prescribed multiplication order.

In practical applications, there is often no clear a priori reason
to favor one multiplication order. Building on OS-QCP, we therefore
introduce an order-ensemble quaternion CP (OE-QCP) model that
incorporates all six orders and use it for quaternion tensor completion.
For each component, the same three quaternion factor entries are
multiplied in six orders, and the resulting products are combined
using nonnegative weights. Summing these weighted components gives
the represented tensor, as illustrated in
Fig.~\ref{fig:oeqcp_framework}.

OE-QCP jointly estimates three quaternion factor matrices and the
order weights, allowing the contribution of each order to adapt to
the observed data. All six orders act on the same factors, so the
model does not require six independent sets of factor matrices.
When one order weight equals one and the others vanish, OE-QCP
reduces to the corresponding OS-QCP model. If all factor entries
are real-valued or belong to a common complex subfield, the ordered
products coincide, and OE-QCP reduces to the corresponding real
or complex CP representation. The resulting tensor completion
problem is solved by a parallel row-wise masked ALS algorithm.

The main contributions of this paper are summarized as follows.
\begin{itemize}
	\item We introduce an order-specified quaternion CP decomposition
	(OS-QCP) with all factor matrices quaternion-valued.
	We show that multiplication order can affect both the rank and
	the factor matrices beyond permutation and quaternion scaling
	indeterminacies, and establish a sufficient condition for
	essential uniqueness for any prescribed multiplication order.
	An explicit example further demonstrates that an essentially
	unique complex CP decomposition of the complex representation
	does not eliminate this order dependence.
	
	\item We establish the existence of best rank-one approximations
	and regularized low-rank solutions for OS-QCP, and develop an
	alternating least-squares (ALS) algorithm.
	By exploiting the real-linear operators induced by left and
	right quaternion multiplication, each factor update reduces
	to a real least-squares problem while preserving the
	prescribed multiplication order.
	
	\item We propose an order-ensemble quaternion CP (OE-QCP) model
	that jointly estimates quaternion factor matrices and
	nonnegative weights for different multiplication orders,
	with every single-order OS-QCP model included as a special case.
	For tensor completion, we develop a parallel row-wise masked
	ALS algorithm. Experiments on color video inpainting
	demonstrate the effectiveness and competitive performance
	of the proposed method.
\end{itemize}

The remainder of this paper is organized as follows.
Section~\ref{sec:notations_preliminaries} introduces the notation
and preliminaries.
Section~\ref{sec:osqcp} defines OS-QCP and analyzes its order
dependence, essential uniqueness, and approximation properties.
Section~\ref{sec:als} derives the ALS algorithm for a prescribed
multiplication order.
Section~\ref{sec:completion} develops the OE-QCP model and its
associated algorithm for quaternion tensor completion.
Section~\ref{sec: experiments} reports numerical results on order
sensitivity and color video inpainting, together with ablation studies.
Section~\ref{sec:conclusion} concludes the paper.

\section{Notation and Preliminaries}
\label{sec:notations_preliminaries}

We first summarize the main notation used throughout the paper in Table~\ref{tab:notations}. 
\begin{table}[h]
	\centering
	\caption{Main notation used in the paper.}
	\label{tab:notations}
	\renewcommand{\arraystretch}{1.05}
	\begin{tabular}{p{0.27\linewidth}p{0.65\linewidth}}
		\toprule
		\textbf{Notation} & \textbf{Description} \\
		\midrule
		
		\(\mathbb R,\mathbb C,\Hq\)
		& real field, complex field, and quaternion skew field \\
		
		\(\dot q\)
		& quaternion scalar
		\(q_0+q_1\mathbf i+q_2\mathbf j+q_3\mathbf k\),
		\(q_l\in\mathbb R\) \\
		
		\(\overline{\dot q},|\dot q|\)
		& conjugate and modulus of \(\dot q\) \\
		
		\(\dot{\mathbf X},\dot{\mathcal X}\)
		& quaternion-valued matrix and tensor \\
		
		\((\cdot)^T,(\cdot)^H\)
		& transpose and conjugate transpose \\
		
		\(\|\cdot\|_F\)
		& Frobenius norm \\
		
		\(\operatorname{vec}_{\mathbb R}(\dot q)\)
		& real coordinate vector
		\([q_0,q_1,q_2,q_3]^T\) \\
		
		\(\mathbf L(\dot q),\mathbf R(\dot q)\)
		& left/right multiplication matrices of \(\dot q\) \\
		
		\(\pi,S_3\)
		& multiplication order and permutations of
		\(\{1,2,3\}\) \\
		
		\(\Pi_\pi\)
		& ordered product specified by \(\pi\) \\
		
		\(R_\pi\)
		& OS-QCP rank under order \(\pi\) \\
		
		\(\omega_\pi,\boldsymbol\omega\)
		& order weight and weight vector \\
		
		\(\Omega,\mathcal P_\Omega\)
		& observed index set and projection operator \\
		
		\bottomrule
	\end{tabular}
\end{table}

The quaternion algebra \(\Hq\) is a four-dimensional associative algebra over \(\R\) with basis \(\{1,\mathbf{i},\mathbf{j},\mathbf{k}\}\) \cite{hamilton1866elements}. A quaternion \(\dot{q}\in\Hq\) is written as
\begin{equation}
	\dot{q}
	=
	q_0+q_1\mathbf{i}+q_2\mathbf{j}+q_3\mathbf{k},
	\qquad
	q_0,q_1,q_2,q_3\in\R.
\end{equation}
The imaginary units satisfy
\begin{equation}
	\begin{cases}
		\mathbf{i}^2=\mathbf{j}^2=\mathbf{k}^2=\mathbf{i}\mathbf{j}\mathbf{k}=-1,\\
		\mathbf{i}\mathbf{j}=-\mathbf{j}\mathbf{i}=\mathbf{k},\quad
		\mathbf{j}\mathbf{k}=-\mathbf{k}\mathbf{j}=\mathbf{i},\quad
		\mathbf{k}\mathbf{i}=-\mathbf{i}\mathbf{k}=\mathbf{j}.
	\end{cases}
	\label{imagm}
\end{equation}
Thus, it is clear from \eqref{imagm} that quaternion multiplication is noncommutative.

The conjugate and modulus of \(\dot{q}\) are defined by
\begin{equation*}
	\overline{\dot{q}}
	=
	q_0-q_1\mathbf{i}-q_2\mathbf{j}-q_3\mathbf{k}\ \text{and}\  |\dot{q}|
	=
	\sqrt{\dot{q}\overline{\dot{q}}}
	=
	\sqrt{q_0^2+q_1^2+q_2^2+q_3^2}.
\end{equation*}
If \(\dot{q}\neq0\), then its inverse is given by $\dot{q}^{-1} = \frac{\overline{\dot{q}}}{|\dot{q}|^2}$, which satisfies $\dot{q}^{-1}\dot{q} = \dot{q}\dot{q}^{-1} = 1$. 

For the subsequent algorithmic derivations, a quaternion $\dot q=q_0+q_1\mathbf i+q_2\mathbf j+q_3\mathbf k$
is identified with its real coordinate vector
\begin{equation}
	\operatorname{vec}_{\R}(\dot q)
	=
	\begin{bmatrix}
		q_0&q_1&q_2&q_3
	\end{bmatrix}^{T}
	\in\R^4.
	\label{eq:real_coordinate_vector}
\end{equation}
Following the standard real matrix forms of quaternion left and right
multiplication \cite{valverde2018dual}, define
\begin{equation}
	\mathbf L(\dot l)
	=
	\begin{bmatrix}
		l_0 & -l_1 & -l_2 & -l_3\\
		l_1 &  l_0 & -l_3 &  l_2\\
		l_2 &  l_3 &  l_0 & -l_1\\
		l_3 & -l_2 &  l_1 &  l_0
	\end{bmatrix}
	\in\R^{4\times4},
	\label{eq:left_matrix}
\end{equation}
for
\(\dot l=l_0+l_1\mathbf i+l_2\mathbf j+l_3\mathbf k\), and
\begin{equation}
	\mathbf R(\dot r)
	=
	\begin{bmatrix}
		r_0 & -r_1 & -r_2 & -r_3\\
		r_1 &  r_0 &  r_3 & -r_2\\
		r_2 & -r_3 &  r_0 &  r_1\\
		r_3 &  r_2 & -r_1 &  r_0
	\end{bmatrix}
	\in\R^{4\times4},
	\label{eq:right_matrix}
\end{equation}
for
\(\dot r=r_0+r_1\mathbf i+r_2\mathbf j+r_3\mathbf k\).
These matrices satisfy
\begin{align}
	\operatorname{vec}_{\R}(\dot l\dot q)
	&=
	\mathbf L(\dot l)\operatorname{vec}_{\R}(\dot q),
	\label{leftm}\\
	\operatorname{vec}_{\R}(\dot q\dot r)
	&=
	\mathbf R(\dot r)\operatorname{vec}_{\R}(\dot q).
	\label{rightm}
\end{align}

Combining the two matrix forms gives the following elementary result, which
will be used to formulate the factor-update subproblems as real least-squares
problems.

\begin{lemma}[Real-linearity of two-sided quaternion multiplication]
	\label{lem:real_linearity}
	For fixed \(\dot l,\dot r\in\Hq\), the mapping
	\(\mathcal T_{\dot l,\dot r}:\Hq\to\Hq\),
	defined by
	\(\mathcal T_{\dot l,\dot r}(\dot q)=\dot l\dot q\dot r\),
	is linear over \(\R\). Its real matrix form is
	\begin{equation}
		\operatorname{vec}_{\R}(\dot l\dot q\dot r)
		=
		\mathbf L(\dot l)\mathbf R(\dot r)
		\operatorname{vec}_{\R}(\dot q).
		\label{eq:two_sided_real_representation}
	\end{equation}
\end{lemma}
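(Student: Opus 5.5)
The plan is to get real-linearity from the bilinearity of quaternion multiplication over \(\R\), and then read off the matrix form by composing the two one-sided representations \eqref{leftm} and \eqref{rightm} in the correct order.

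\textbf{Real-linearity.} Let \(\dot q_1,\dot q_2\in\Hq\) and \(\alpha,\beta\in\R\). Since \(\Hq\) is an associative algebra over \(\R\), multiplication is distributive and real scalars are central, so
\[
\dot l(\alpha\dot q_1+\beta\dot q_2)\dot r=\alpha\,\dot l\dot q_1\dot r+\beta\,\dot l\dot q_2\dot r .
\]
Hence \(\mathcal T_{\dot l,\dot r}\) is \(\R\)-linear. Associativity also makes \(\dot l\dot q\dot r\) unambiguous, so we may write \(\mathcal T_{\dot l,\dot r}=\mathcal L_{\dot l}\circ\mathcal R_{\dot r}\), where \(\mathcal L_{\dot l}(\dot p)=\dot l\dot p\) and \(\mathcal R_{\dot r}(\dot q)=\dot q\dot r\).

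\textbf{Matrix form.} Group the product as \(\dot l(\dot q\dot r)\). Apply \eqref{leftm} with \(\dot p=\dot q\dot r\) in place of \(\dot q\), which gives \(\operatorname{vec}_{\R}(\dot l\dot p)=\mathbf L(\dot l)\operatorname{vec}_{\R}(\dot p)\). Then substitute \eqref{rightm}, \(\operatorname{vec}_{\R}(\dot q\dot r)=\mathbf R(\dot r)\operatorname{vec}_{\R}(\dot q)\). Together these yield \eqref{eq:two_sided_real_representation}.

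\textbf{Consistency check.} Grouping the other way, \((\dot l\dot q)\dot r\), gives \(\mathbf R(\dot r)\mathbf L(\dot l)\operatorname{vec}_{\R}(\dot q)\). By associativity this must agree with the first grouping for every \(\dot q\), so \(\mathbf L(\dot l)\mathbf R(\dot r)=\mathbf R(\dot r)\mathbf L(\dot l)\). This matches the known fact that left and right multiplications commute, and it can also be checked directly from \eqref{eq:left_matrix} and \eqref{eq:right_matrix}.

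The only real obstacle is confirming that \eqref{leftm} and \eqref{rightm} hold with the stated sign patterns. I would verify them entrywise on the basis \(1,\mathbf i,\mathbf j,\mathbf k\) using \eqref{imagm}. By bilinearity it suffices to check the columns, since the \(m\)-th column of \(\mathbf L(\dot l)\) must equal \(\operatorname{vec}_{\R}(\dot l e_m)\), where \(e_m\) runs over the basis. For example, \(\dot l\mathbf i=-l_1+l_0\mathbf i+l_3\mathbf j-l_2\mathbf k\), which matches the second column \([-l_1,l_0,l_3,-l_2]^T\). The analogous column checks for \(\mathbf R\) complete the proof.
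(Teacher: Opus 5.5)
Your proposal is correct and matches the paper's proof: group the product as \(\dot l(\dot q\dot r)\), apply \eqref{rightm} and then \eqref{leftm}, and obtain \(\mathbf L(\dot l)\mathbf R(\dot r)\operatorname{vec}_{\R}(\dot q)\). The differences are minor: the paper infers \(\R\)-linearity from the fact that \(\mathbf L(\dot l)\mathbf R(\dot r)\) is a real matrix independent of \(\dot q\), whereas you use distributivity, and the paper takes \eqref{leftm}--\eqref{rightm} as given instead of checking them column by column as you do.
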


\begin{proof}
	By \eqref{rightm},
	\(\operatorname{vec}_{\R}(\dot q\dot r)
	=\mathbf R(\dot r)\operatorname{vec}_{\R}(\dot q)\).
	Applying \eqref{leftm} to \(\dot q\dot r\) gives
	\begin{equation*}
		\operatorname{vec}_{\R}(\dot l\dot q\dot r)
		=
		\mathbf L(\dot l)
		\operatorname{vec}_{\R}(\dot q\dot r)
		=
		\mathbf L(\dot l)\mathbf R(\dot r)
		\operatorname{vec}_{\R}(\dot q),
	\end{equation*}
	which proves \eqref{eq:two_sided_real_representation}.
	Since \(\mathbf L(\dot l)\mathbf R(\dot r)\) is a real
	matrix independent of \(\dot q\), the mapping
	\(\mathcal T_{\dot l,\dot r}\) is linear over \(\R\).
\end{proof}

A quaternion tensor is a tensor whose entries belong to the quaternion
skew field \(\Hq\)~\cite{miao2020low}. A third-order quaternion tensor is
denoted by
\[
\dot{\mathcal X}
=
(\dot{x}_{ijk})
\in
\Hq^{I_1\times I_2\times I_3},
\]
where
\(\dot{x}_{ijk}\in\Hq\) for
\(i=1,\ldots,I_1\),
\(j=1,\ldots,I_2\), and
\(k=1,\ldots,I_3\).
The Frobenius norm of a quaternion tensor is defined as
\[
\|\dot{\mathcal X}\|_F^2
=
\sum_{i,j,k}
|\dot{x}_{ijk}|^2 .
\]
Unlike real and complex tensors, quaternion tensors involve
noncommutative multiplication of their entries, which leads to
additional considerations in tensor decomposition models. 

For further details on quaternion algebra, matrices, and tensors,
see \cite{zhang1997quaternions,rodman2014topics,miao2024quaternion,miao2020low}.

\section{Order-Specified Quaternion CP Decomposition}
\label{sec:osqcp}
This section develops OS-QCP for third-order quaternion tensors.
Each component involves three quaternion factor entries, which
admit six multiplication orders. Since their products may depend
on the order, it must be specified as part of the decomposition.

\subsection{OS-QCP Decomposition and Order-Specific Rank}

Let \(S_3\) denote the set of all permutations of \(\{1,2,3\}\). For a
prescribed order \(\pi\in S_3\), let
\(\Pi_{\pi}(\dot a,\dot b,\dot c)\) denote the quaternion product of
\(\dot a\), \(\dot b\), and \(\dot c\) arranged according to \(\pi\). For example,
\(\Pi_{213}(\dot a,\dot b,\dot c)=\dot b\dot a\dot c\).

\begin{definition}[Order-specified quaternion outer product]
	\label{defrankone}
	For a prescribed order \(\pi\in S_3\), consider three quaternion
	vectors
	\(\dot{\mathbf a}\in\Hq^{I_1}\),
	\(\dot{\mathbf b}\in\Hq^{I_2}\), and
	\(\dot{\mathbf c}\in\Hq^{I_3}\).
	Their order-specified quaternion outer product is the tensor
	\begin{equation}
		\dot{\mathcal T}^{(\pi)}
		=
		\circ_{\Hq,\pi}
		\left(
		\dot{\mathbf a},
		\dot{\mathbf b},
		\dot{\mathbf c}
		\right)
		\in
		\Hq^{I_1\times I_2\times I_3},
		\label{eq:os_outer_product}
	\end{equation}
	where
	\(\circ_{\Hq,\pi}(\cdot,\cdot,\cdot)\)
	denotes the three-factor quaternion outer product under the
	prescribed multiplication order \(\pi\).
	Specifically, its entries are defined by
	\begin{equation}
		\dot t_{ijk}^{(\pi)}
		=
		\Pi_{\pi}
		\left(
		\dot a_i,
		\dot b_j,
		\dot c_k
		\right).
		\label{eq:os_rank_one_entry}
	\end{equation}
	If nonzero, \(\dot{\mathcal T}^{(\pi)}\) is called an
	order-\(\pi\) quaternion rank-one tensor.
\end{definition}

These order-specified outer products serve as the components of the
following OS-QCP decomposition.

\begin{definition}[Order-specified quaternion CP decomposition]
	\label{def:os_qcp}
	Consider a third-order quaternion tensor
	\(\dot{\mathcal X}\in\Hq^{I_1\times I_2\times I_3}\) and a prescribed
	multiplication order \(\pi\in S_3\). Its order-specific quaternion
	CP rank, denoted by $
	R_\pi
	=
	\operatorname{rank}_{\Hq,\pi}(\dot{\mathcal X})$,
	is the smallest nonnegative integer for which there exist
	quaternion vectors
	\(\dot{\mathbf a}_r^{(\pi)}\in\Hq^{I_1}\),
	\(\dot{\mathbf b}_r^{(\pi)}\in\Hq^{I_2}\), and
	\(\dot{\mathbf c}_r^{(\pi)}\in\Hq^{I_3}\)
	satisfying
	\begin{equation}
		\dot{\mathcal X}
		=
		\sum_{r=1}^{R_\pi}
		\circ_{\Hq,\pi}
		\left(
		\dot{\mathbf a}_{r}^{(\pi)},
		\dot{\mathbf b}_{r}^{(\pi)},
		\dot{\mathbf c}_{r}^{(\pi)}
		\right).
		\label{eq:order_specific_qcp_rank}
	\end{equation}
	A representation attaining this minimum is called an
	order-\(\pi\) OS-QCP decomposition. In entry-wise form,
	\begin{equation}
		\dot{x}_{ijk}
		=
		\sum_{r=1}^{R_\pi}
		\Pi_{\pi}\left(
		\dot a_{ir}^{(\pi)},
		\dot b_{jr}^{(\pi)},
		\dot c_{kr}^{(\pi)}
		\right).
		\label{eq:general_osqcp}
	\end{equation}
	The corresponding factor matrices are $
	\dot{\mathbf A}^{(\pi)}
	=
	[\dot a_{ir}^{(\pi)}]
	\in\Hq^{I_1\times R_\pi},
	\dot{\mathbf B}^{(\pi)}
	=
	[\dot b_{jr}^{(\pi)}]
	\in\Hq^{I_2\times R_\pi}$, and
	$\dot{\mathbf C}^{(\pi)}
	=
	[\dot c_{kr}^{(\pi)}]
	\in\Hq^{I_3\times R_\pi}$,
	whose \(r\)th columns are
	\(\dot{\mathbf a}_{r}^{(\pi)}\),
	\(\dot{\mathbf b}_{r}^{(\pi)}\), and
	\(\dot{\mathbf c}_{r}^{(\pi)}\), respectively.
	The zero tensor has rank zero, with the empty sum interpreted as zero.
\end{definition}

For any prescribed order \(\pi\in S_3\), a simple exact representation
of \(\dot{\mathcal X}\) is obtained by summing the outer products
\(\circ_{\Hq,\pi}
(\mathbf e_i\dot x_{ijk},\mathbf e_j,\mathbf e_k)\)
over its nonzero entries.
Here, \(\mathbf e_i\), \(\mathbf e_j\), and \(\mathbf e_k\)
are the standard real coordinate vectors in dimensions
\(I_1\), \(I_2\), and \(I_3\), respectively.
Each outer product equals \(\dot x_{ijk}\) at position \((i,j,k)\)
and zero elsewhere, independently of \(\pi\).
Thus, the admissible component counts form a nonempty set of
nonnegative integers and have a minimum, so \(R_\pi\) is well defined
and satisfies \(0\leq R_\pi\leq I_1I_2I_3\).

\begin{figure}[h]
	\centering
	\begin{tikzpicture}[
		font=\small,
		>=Stealth,
		factor/.style={
			draw,
			rounded corners=2pt,
			align=center,
			minimum width=2.1cm,
			minimum height=0.75cm,
			inner sep=3pt
		},
		operation/.style={
			draw,
			rounded corners=2pt,
			align=center,
			inner sep=5pt
		}
		]
		\node[factor] (a) at (0,0.95)
		{Mode \(1\)\\[1pt]
			\(\dot a_{ir}^{(\pi)}\)};
		
		\node[factor] (b) at (0,0)
		{Mode \(2\)\\[1pt]
			\(\dot b_{jr}^{(\pi)}\)};
		
		\node[factor] (c) at (0,-0.95)
		{Mode \(3\)\\[1pt]
			\(\dot c_{kr}^{(\pi)}\)};
		
		\node[operation] (product) at (4,0)
		{Ordered multiplication\\[2pt]
			\(\Pi_\pi
			\bigl(
			\dot a_{ir}^{(\pi)},
			\dot b_{jr}^{(\pi)},
			\dot c_{kr}^{(\pi)}
			\bigr)\)\\[3pt]
			e.g., \(\pi=213:\quad
			\dot b_{jr}^{(213)}
			\dot a_{ir}^{(213)}
			\dot c_{kr}^{(213)}\)};
		
		\node[operation, anchor=south] (order)
		at ([yshift=0.3cm]product.north)
		{Prescribed order \(\pi\in S_3\)};
		
		\node[operation] (sum) at (7.7,0)
		{Sum over \(r\)\\[2pt]
			\(\displaystyle\sum_{r=1}^{R_\pi}\)};
		
		\node (output) at (9.6,0)
		{\(\dot x_{ijk}\)};
		
		\draw[->] (a.east) -- (product.north west);
		\draw[->] (b.east) -- (product.west);
		\draw[->] (c.east) -- (product.south west);
		\draw[->] (order.south) -- (product.north);
		\draw[->] (product.east) -- (sum.west);
		\draw[->] (sum.east) -- (output.west);
	\end{tikzpicture}
	\caption{Entrywise structure of the OS-QCP decomposition.}
	\label{fig:osqcp_structure}
\end{figure}

\begin{remark}[Relationships with existing CP models]
	Restricting all factor entries to \(\mathbb R\) or a common
	complex subfield of \(\Hq\) gives the corresponding real
	or complex CP representation, with all six orders yielding
	the same products. The associated rank is defined subject
	to the corresponding factor restriction.
	Under order \(123\), restricting \(\dot{\mathbf B}\) to
	be real-valued gives the PQCP form considered in
	\cite{flamant2026multilinear}.
\end{remark}

As illustrated in Figure~\ref{fig:osqcp_structure}, the prescribed
multiplication order determines the positions of the factor entries
within each product, while
\(\dot{\mathbf A}^{(\pi)}\),
\(\dot{\mathbf B}^{(\pi)}\), and
\(\dot{\mathbf C}^{(\pi)}\)
remain associated with the first, second, and third modes, respectively.
The following example shows that changing the order can change
the OS-QCP rank, even when the factors are chosen freely under
each order.

\begin{example}[Order-dependent quaternion CP rank]
	Consider \(\dot{\mathcal X}\in\Hq^{2\times2\times2}\) with
	\[
	\dot{\mathcal X}(:,:,1)
	=
	\begin{bmatrix}
		1 & \mathbf j\\
		\mathbf i & \mathbf k
	\end{bmatrix},
	\qquad
	\dot{\mathcal X}(:,:,2)
	=
	\begin{bmatrix}
		\mathbf k & \mathbf i\\
		-\mathbf j & -1
	\end{bmatrix}.
	\]
	Under order \(\pi=123\), the factors
	\(\dot{\mathbf a}^{(123)}=[1,\mathbf i]^T\),
	\(\dot{\mathbf b}^{(123)}=[1,\mathbf j]^T\), and
	\(\dot{\mathbf c}^{(123)}=[1,\mathbf k]^T\)
	satisfy
	\(\dot x_{ijk}
	=\dot a_i^{(123)}\dot b_j^{(123)}\dot c_k^{(123)}\).
	Since \(\dot{\mathcal X}\neq0\), \(R_{123}=1\).
	
	Suppose that \(\dot{\mathcal X}\) also admitted a rank-one
	representation under order \(\pi=213\), namely
	\(\dot x_{ijk}=\dot e_j\dot d_i\dot f_k\).
	On the first frontal slice, this gives
	\(\dot x_{ij1}=\dot e_j\dot h_i\), where
	\(\dot h_i=\dot d_i\dot f_1\).
	Since \(\dot x_{i11}\neq0\) for \(i=1,2\), both
	\(\dot e_1\) and \(\dot h_i\) are invertible. Thus,
	\[
	\dot x_{i21}\dot x_{i11}^{-1}
	=
	(\dot e_2\dot h_i)(\dot e_1\dot h_i)^{-1}
	=
	\dot e_2\dot h_i\dot h_i^{-1}\dot e_1^{-1}
	=
	\dot e_2\dot e_1^{-1},
	\]
	which is independent of \(i\). However,
	\(\dot x_{121}\dot x_{111}^{-1}=\mathbf j\), whereas
	\(\dot x_{221}\dot x_{211}^{-1}
	=\mathbf k(-\mathbf i)=-\mathbf j\).
	This contradiction rules out a rank-one representation under
	order \(\pi=213\).
	
	On the other hand, the factor matrices
	\[
	\dot{\mathbf A}^{(213)}
	=
	\begin{bmatrix}
		-\mathbf i & 0\\
		0 & -\mathbf i
	\end{bmatrix},
	\qquad
	\dot{\mathbf B}^{(213)}
	=
	\begin{bmatrix}
		\mathbf i & -1\\
		-\mathbf k & \mathbf j
	\end{bmatrix},
	\qquad
	\dot{\mathbf C}^{(213)}
	=
	\begin{bmatrix}
		1 & 1\\
		\mathbf k & \mathbf k
	\end{bmatrix}
	\]
	satisfy
	\(\dot x_{ijk}
	=\sum_{r=1}^{2}
	\dot b_{jr}^{(213)}\dot a_{ir}^{(213)}\dot c_{kr}^{(213)}\).
	Consequently, \(R_{123}=1\) and \(R_{213}=2\).
\end{example}

\begin{remark}[Reversed orders for pure quaternion tensors]
	\label{rem:reverse_order}
	For a pure quaternion tensor \(\dot{\mathcal X}\),
	which satisfies
	\(\overline{\dot{\mathcal X}}=-\dot{\mathcal X}\),
	reversed multiplication orders admit a direct
	correspondence.
	Let \(\pi^{\mathrm{rev}}\) denote the reverse of
	\(\pi\).
	If factors
	\(\dot{\mathbf A},\dot{\mathbf B},\dot{\mathbf C}\)
	generate an approximation
	\(\widehat{\dot{\mathcal X}}\) under order \(\pi\),
	then the factors
	\(-\overline{\dot{\mathbf A}},
	\overline{\dot{\mathbf B}},
	\overline{\dot{\mathbf C}}\)
	under order \(\pi^{\mathrm{rev}}\) generate
	\(-\overline{\widehat{\dot{\mathcal X}}}\), since
	\[
	\Pi_{\pi^{\mathrm{rev}}}
	\left(
	-\overline{\dot a},
	\overline{\dot b},
	\overline{\dot c}
	\right)
	=
	-\overline{
		\Pi_\pi(\dot a,\dot b,\dot c)
	}.
	\]
	This transformation is reversible and preserves
	the number of components and all factor Frobenius
	norms. Moreover,
	\[
	\left\|
	\dot{\mathcal X}
	+\overline{\widehat{\dot{\mathcal X}}}
	\right\|_F
	=
	\left\|
	\dot{\mathcal X}
	-\widehat{\dot{\mathcal X}}
	\right\|_F.
	\]
	Thus, the pairs
	\(123\leftrightarrow321\),
	\(132\leftrightarrow231\), and
	\(213\leftrightarrow312\)
	have equal order-specific ranks and equivalent
	least-squares approximation problems.
\end{remark}

\subsection{Essential Uniqueness and Order Dependence}

We first study uniqueness under a prescribed multiplication order
and distinguish it from equivalence between decompositions under
different orders. We then show that an essentially unique CP
decomposition of the complex representation does not eliminate
order dependence in OS-QCP.

Fix a multiplication order \(\pi\in S_3\).
Applying a common column permutation to
\(\dot{\mathbf A}^{(\pi)}\),
\(\dot{\mathbf B}^{(\pi)}\), and
\(\dot{\mathbf C}^{(\pi)}\)
only reorders the summands and leaves the represented tensor unchanged. Each component also admits a quaternion scaling indeterminacy.
For nonzero quaternions \(\dot u_r,\dot v_r\), multiply the first
factor column in the prescribed product on the right by \(\dot u_r\),
the middle factor column on the left by \(\dot u_r^{-1}\) and on
the right by \(\dot v_r\), and the last factor column on the left
by \(\dot v_r^{-1}\).
These transformations preserve each component by associativity:
\((\dot x\dot u_r)
(\dot u_r^{-1}\dot y\dot v_r)
(\dot v_r^{-1}\dot z)=\dot x\dot y\dot z\),
where \(\dot x,\dot y,\dot z\) denote its factor entries
in the prescribed multiplication order.

\begin{definition}[Essential uniqueness]
	\label{def:osqcp_essential_uniqueness}
	An order-\(\pi\) OS-QCP decomposition of
	\(\dot{\mathcal X}\) is essentially unique if every
	order-\(\pi\) decomposition of \(\dot{\mathcal X}\)
	with \(R_\pi\) components differs from it only by a
	common column permutation and the quaternion scaling
	transformations described above.
\end{definition}

A sufficient condition for essential uniqueness follows
from comparing matrix slices obtained by fixing the index
of the middle factor in the prescribed product.
For a fixed order \(\pi\), let \(\dot{\mathbf M}_t^{(\pi)}\)
denote the slice obtained by fixing the index of the middle
factor at \(t\). Its row and column indices are the indices
of the first and last factors in the prescribed product,
respectively.
Write \(\dot d_{tr}^{(\pi)}\) for the \((t,r)\) entry
of the middle factor matrix.
For instance, under order \(\pi=123\),
\(\dot d_{tr}^{(123)}=\dot b_{tr}^{(123)}\) and
\begin{equation}
	\dot{\mathbf M}_t^{(123)}
	=
	[\dot x_{itk}]_{i,k}
	=
	\dot{\mathbf A}^{(123)}
	\operatorname{diag}
	\left(
	\dot b_{t1}^{(123)},\ldots,
	\dot b_{tR_{123}}^{(123)}
	\right)
	\left(\dot{\mathbf C}^{(123)}\right)^T.
	\label{eq:ordered_factor_representation}
\end{equation}

\begin{theorem}[A sufficient condition for essential uniqueness]
	\label{thm:osqcp_essential_uniqueness}
	Consider an OS-QCP decomposition of a nonzero third-order
	quaternion tensor \(\dot{\mathcal X}\) under a prescribed
	multiplication order \(\pi\in S_3\).
	Suppose that there exist slice indices \(t_0,t_1\)
	such that
	\begin{equation}
		\operatorname{rank}_{\Hq}
		\left(\dot{\mathbf M}_{t_0}^{(\pi)}\right)
		=
		R_\pi,
		\label{eq:uniqueness_slice_rank}
	\end{equation}
	and the real numbers
	\begin{equation}
		\operatorname{Re}
		\left(
		\dot d_{t_1r}^{(\pi)}
		\left(\dot d_{t_0r}^{(\pi)}\right)^{-1}
		\right),
		\qquad r=1,\ldots,R_\pi,
		\label{eq:uniqueness_separation}
	\end{equation}
	are pairwise distinct.
	Then the decomposition is essentially unique under
	order \(\pi\).
	Here, \(\operatorname{rank}_{\Hq}\) denotes quaternion
	matrix rank; condition~\eqref{eq:uniqueness_slice_rank}
	ensures that all \(\dot d_{t_0r}^{(\pi)}\) are nonzero.
\end{theorem}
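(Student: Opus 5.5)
Throughout, write the ordered product under $\pi$ as $\dot x\,\dot y\,\dot z$. Here $\dot x$ is the entry of the factor in the first position of $\pi$, $\dot y=\dot d_{tr}^{(\pi)}$ is the entry of the middle factor, and $\dot z$ is the entry of the last factor. Let $\dot{\mathbf X}\in\Hq^{P\times R}$ and $\dot{\mathbf Z}\in\Hq^{Q\times R}$ be the corresponding factor matrices, with $R=R_\pi$. Exactly as in \eqref{eq:ordered_factor_representation}, every slice then factors as
\[
\dot{\mathbf M}_t^{(\pi)}=\dot{\mathbf X}\,\dot{\mathbf D}_t\,\dot{\mathbf Z}^T,
\qquad \dot{\mathbf D}_t=\operatorname{diag}(\dot d_{t1}^{(\pi)},\ldots,\dot d_{tR}^{(\pi)}).
\]
This is a genuine quaternion matrix product, since its $(i,k)$ entry is $\sum_r \dot x_{ir}\dot d_{tr}\dot z_{kr}$. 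The plan is a Jennrich-type (simultaneous diagonalization) argument adapted to the noncommutative setting, built around right eigenvectors of quaternion matrices.

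\emph{Step 1 (rank consequences).} Since quaternion matrix rank is submultiplicative, $\operatorname{rank}_{\Hq}(\dot{\mathbf M}_{t_0})=R$ forces several things. First, $\dot{\mathbf X}$ has full column rank $R$. Second, $\dot{\mathbf Z}^T$ has full row rank $R$. Third, every $\dot d_{t_0r}$ is nonzero. Hence $\dot{\mathbf X}$ has a left inverse $\dot{\mathbf X}^{\dagger}$ and $\dot{\mathbf Z}^T$ has a right inverse $\dot{\mathbf W}$. Moreover, the column space $\mathcal V$ of $\dot{\mathbf M}_{t_0}$, taken as a right $\Hq$-module, equals the right span of the columns $\dot{\mathbf x}_r$. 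So $\dot{\mathbf M}_{t_0}$ restricts to a bijection $\Hq^{R}\supset\operatorname{range}(\dot{\mathbf M}_{t_0}^{\ast})\to\mathcal V$ in the usual way. Define the operator
\[
\mathcal T=\dot{\mathbf M}_{t_1}\dot{\mathbf M}_{t_0}^{+}\big|_{\mathcal V},
\]
using the quaternion Moore--Penrose inverse or any generalized inverse. This operator is determined by the tensor alone, not by the decomposition. Computing it through the factorization gives
\[
\mathcal T=\dot{\mathbf X}\dot{\mathbf D}_{t_1}\dot{\mathbf D}_{t_0}^{-1}\dot{\mathbf X}^{\dagger}\big|_{\mathcal V},
\]
so that $\mathcal T(\dot{\mathbf x}_r\dot s)=\dot{\mathbf x}_r\dot\mu_r\dot s$ with $\dot\mu_r=\dot d_{t_1r}\dot d_{t_0r}^{-1}$. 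Thus each $\dot{\mathbf x}_r$ is a right eigenvector of $\mathcal T$ with right eigenvalue $\dot\mu_r$.

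\emph{Step 2 (eigenvectors determine columns).} The similarity class of a quaternion is determined by its real part and the modulus of its imaginary part. Hypothesis \eqref{eq:uniqueness_separation} therefore says that the $\dot\mu_r$ lie in pairwise distinct similarity classes. Now take any right eigenvector $\dot{\mathbf y}=\sum_r\dot{\mathbf x}_r\dot s_r$ with $\mathcal T\dot{\mathbf y}=\dot{\mathbf y}\dot\lambda$. Linear independence of the $\dot{\mathbf x}_r$ gives $\dot\mu_r\dot s_r=\dot s_r\dot\lambda$ for each $r$. Any nonzero $\dot s_r$ makes $\dot\lambda$ similar to $\dot\mu_r$, so at most one $\dot s_r$ is nonzero. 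Hence the right eigenvectors of $\mathcal T$ are exactly the nonzero vectors $\dot{\mathbf x}_r\dot s$.

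Now take a second order-$\pi$ decomposition with $R$ components and factors $\dot{\mathbf X}',\dot{\mathbf D}_t',\dot{\mathbf Z}'$. Step 1 applies verbatim to it, because its $t_0$ slice is the same matrix and so has rank $R$. Its columns $\dot{\mathbf x}'_r$ are therefore eigenvectors of the same operator $\mathcal T$, which yields $\dot{\mathbf x}'_{r}=\dot{\mathbf x}_{\sigma(r)}\dot u_r$ for a permutation $\sigma$ and nonzero quaternions $\dot u_r$. Note that $\sigma$ is injective because the $\dot{\mathbf x}'_r$ are independent.

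\emph{Step 3 (recover the other factors).} After relabeling so that $\sigma=\mathrm{id}$, multiply $\dot{\mathbf X}\dot{\mathbf D}_t\dot{\mathbf Z}^T=\dot{\mathbf X}\dot{\mathbf U}\dot{\mathbf D}'_t\dot{\mathbf Z}'^T$ on the left by $\dot{\mathbf X}^\dagger$. This gives, row by row,
\[
\dot d_{tr}\dot{\mathbf z}_r^T=\dot u_r\dot d'_{tr}\dot{\mathbf z}_r'^T .
\]
At $t=t_0$, set $\dot v_r=(\dot u_r\dot d'_{t_0r})^{-1}\dot d_{t_0r}$, so that $\dot{\mathbf z}'^T_r=\dot v_r\dot{\mathbf z}_r^T$. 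Substituting this back for general $t$, and using $\dot{\mathbf z}_r\neq0$, gives $\dot d_{tr}=\dot u_r\dot d'_{tr}\dot v_r$. Inverting these relations reproduces exactly the scaling transformation $(\dot x\dot u^{-1}, \dot u\dot y\dot v, \dot v^{-1}\dot z)$ described before Definition~\ref{def:osqcp_essential_uniqueness}, with the roles of the two decompositions swapped. This proves essential uniqueness.

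The main obstacle is Step 1/2: making the noncommutative eigen-argument rigorous. Three points need care. One must check that $\mathcal T$ is well defined independently of the chosen factorization, which is the reason for working on $\mathcal V$ with a generalized inverse rather than with $\dot{\mathbf X}^\dagger$. One must also justify the rank facts for quaternion matrices. Finally, one must argue via similarity classes rather than eigenvalues, since right eigenvalues are only defined up to similarity. I would first try to avoid generalized inverses altogether by noting that $\mathcal V$ is intrinsic and that $\dot{\mathbf M}_{t_0}\dot{\mathbf w}\mapsto\dot{\mathbf M}_{t_1}\dot{\mathbf w}$ is well defined on $\mathcal V$. This holds because $\ker\dot{\mathbf M}_{t_0}=\ker\dot{\mathbf Z}^T\subseteq\ker\dot{\mathbf M}_{t_1}$, which follows from the full column rank of $\dot{\mathbf X}\dot{\mathbf D}_{t_0}$.
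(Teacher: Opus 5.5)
Your proposal is correct and follows essentially the route the paper indicates. It compares two slices $\dot{\mathbf M}_{t_0}^{(\pi)}$ and $\dot{\mathbf M}_{t_1}^{(\pi)}$, in a Jennrich-type argument, to force the first-position factor columns to agree up to permutation and right quaternion scaling, and then recovers the middle and last factors from the slice identities as in your Step~3. Two minor points: your similarity-class argument only needs the ratios $\dot d_{t_1r}^{(\pi)}(\dot d_{t_0r}^{(\pi)})^{-1}$ to be pairwise non-similar, which distinct real parts guarantee; and $\operatorname{range}(\dot{\mathbf M}_{t_0}^{\ast})$ lies in $\Hq^{Q}$ rather than $\Hq^{R}$, which does not affect the argument.
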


The proof of Theorem~\ref{thm:osqcp_essential_uniqueness}
is provided in Section~S1 of the supplementary materials. Essential uniqueness concerns decompositions under a fixed order.
It does not imply that decompositions under different orders
have equivalent factors. The following example establishes
this distinction even when the ranks are equal and both
decompositions are essentially unique.

\begin{example}[Equal ranks with inequivalent factors]
	\label{ex:equal_rank_inequivalent_orders}
	Set \(\dot p=1+\mathbf j\),
	\(\dot s=(-1+\mathbf j)/\sqrt{2}\), and
	\(\dot t=(\mathbf k-\mathbf i)/\sqrt{2}\).
	Consider \(\dot{\mathcal X}\in\Hq^{2\times2\times2}\) with
	\[
	\dot{\mathcal X}(:,:,1)
	=
	\begin{bmatrix}
		1 & \dot p\\
		\mathbf i & \mathbf i\dot p
	\end{bmatrix},
	\qquad
	\dot{\mathcal X}(:,:,2)
	=
	\begin{bmatrix}
		1 & -\dot p\\
		-\mathbf i & \mathbf i\dot p
	\end{bmatrix}.
	\]
	An order-\(123\) representation is given by
	\[
	\dot{\mathbf A}^{(123)}
	=
	\begin{bmatrix}
		1 & 1\\
		\mathbf i & -\mathbf i
	\end{bmatrix},
	\quad
	\dot{\mathbf B}^{(123)}
	=
	\begin{bmatrix}
		\mathbf i & \mathbf i\\
		\mathbf i-\mathbf k & -\mathbf i+\mathbf k
	\end{bmatrix},
	\quad
	\dot{\mathbf C}^{(123)}
	=
	\begin{bmatrix}
		-\mathbf i & 0\\
		0 & -\mathbf i
	\end{bmatrix}.
	\]
	The displayed representation gives \(R_{123}\leq2\).
	The slice
	\([\dot x_{i1k}]_{i,k}=\dot{\mathbf A}^{(123)}\)
	is invertible and hence has quaternion matrix rank two,
	which implies \(R_{123}\geq2\).
	Therefore, \(R_{123}=2\).
	For \(t_0=1\) and \(t_1=2\), the quantities
	\(\dot b_{21}^{(123)}(\dot b_{11}^{(123)})^{-1}
	=(\mathbf i-\mathbf k)(-\mathbf i)=\dot p\)
	and
	\(\dot b_{22}^{(123)}(\dot b_{12}^{(123)})^{-1}
	=(-\mathbf i+\mathbf k)(-\mathbf i)=-\dot p\)
	have distinct real parts \(1\) and \(-1\).
	Thus, Theorem~\ref{thm:osqcp_essential_uniqueness}
	establishes essential uniqueness under order \(123\).
	
	An order-\(213\) representation is given by
	\[
	\dot{\mathbf A}^{(213)}
	=
	\begin{bmatrix}
		1 & 1\\
		\dot s & -\dot s
	\end{bmatrix},
	\quad
	\dot{\mathbf B}^{(213)}
	=
	\begin{bmatrix}
		1 & 1\\
		-\sqrt{2}\mathbf k & \sqrt{2}\mathbf k
	\end{bmatrix},
	\quad
	\dot{\mathbf C}^{(213)}
	=
	\frac12
	\begin{bmatrix}
		1+\dot t & 1-\dot t\\
		1-\dot t & 1+\dot t
	\end{bmatrix}.
	\]
	This follows from
	\(\dot s\dot t=\mathbf i\),
	\(-\sqrt{2}\mathbf k\dot t=\dot p\), and
	\(-\sqrt{2}\mathbf k\dot s=\mathbf i\dot p\).
	The slice
	\([\dot x_{1jk}]_{j,k}
	=\begin{bmatrix}1&1\\ \dot p&-\dot p\end{bmatrix}\)
	is invertible, so \(R_{213}=2\).
	The middle factor \(\dot{\mathbf A}^{(213)}\) has
	first row \((1,1)\) and second-row entries with
	distinct real parts \(-1/\sqrt{2}\) and \(1/\sqrt{2}\).
	Theorem~\ref{thm:osqcp_essential_uniqueness} therefore
	gives essential uniqueness.
	
	In both orders, quaternion scaling left-multiplies
	each column of the last factor \(\dot{\mathbf C}^{(\pi)}\)
	by a nonzero quaternion, preserving its zero pattern.
	Every column of \(\dot{\mathbf C}^{(123)}\) contains
	one zero, whereas \(\dot{\mathbf C}^{(213)}\) has none.
	Hence, the factor triples cannot be made identical
	by their respective scaling transformations and column
	permutations. Essential uniqueness extends this
	conclusion to every pair of OS-QCP decompositions
	under these two orders.
\end{example}

A further question is whether essential uniqueness of the
CP decomposition of the complex representation eliminates
OS-QCP order dependence. Using the Cayley--Dickson representation, write
\(\dot q=z_1+z_2\mathbf j\), where
\(z_1=q_0+q_1\mathbf i\) and \(z_2=q_2+q_3\mathbf i\)
are complex numbers. The associated complex matrix
representation is \cite{zhang1997quaternions}
\begin{equation}
	\chi(\dot q)
	=
	\begin{bmatrix}
		z_1 & z_2\\
		-\overline{z_2} & \overline{z_1}
	\end{bmatrix}.
	\label{eq:entrywise_complex_representation}
\end{equation}
Applying \(\chi\) entry-wise to
\(\dot{\mathcal X}\in\Hq^{I_1\times I_2\times I_3}\)
yields
\(\mathcal Y\in
\mathbb C^{I_1\times I_2\times I_3\times2\times2}\),
where 
\(\mathcal Y(i,j,k,:,:)=\chi(\dot x_{ijk})\).
This representation determines \(\dot{\mathcal X}\)
uniquely and is independent of the multiplication order
used in its OS-QCP decomposition. The following example shows that essential uniqueness of
the complex CP decomposition of \(\mathcal Y\) does not
eliminate OS-QCP order dependence.
\begin{example}
	\label{ex:unique_complex_cp_order_dependence}
	Consider \(\dot{\mathcal X}\in\Hq^{2\times4\times2}\)
	with frontal slices
	\[
	\dot{\mathcal X}(:,:,1)
	=
	\begin{bmatrix}
		1 & \mathbf i & \mathbf j & \mathbf k\\
		\mathbf i & -1 & \mathbf k & -\mathbf j
	\end{bmatrix},
	\qquad
	\dot{\mathcal X}(:,:,2)
	=
	\begin{bmatrix}
		\mathbf i & -1 & -\mathbf k & \mathbf j\\
		-1 & -\mathbf i & \mathbf j & \mathbf k
	\end{bmatrix}.
	\]
	Its complex representation \(\mathcal Y\) has CP rank
	four and an essentially unique complex CP decomposition,
	whereas \(\dot{\mathcal X}\) has OS-QCP ranks
	\(R_{123}=1\) and \(R_{213}=2\).
	The explicit decompositions and their verification are
	provided in Section~S2 of the supplementary materials.
\end{example}

The underlying reason is that combining complex CP terms
into a quaternion rank-one component requires a product
in the prescribed order, with each factor depending only
on its own mode index.
Such a factorization may exist under one order but not
another.

\subsection{Approximation Properties}

For a fixed order \(\pi\) and a prescribed positive integer \(R\),
an OS-QCP approximation uses factor matrices $
\dot{\mathbf A}^{(\pi)}\in\Hq^{I_1\times R},
\dot{\mathbf B}^{(\pi)}\in\Hq^{I_2\times R}$, and $
\dot{\mathbf C}^{(\pi)}\in\Hq^{I_3\times R}$
to construct
\(\widehat{\dot{\mathcal X}}^{(\pi)}
\in\Hq^{I_1\times I_2\times I_3}\), whose entries are
\begin{equation}
	\widehat{\dot x}_{ijk}^{(\pi)}
	=
	\sum_{r=1}^{R}
	\Pi_{\pi}
	\left(
	\dot a_{ir}^{(\pi)},
	\dot b_{jr}^{(\pi)},
	\dot c_{kr}^{(\pi)}
	\right).
	\label{eq:rankR_osqcp_entry}
\end{equation}
The associated least-squares problem is
\begin{equation}
	\min_{
		\dot{\mathbf A}^{(\pi)},
		\dot{\mathbf B}^{(\pi)},
		\dot{\mathbf C}^{(\pi)}
	}
	\frac{1}{2}
	\left\|
	\dot{\mathcal X}
	-
	\widehat{\dot{\mathcal X}}^{(\pi)}
	\right\|_F^2.
	\label{eq:rankR_osqcp_approximation}
\end{equation}
The reconstructed tensor has order-\(\pi\) rank at most \(R\).

We next establish the existence of a best rank-one approximation
and of solutions to the regularized low-rank approximation problem.

\begin{theorem}[Best OS-QCP rank-one approximation]
	\label{thm:best_rank_one}
	For any nonzero tensor
	\(\dot{\mathcal X}\in\Hq^{I_1\times I_2\times I_3}\)
	and any fixed order \(\pi\in S_3\), the problem
	\begin{equation}
		\min_{
			\dot{\mathbf a}^{(\pi)},
			\dot{\mathbf b}^{(\pi)},
			\dot{\mathbf c}^{(\pi)}
		}
		\left\|
		\dot{\mathcal X}
		-
		\circ_{\Hq,\pi}
		\left(
		\dot{\mathbf a}^{(\pi)},
		\dot{\mathbf b}^{(\pi)},
		\dot{\mathbf c}^{(\pi)}
		\right)
		\right\|_F^2
		\label{eq:best_rank_one_osqcp}
	\end{equation}
	admits a global minimizer.
\end{theorem}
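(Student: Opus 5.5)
The plan is to show that the set of order-\(\pi\) rank-one tensors, together with zero, is closed, and then to minimize the distance from \(\dot{\mathcal X}\) over a compact piece of it.

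For any \(\dot{\mathbf a},\dot{\mathbf b},\dot{\mathbf c}\), the modulus is multiplicative, \(|\dot x\dot y\dot z|=|\dot x||\dot y||\dot z|\), regardless of order. Hence
\[
\bigl\|\circ_{\Hq,\pi}(\dot{\mathbf a},\dot{\mathbf b},\dot{\mathbf c})\bigr\|_F=\|\dot{\mathbf a}\|_2\|\dot{\mathbf b}\|_2\|\dot{\mathbf c}\|_2 .
\]
This identity is what removes the usual degeneracy problem for rank-one terms.

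Consider the set \(\mathcal S_\pi=\{\circ_{\Hq,\pi}(\dot{\mathbf a},\dot{\mathbf b},\dot{\mathbf c})\}\). I would write it as the image of the continuous map
\[
(\lambda,\dot{\mathbf u},\dot{\mathbf v},\dot{\mathbf w})\mapsto \lambda\,\circ_{\Hq,\pi}(\dot{\mathbf u},\dot{\mathbf v},\dot{\mathbf w}),
\]
with \(\lambda\ge0\) and \(\dot{\mathbf u},\dot{\mathbf v},\dot{\mathbf w}\) unit vectors. Every element arises this way: real scalars commute with quaternions, so the norms can be pulled out as one real factor \(\lambda\). The outer product is continuous in the real coordinates because each entry is a polynomial in them, via Lemma~\ref{lem:real_linearity} or directly.

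Next, I would reduce to a compact domain. The choice \(\lambda=0\) gives objective \(\|\dot{\mathcal X}\|_F^2\). Any candidate with \(\lambda>2\|\dot{\mathcal X}\|_F\) gives objective \(\ge(\lambda-\|\dot{\mathcal X}\|_F)^2>\|\dot{\mathcal X}\|_F^2\), using the norm identity and the triangle inequality. So it suffices to minimize over
\[
[0,2\|\dot{\mathcal X}\|_F]\times\mathbb S^{4I_1-1}\times\mathbb S^{4I_2-1}\times\mathbb S^{4I_3-1}.
\]
This is a product of a closed interval and real unit spheres, hence compact. The objective is a continuous real function on it, so Weierstrass gives a minimizer. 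Mapping that minimizer back gives the factors \(\dot{\mathbf a}=\lambda\dot{\mathbf u}\), \(\dot{\mathbf b}=\dot{\mathbf v}\), \(\dot{\mathbf c}=\dot{\mathbf w}\), which globally minimize \eqref{eq:best_rank_one_osqcp}.

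The main obstacle is the coercivity step. In real CP, rank-\(R\) sets fail to be closed for \(R\ge2\), so I must make sure that unbounded factors cannot drive the objective down here. The multiplicative modulus identity handles this: the size of the rank-one tensor equals \(\lambda\) exactly, so no cancellation can occur within a single term. I would verify that identity carefully for each order \(\pi\). Beyond that, the argument is routine.
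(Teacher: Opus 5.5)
Your proposal is correct and follows essentially the route the paper's own remarks point to. Multiplicativity of the quaternion modulus gives \(\|\circ_{\Hq,\pi}(\dot{\mathbf a},\dot{\mathbf b},\dot{\mathbf c})\|_F=\|\dot{\mathbf a}\|_2\|\dot{\mathbf b}\|_2\|\dot{\mathbf c}\|_2\) for every order, which keeps the normalized factors bounded on the relevant sublevel set so that continuity plus compactness yields a minimizer; this is exactly the mechanism the paper notes breaks down for \(R>1\), and the closedness of \(\mathcal S_\pi\) announced in your first sentence is never actually needed, since your compact reparametrization already gives the minimizer directly.
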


The proof of Theorem~\ref{thm:best_rank_one}
is provided in Section~S3 of the supplementary materials.

For \(R>1\), different components can cancel, so boundedness
of their sum does not imply boundedness of the individual
components. The rank-one existence result therefore does
not directly extend to this case.
In classical CP approximation, the set of tensors of rank at most
\(R\) can fail to be closed, and a best approximation may not
exist~\cite{de2008tensor}. This result does not directly establish
nonclosedness for OS-QCP, whose unregularized approximation problem
requires a separate existence analysis.

To guarantee existence, we introduce positive quadratic penalties
on all three factor matrices. For a prescribed order \(\pi\in S_3\)
and component count \(R\), consider
\begin{equation}
	\min_{
		\dot{\mathbf A}^{(\pi)},
		\dot{\mathbf B}^{(\pi)},
		\dot{\mathbf C}^{(\pi)}
	}
	\frac{1}{2}
	\left\|
	\dot{\mathcal X}
	-
	\widehat{\dot{\mathcal X}}^{(\pi)}
	\right\|_F^2
	+
	\frac{\lambda_A}{2}
	\|\dot{\mathbf A}^{(\pi)}\|_F^2
	+
	\frac{\lambda_B}{2}
	\|\dot{\mathbf B}^{(\pi)}\|_F^2
	+
	\frac{\lambda_C}{2}
	\|\dot{\mathbf C}^{(\pi)}\|_F^2,
	\label{eq:regularized_osqcp}
\end{equation}
where \(\widehat{\dot{\mathcal X}}^{(\pi)}\) is defined by
\eqref{eq:rankR_osqcp_entry}, the factor matrices satisfy
\(\dot{\mathbf A}^{(\pi)}\in\Hq^{I_1\times R}\),
\(\dot{\mathbf B}^{(\pi)}\in\Hq^{I_2\times R}\), and
\(\dot{\mathbf C}^{(\pi)}\in\Hq^{I_3\times R}\), and
\(\lambda_A,\lambda_B,\lambda_C>0\).

\begin{theorem}[Existence of a regularized OS-QCP minimizer]
	\label{thm:existence_regularized_osqcp}
	For any tensor
	\(\dot{\mathcal X}\in\Hq^{I_1\times I_2\times I_3}\),
	any prescribed order \(\pi\in S_3\), and any positive integer
	\(R\), problem \eqref{eq:regularized_osqcp} admits at least
	one global minimizer whenever
	\(\lambda_A,\lambda_B,\lambda_C>0\).
\end{theorem}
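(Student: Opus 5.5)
The plan is a direct coercivity argument in the real coordinates. Identify each triple \((\dot{\mathbf A}^{(\pi)},\dot{\mathbf B}^{(\pi)},\dot{\mathbf C}^{(\pi)})\) with a point \(\theta\in\R^{4(I_1+I_2+I_3)R}\) via \(\operatorname{vec}_{\R}\) applied entrywise. Since \(|\dot q|=\|\operatorname{vec}_{\R}(\dot q)\|_2\), the Frobenius norms of the factors equal the Euclidean norm of the corresponding blocks of \(\theta\). Denote by \(f(\theta)\) the objective in \eqref{eq:regularized_osqcp}.

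\textbf{Step 1 (continuity).} Each entry \(\widehat{\dot x}_{ijk}^{(\pi)}\) is a finite sum of ordered products of three quaternions. The real coordinates of such a product are polynomials (cubic forms) in the real coordinates of the factors. This can be seen from \eqref{leftm}--\eqref{rightm}, or from Lemma~\ref{lem:real_linearity}, whose matrices \(\mathbf L(\cdot)\) and \(\mathbf R(\cdot)\) depend linearly on their arguments. Hence the fit term is a polynomial in \(\theta\), and so \(f\) is continuous on the whole space.

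\textbf{Step 2 (coercivity and lower bound).} Set \(\lambda_{\min}=\min\{\lambda_A,\lambda_B,\lambda_C\}>0\). Since the fit term is nonnegative,
\[
f(\theta)\geq \tfrac{\lambda_{\min}}{2}\bigl(\|\dot{\mathbf A}^{(\pi)}\|_F^2+\|\dot{\mathbf B}^{(\pi)}\|_F^2+\|\dot{\mathbf C}^{(\pi)}\|_F^2\bigr)=\tfrac{\lambda_{\min}}{2}\|\theta\|_2^2 .
\]
Therefore \(f\) is bounded below by \(0\) and \(f(\theta)\to\infty\) as \(\|\theta\|_2\to\infty\).

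\textbf{Step 3 (compact sublevel set).} Evaluate at \(\theta=0\) (all factors zero): \(f(0)=\tfrac12\|\dot{\mathcal X}\|_F^2\). The sublevel set
\[
\{\theta: f(\theta)\leq f(0)\}
\]
is nonempty, since it contains \(0\). It is closed, being the preimage of a closed set under the continuous \(f\). It is bounded, since it lies in the ball \(\|\theta\|_2^2\leq \|\dot{\mathcal X}\|_F^2/\lambda_{\min}\). So it is compact.

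By the Weierstrass theorem, \(f\) attains its minimum on this compact set. That minimizer is a global minimizer, because every point outside the set has a larger value than \(f(0)\), and hence than the minimum. This holds for every order \(\pi\) and every \(R\), since neither enters the argument beyond the polynomial structure.

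The only step needing care is Step 1: the noncommutativity of quaternion multiplication must not break continuity. Lemma~\ref{lem:real_linearity} settles this, since it shows each ordered triple product is real-multilinear in its three arguments regardless of \(\pi\). There is no real obstacle beyond this. Positivity of all three \(\lambda\)'s is essential, however. With any \(\lambda=0\), the scaling indeterminacy \((\dot x\dot u)(\dot u^{-1}\dot y\dot v)(\dot v^{-1}\dot z)\) allows unbounded factors with a bounded objective, and coercivity fails. This matches the remark preceding the theorem.
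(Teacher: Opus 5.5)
Your proposal is correct and follows essentially the same route as the paper. Both arguments use continuity of the objective in the real coordinates, then the lower bound of $\frac{\lambda_{\min}}{2}$ times the sum of squared factor norms (from nonnegativity of the fit term) to get coercivity, and then attainment of the minimum in finite dimensions; the only difference is that you make the final step explicit through the compact sublevel set $\{\theta : f(\theta)\leq f(0)\}$, which the paper leaves implicit.
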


\begin{proof}
	Let \(F\) denote the objective in
	\eqref{eq:regularized_osqcp}, which is continuous in the
	real coordinates of the factor matrices.
	Set \(\lambda_{\min}=\min\{\lambda_A,\lambda_B,\lambda_C\}>0\).
	The nonnegative data-fitting term gives the lower bound
	\(F\geq\frac{\lambda_{\min}}{2}
	(\|\dot{\mathbf A}^{(\pi)}\|_F^2
	+\|\dot{\mathbf B}^{(\pi)}\|_F^2
	+\|\dot{\mathbf C}^{(\pi)}\|_F^2)\),
	so \(F\) is coercive.
	Continuity and coercivity in this finite-dimensional
	space ensure that \(F\) attains a global minimum.
\end{proof}

This existence guarantee requires all three regularization
parameters to be positive; it does not imply uniqueness or
convergence of an algorithm to a global minimizer.
The next section develops an ALS algorithm for computing
OS-QCP approximations under the canonical order \(\pi=123\).

\section{ALS Algorithm for Canonical OS-QCP}
\label{sec:als}

This section derives an ALS algorithm for the canonical order
\(\pi=123\), omitting the superscript \((123)\).
Updates for other prescribed orders follow analogously using
the corresponding left and right multiplication matrices.

\subsection{Least-Squares Formulation}

Let \(\dot{\mathcal X}\in\Hq^{I_1\times I_2\times I_3}\)
be nonzero, and let \(R\geq1\) be the prescribed component count.
The canonical OS-QCP approximation is
\begin{equation}
	\widehat{\dot x}_{ijk}
	=
	\sum_{r=1}^{R}
	\dot a_{ir}\dot b_{jr}\dot c_{kr},
	\label{eq:osqcp_reconstruction_canonical}
\end{equation}
where
\(\dot{\mathbf A}=[\dot a_{ir}]\in\Hq^{I_1\times R}\),
\(\dot{\mathbf B}=[\dot b_{jr}]\in\Hq^{I_2\times R}\), and
\(\dot{\mathbf C}=[\dot c_{kr}]\in\Hq^{I_3\times R}\).

With \(\lambda_A,\lambda_B,\lambda_C>0\), we solve
\eqref{eq:regularized_osqcp} under the canonical order \(\pi=123\).
The factors are updated sequentially, with the other two factors
held fixed during each update.
Optional proximal terms are included in the block subproblems;
setting their parameters to zero gives standard regularized ALS.

\subsection{Alternating Least-Squares Updates}

We first update \(\dot{\mathbf A}\) with
\(\dot{\mathbf B}\) and \(\dot{\mathbf C}\) fixed.
Set \(\dot q^A_{jkr}=\dot b_{jr}\dot c_{kr}\).
Then
\(\widehat{\dot x}_{ijk}
=\sum_{r=1}^{R}\dot a_{ir}\dot q^A_{jkr}\),
and the real matrix form of right quaternion multiplication gives
\begin{equation*}
	\operatorname{vec}_{\R}(\widehat{\dot x}_{ijk})
	=
	\sum_{r=1}^{R}
	\mathbf R(\dot q^A_{jkr})
	\operatorname{vec}_{\R}(\dot a_{ir}).
\end{equation*}
Define $
\mathbf D_A(j,k)
=
\begin{bmatrix}
	\mathbf R(\dot q^A_{jk1}) &
	\mathbf R(\dot q^A_{jk2}) &
	\cdots &
	\mathbf R(\dot q^A_{jkR})
\end{bmatrix}
\in\R^{4\times4R}$,
and let
\[
\mathbf z_i^A
=
\begin{bmatrix}
	\operatorname{vec}_{\R}(\dot a_{i1})^T &
	\cdots &
	\operatorname{vec}_{\R}(\dot a_{iR})^T
\end{bmatrix}^{T}
\in\R^{4R}.
\]
Thus,
\(\operatorname{vec}_{\R}(\widehat{\dot x}_{ijk})
=\mathbf D_A(j,k)\mathbf z_i^A\).

Vertically stack \(\mathbf D_A(j,k)\) over all pairs \((j,k)\),
with \(k\) varying fastest, to obtain
\(\mathbf D_A\in\R^{4I_2I_3\times4R}\).
For each fixed \(i\), stack
\(\operatorname{vec}_{\R}(\dot x_{ijk})\) in the same order
to form \(\mathbf y_i^A\in\R^{4I_2I_3}\).
Collect these vectors as $\mathbf Y_A
=
[\mathbf y_1^A,\ldots,\mathbf y_{I_1}^A]
\in\R^{4I_2I_3\times I_1}$ and $
\mathbf Z_A
=
[\mathbf z_1^A,\ldots,\mathbf z_{I_1}^A]
\in\R^{4R\times I_1}$.
The corresponding stacked reconstruction is
\(\mathbf D_A\mathbf Z_A\).
Since stacking real coordinates preserves the Frobenius norm,
the update is obtained from
\begin{equation}
	\min_{\mathbf Z_A}
	\frac{1}{2}
	\left\|
	\mathbf Y_A-\mathbf D_A\mathbf Z_A
	\right\|_F^2
	+
	\frac{\lambda_A}{2}
	\|\mathbf Z_A\|_F^2
	+
	\frac{\eta_A}{2}
	\left\|
	\mathbf Z_A-\mathbf Z_A^{\mathrm{old}}
	\right\|_F^2,
	\label{eq:A_subproblem}
\end{equation}
where \(\eta_A\geq0\) is a proximal parameter and
\(\mathbf Z_A^{\mathrm{old}}\) is the value before the
current block update.
The unique minimizer is
\begin{equation}
	\mathbf Z_A^{\mathrm{new}}
	=
	\left(
	\mathbf D_A^T\mathbf D_A
	+
	(\lambda_A+\eta_A)\mathbf I_{4R}
	\right)^{-1}
	\left(
	\mathbf D_A^T\mathbf Y_A
	+
	\eta_A\mathbf Z_A^{\mathrm{old}}
	\right).
	\label{eq:A_update_full}
\end{equation}

For the update of \(\dot{\mathbf B}\), the unknown entry
\(\dot b_{jr}\) appears between
\(\dot a_{ir}\) and \(\dot c_{kr}\).
Its real linear operator therefore involves both left and
right multiplication: $
\mathbf D_B(i,k)
=
\begin{bmatrix}
	\mathbf L(\dot a_{i1})\mathbf R(\dot c_{k1}) &
	\cdots &
	\mathbf L(\dot a_{iR})\mathbf R(\dot c_{kR})
\end{bmatrix}
\in\R^{4\times4R}$.
The matrix \(\mathbf D_B\in\R^{4I_1I_3\times4R}\) is formed
by vertically stacking these blocks over \((i,k)\), with \(k\)
varying fastest.
Let \(\mathbf Z_B\in\R^{4R\times I_2}\) have its \(j\)th
column formed by stacking the real coordinates of
\(\dot b_{j1},\ldots,\dot b_{jR}\).
The \(j\)th column of
\(\mathbf Y_B\in\R^{4I_1I_3\times I_2}\) stacks
\(\operatorname{vec}_{\R}(\dot x_{ijk})\) over \((i,k)\)
in the same order.
The resulting reconstruction is \(\mathbf D_B\mathbf Z_B\),
and the analogous regularized subproblem yields
\begin{equation}
	\mathbf Z_B^{\mathrm{new}}
	=
	\left(
	\mathbf D_B^T\mathbf D_B
	+
	(\lambda_B+\eta_B)\mathbf I_{4R}
	\right)^{-1}
	\left(
	\mathbf D_B^T\mathbf Y_B
	+
	\eta_B\mathbf Z_B^{\mathrm{old}}
	\right),
	\label{eq:B_update_full}
\end{equation}
where \(\eta_B\geq0\) and \(\mathbf Z_B^{\mathrm{old}}\)
denotes the value before the current block update.

For the update of \(\dot{\mathbf C}\), the unknown entry
\(\dot c_{kr}\) is left-multiplied by
\(\dot a_{ir}\dot b_{jr}\).
Define $
\mathbf D_C(i,j)
=
\begin{bmatrix}
	\mathbf L(\dot a_{i1}\dot b_{j1}) &
	\cdots &
	\mathbf L(\dot a_{iR}\dot b_{jR})
\end{bmatrix}
\in\R^{4\times4R}$.
The matrix \(\mathbf D_C\in\R^{4I_1I_2\times4R}\) is formed
by vertically stacking these blocks over \((i,j)\), with \(j\)
varying fastest.
Let \(\mathbf Z_C\in\R^{4R\times I_3}\) have its \(k\)th
column formed by stacking the real coordinates of
\(\dot c_{k1},\ldots,\dot c_{kR}\).
The \(k\)th column of
\(\mathbf Y_C\in\R^{4I_1I_2\times I_3}\) stacks
\(\operatorname{vec}_{\R}(\dot x_{ijk})\) over \((i,j)\)
in the same order.
The reconstruction is \(\mathbf D_C\mathbf Z_C\), and
the update is
\begin{equation}
	\mathbf Z_C^{\mathrm{new}}
	=
	\left(
	\mathbf D_C^T\mathbf D_C
	+
	(\lambda_C+\eta_C)\mathbf I_{4R}
	\right)^{-1}
	\left(
	\mathbf D_C^T\mathbf Y_C
	+
	\eta_C\mathbf Z_C^{\mathrm{old}}
	\right),
	\label{eq:C_update_full}
\end{equation}
where \(\eta_C\geq0\) and \(\mathbf Z_C^{\mathrm{old}}\)
denotes the value before the current block update.

Because \(\lambda_A,\lambda_B,\lambda_C>0\), the coefficient
matrices in \eqref{eq:A_update_full}--\eqref{eq:C_update_full}
are symmetric positive definite.
In implementation, the updates are computed by solving these
linear systems rather than explicitly forming the inverses.
Each coefficient matrix is factorized once and used for all
right-hand sides in the corresponding block update.

The quaternion factor matrix is recovered by grouping every four
consecutive real coordinates within each column of
\(\mathbf Z_A\), \(\mathbf Z_B\), or \(\mathbf Z_C\)
into a quaternion entry.
The factors are updated sequentially in the order
\(\dot{\mathbf A}\), \(\dot{\mathbf B}\), and
\(\dot{\mathbf C}\), using the latest available values.
In particular, \(\mathbf D_B\) uses the updated
\(\dot{\mathbf A}\), and \(\mathbf D_C\) uses the updated
\(\dot{\mathbf A}\) and \(\dot{\mathbf B}\).

Let \(F^{(t)}\) denote the value of the regularized objective
in \eqref{eq:regularized_osqcp} after the \(t\)-th full iteration.
The iteration is terminated when
\begin{equation}
	\frac{
		|F^{(t)}-F^{(t-1)}|
	}{
		\max\{1,F^{(t-1)}\}
	}
	<
	\varepsilon,
	\label{eq:osqcp_stopping}
\end{equation}
or when the maximum number of iterations is reached,
where \(\varepsilon>0\) is a prescribed tolerance.
This criterion monitors objective stagnation; it does not
by itself certify stationarity.
The relative reconstruction error
\(e^{(t)}
=\|\dot{\mathcal X}-\widehat{\dot{\mathcal X}}^{(t)}\|_F
/\|\dot{\mathcal X}\|_F\)
can additionally be recorded to assess the approximation quality.

The overall procedure is summarized in Algorithm~\ref{alg:canonical_osqcp}.

\begin{algorithm}[h]
	\caption{ALS for regularized canonical OS-QCP approximation}
	\label{alg:canonical_osqcp}
	\begin{algorithmic}[1]
		\STATE \textbf{Input:} Nonzero quaternion tensor
		\(\dot{\mathcal X}\), component count \(R\geq1\),
		regularization parameters \(\lambda_A,\lambda_B,\lambda_C>0\),
		proximal parameters \(\eta_A,\eta_B,\eta_C\geq0\),
		tolerance \(\varepsilon>0\), and maximum number of
		iterations \(T_{\max}\).
		\STATE Initialize nonzero factor matrices
		\(\dot{\mathbf A}\), \(\dot{\mathbf B}\), and
		\(\dot{\mathbf C}\), compute \(F^{(0)}\), and set \(t=0\).
		\REPEAT
		\STATE Set \(t\gets t+1\).
		\STATE Update \(\dot{\mathbf A}\) by
		\eqref{eq:A_update_full}.
		\STATE Update \(\dot{\mathbf B}\) by
		\eqref{eq:B_update_full}, using the updated
		\(\dot{\mathbf A}\).
		\STATE Update \(\dot{\mathbf C}\) by
		\eqref{eq:C_update_full}, using the updated
		\(\dot{\mathbf A}\) and \(\dot{\mathbf B}\).
		\STATE Compute
		\(\widehat{\dot{\mathcal X}}^{(t)}\) and \(F^{(t)}\).
		\UNTIL{criterion \eqref{eq:osqcp_stopping} is satisfied
			or \(t=T_{\max}\).}
		\STATE \textbf{Output:}
		\(\dot{\mathbf A}\), \(\dot{\mathbf B}\), and
		\(\dot{\mathbf C}\).
	\end{algorithmic}
\end{algorithm}

\subsection{Convergence and Complexity Remarks}

The objective in \eqref{eq:regularized_osqcp} is jointly
nonconvex in the three factor matrices.
Each block update minimizes its regularized least-squares
subproblem, optionally augmented by a proximal term.
The previous block value is feasible and has zero proximal
penalty, so each exact update does not increase the original
regularized objective.
Consequently, the objective values converge because they are
nonincreasing and bounded below by zero.
The positive quadratic penalties also ensure that the factor
iterates remain bounded.
Convergence of the factors and global optimality are not
guaranteed by this argument.

We analyze the computational cost of explicitly forming the
design matrices and solving the normal equations, with one
matrix factorization per block update.
Ignoring constant factors associated with the four real
coordinates of a quaternion, forming
\(\mathbf D_A^T\mathbf D_A\) costs \(O(I_2I_3R^2)\),
and computing \(\mathbf D_A^T\mathbf Y_A\) costs
\(O(I_1I_2I_3R)\).
Factorizing the coefficient matrix and solving for all \(I_1\)
right-hand sides cost \(O(R^3+I_1R^2)\).
Thus, updating \(\dot{\mathbf A}\) costs
\(O(I_2I_3R^2+I_1I_2I_3R+R^3+I_1R^2)\).
The costs for \(\dot{\mathbf B}\) and \(\dot{\mathbf C}\)
follow by permuting the mode dimensions.
One full ALS iteration therefore costs
\[
O\!\left(
I_1I_2I_3R
+
(I_1I_2+I_1I_3+I_2I_3)R^2
+
R^3
\right).
\]
When \(R\) is much smaller than all three tensor dimensions,
the dominant cost is \(O(I_1I_2I_3R)\).

\section{Order-Ensemble Quaternion CP for Tensor Completion}
\label{sec:completion}

In OS-QCP, the \(r\)th component has entries
\(\Pi_\pi(\dot a_{ir},\dot b_{jr},\dot c_{kr})\),
where the multiplication order \(\pi\) is prescribed.
The factors are determined from the data, but the
multiplication rule is fixed in advance.
In applications such as tensor completion, however,
there may be no clear reason to favor one order
or to restrict the representation to a single order. A natural question is whether the decomposition model can learn
a combination of multiplication orders adaptively
from the data.

This motivates a new CP-type representation, termed
order-ensemble quaternion CP (OE-QCP).
For a third-order quaternion tensor, OE-QCP also seeks
three quaternion factor matrices.
Whereas OS-QCP determines these factors under a single
prescribed multiplication order, OE-QCP determines them
using a data-adaptively weighted sum of the six ordered
products of their entries within each component.
We first define the OE-QCP decomposition and then
formulate the corresponding tensor completion model.

\subsection{Order-Ensemble Quaternion CP Decomposition}

Let
\(\boldsymbol{\omega}
=(\omega_{123},\allowbreak
\omega_{132},\allowbreak
\omega_{213},\allowbreak
\omega_{231},\allowbreak
\omega_{312},\allowbreak
\omega_{321})^T\)
be the order-weight vector, constrained to the simplex
\[
\Delta_6
=
\left\{
\boldsymbol{\omega}\in\R^6:
\boldsymbol{\omega}\geq0,\quad
\mathbf{1}^T\boldsymbol{\omega}=1
\right\}.
\]

\begin{definition}[Order-ensemble quaternion CP decomposition]
	\label{def:oe_qcp}
	Consider a third-order quaternion tensor
	\(\dot{\mathcal X}\in\Hq^{I_1\times I_2\times I_3}\).
	An OE-QCP decomposition of \(\dot{\mathcal X}\) is defined as
	\begin{equation}
		\dot x_{ijk}
		=
		\sum_{r=1}^{R}
		\sum_{\pi\in S_3}
		\omega_\pi
		\Pi_\pi
		\left(
		\dot a_{ir},
		\dot b_{jr},
		\dot c_{kr}
		\right),
		\qquad
		\boldsymbol{\omega}\in\Delta_6,
		\label{eq:oeqcp_decomposition}
	\end{equation}
	where $
	\dot{\mathbf A}\in\Hq^{I_1\times R},
	\dot{\mathbf B}\in\Hq^{I_2\times R}$, and $
	\dot{\mathbf C}\in\Hq^{I_3\times R}$
	are quaternion-valued factor matrices associated with the
	first, second, and third tensor modes, respectively.
	
	The smallest nonnegative integer \(R\) for which
	\eqref{eq:oeqcp_decomposition} holds, with both the factors
	and \(\boldsymbol{\omega}\in\Delta_6\) allowed to vary,
	is called the OE-QCP rank, denoted by
	\(R_{\mathrm{OE}}
	:=\operatorname{rank}_{\mathrm{OE}}(\dot{\mathcal X})\).
	The zero tensor has OE-QCP rank zero, with the empty
	sum interpreted as zero.
\end{definition}

\begin{remark}[Relationship with OS-QCP and special cases]
	\begin{itemize}
		\item
		OE-QCP defines a new CP-type decomposition whose
		\(r\)th component has entries
		\(\sum_{\pi\in S_3}\omega_\pi
		\Pi_\pi(\dot a_{ir},\dot b_{jr},\dot c_{kr})\).
		The factor matrices are determined under this weighted
		product rule, avoiding the need to prescribe a single
		multiplication order in advance.
		Accordingly, \(R_{\mathrm{OE}}\) is the smallest number
		of such weighted components needed to represent
		\(\dot{\mathcal X}\) exactly.
		\item
		If \(\omega_\pi=1\) for a prescribed order
		\(\pi\in S_3\) and all other weights vanish,
		OE-QCP reduces to the corresponding OS-QCP model. With the weights fixed in this way, the smallest
		number of components required for an exact representation
		equals the corresponding order-specific OS-QCP rank
		\(R_\pi\).
		If all factor entries are real-valued or belong
		to a common complex subfield of \(\Hq\), the six
		ordered products coincide. Since
		\(\sum_{\pi\in S_3}\omega_\pi=1\),
		OE-QCP then reduces to the corresponding real or
		complex CP representation, independently of
		\(\boldsymbol{\omega}\).
		
		\item
		Even for a pure quaternion target tensor, the
		reversal relation in Remark~\ref{rem:reverse_order}
		requires a transformation of the factors and does
		not equate reversed products formed from the same
		factors. It therefore does not justify merging
		reversed terms by adding their weights.
		OE-QCP retains all six orders, with their
		contributions controlled by the order weights.
	\end{itemize}
\end{remark}

%For each component \(r\), the three factor columns generate
%a tensor with entries
%\(\sum_{\pi\in S_3}\omega_\pi
%\Pi_\pi(\dot a_{ir},\dot b_{jr},\dot c_{kr})\).
%Thus, OE-QCP retains a sum-of-components structure while
%replacing each single ordered product by a weighted combination
%of ordered products. An OE-QCP component generally cannot be represented by a single ordered product under any prescribed multiplication order.
%
%\begin{remark}[Special cases of OE-QCP]
%	If \(\omega_\pi=1\) for a prescribed order
%	\(\pi\in S_3\) and all other weights vanish, OE-QCP reduces
%	to the corresponding OS-QCP model. If all factor entries are real-valued, or belong to a common
%	complex subfield of \(\Hq\), the six ordered products coincide.
%	Since the order weights form a simplex,
%	\(\sum_{\pi\in S_3}\omega_\pi=1\),
%	OE-QCP then reduces to the corresponding real or complex CP
%	representation, regardless of the choice of
%	\(\boldsymbol{\omega}\). In addition, note that even for a pure quaternion target tensor, the
%	reversal relation in Remark~\ref{rem:reverse_order} requires
%	a transformation of the factors and does not imply equality
%	of reversed products formed from the same factors.
%	Thus, it does not justify merging reversed terms in OE-QCP
%	by adding their weights.
%	All six orders are therefore retained even for pure quaternion
%	target tensors, with their contributions controlled by
%	the order weights.
%\end{remark}

\begin{proposition}[Existence of OE-QCP representation]
	\label{prop:oeqcp_rank_relation}
	Every tensor
	\(\dot{\mathcal X}\in\Hq^{I_1\times I_2\times I_3}\)
	admits an exact OE-QCP representation with finitely many
	components for any fixed
	\(\boldsymbol{\omega}\in\Delta_6\).
	Moreover,
	\begin{equation}
		0
		\leq
		\operatorname{rank}_{\mathrm{OE}}
		(\dot{\mathcal X})
		\leq
		\min_{\pi\in S_3}
		\operatorname{rank}_{\Hq,\pi}
		(\dot{\mathcal X})
		\leq
		I_1I_2I_3 .
		\label{eq:oeqcp_rank_bound}
	\end{equation}
\end{proposition}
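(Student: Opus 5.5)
The plan is to give, for an arbitrary fixed weight vector, an explicit finite representation built from entrywise components whose ordered products do not depend on the order. Then the rank chain follows by embedding each single-order decomposition into the OE-QCP model.

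For existence with a fixed \(\boldsymbol\omega\in\Delta_6\), I reuse the construction given after Definition~\ref{def:os_qcp}. For each nonzero entry \(\dot x_{ijk}\), take the component with \(\dot{\mathbf a}=\mathbf e_i\dot x_{ijk}\), \(\dot{\mathbf b}=\mathbf e_j\), and \(\dot{\mathbf c}=\mathbf e_k\). The second and third factors are real, and real scalars commute with every quaternion. Hence each ordered product \(\Pi_\pi(\dot a_{i'},b_{j'},c_{k'})\) equals \(\dot a_{i'}b_{j'}c_{k'}\) for every \(\pi\in S_3\). This is \(\dot x_{ijk}\) at position \((i,j,k)\) and zero elsewhere. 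Since \(\sum_\pi\omega_\pi=1\), the weighted sum of the six ordered products reproduces the same single-entry tensor. Summing over the nonzero entries therefore gives an exact representation with at most \(I_1I_2I_3\) components, whatever \(\boldsymbol\omega\) is.

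For the rank bounds, nonnegativity is immediate from the definition. For the middle inequality, fix \(\pi^\star\) attaining \(\min_\pi R_\pi\) and take an order-\(\pi^\star\) OS-QCP decomposition with \(R_{\pi^\star}\) components. Choose \(\boldsymbol\omega\) as the vertex of \(\Delta_6\) with \(\omega_{\pi^\star}=1\) and all other weights zero. Then \eqref{eq:oeqcp_decomposition} coincides with \eqref{eq:general_osqcp}, so this pair of factors and weights is admissible. Because \(R_{\mathrm{OE}}\) minimizes over both factors and weights, we get \(R_{\mathrm{OE}}\le R_{\pi^\star}\). The last inequality \(\min_\pi R_\pi\le I_1I_2I_3\) was already established after Definition~\ref{def:os_qcp}; one can also take any single order and apply the entrywise construction. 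Finally, the zero tensor falls under the empty-sum convention in both definitions.

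There is no real obstacle here. The only point needing care is to state explicitly that the entrywise construction works for every \(\boldsymbol\omega\). This relies on two facts: the real factors make all six products equal, and the weights sum to one, so the weighted combination does not rescale the entry. A single-order argument alone would cover only vertex weights, which is why this point matters.
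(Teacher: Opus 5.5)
Your proposal is correct and follows essentially the same argument as the paper. The first part uses the entrywise construction with one quaternion factor and two real unit-vector factors, which is order-independent because reals commute and weight-independent because $\mathbf 1^T\boldsymbol\omega=1$; the second part embeds an order-$\pi$ OS-QCP decomposition via the vertex weight $\omega_\pi=1$ to get $\operatorname{rank}_{\mathrm{OE}}\le R_\pi$, and your citation of the bound after Definition~\ref{def:os_qcp} for $\min_\pi R_\pi\le I_1I_2I_3$ makes explicit a step the paper leaves implicit.
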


\begin{proof}
	For each nonzero entry \(\dot x_{ijk}\), choose three
	vectors supported only at indices \(i\), \(j\), and \(k\),
	with corresponding entries \(\dot x_{ijk}\), \(1\), and \(1\).
	All six ordered outer products coincide and have
	\(\dot x_{ijk}\) at \((i,j,k)\) and zero elsewhere.
	Since the weights sum to one, their weighted combination
	gives the same tensor.
	Summing over the nonzero entries yields an exact
	OE-QCP representation with at most \(I_1I_2I_3\)
	components for any fixed \(\boldsymbol{\omega}\in\Delta_6\).
	
	Choosing \(\omega_\pi=1\) and all other weights zero
	recovers the order-\(\pi\) OS-QCP representation.
	Thus,
	\(\operatorname{rank}_{\mathrm{OE}}(\dot{\mathcal X})
	\leq\operatorname{rank}_{\Hq,\pi}(\dot{\mathcal X})\)
	for every \(\pi\in S_3\), proving
	\eqref{eq:oeqcp_rank_bound}.
\end{proof}

For subsequent approximation and completion problems, the component
count \(R\) is prescribed as an estimate of the unknown OE-QCP rank.
The factor matrices and order weights are jointly estimated according
to
\begin{equation}
	\widehat{\dot{x}}_{ijk}^{\mathrm{OE}}
	=
	\sum_{\pi\in S_3}
	\omega_\pi
	\sum_{r=1}^{R}
	\Pi_\pi
	(
	\dot a_{ir},
	\dot b_{jr},
	\dot c_{kr}
	).
	\label{eq:oeqcp_model}
\end{equation}
Figure~\ref{fig:oeqcp_framework} illustrates the entrywise structure
of the OE-QCP representation.

\begin{figure}[h]
	\centering
	\resizebox{\linewidth}{!}{%
		\begin{tikzpicture}[
			font=\small,
			>=Stealth,
			line width=0.8pt,
			box/.style={
				draw=black,
				fill=white,
				rounded corners=3pt,
				align=center,
				inner sep=7pt
			},
			product/.style={
				box,
				minimum width=3.35cm,
				minimum height=0.68cm
			},
			flow/.style={
				->,
				draw=black,
				line width=0.8pt
			}
			]
			
			\node[box, minimum width=2.8cm] (shared) at (0,0)
			{Factor entries\\[7pt]
				\begin{tabular}{@{}c@{\qquad}c@{}}
					Mode \(1\) & \(\dot a_{ir}\)\\[7pt]
					Mode \(2\) & \(\dot b_{jr}\)\\[7pt]
					Mode \(3\) & \(\dot c_{kr}\)
			\end{tabular}};
			
			\node[product] (p123) at (4.5,2.25)
			{\(\Pi_{123}:\;
				\dot a_{ir}\dot b_{jr}\dot c_{kr}\)};
			
			\node[product] (p132) at (4.5,1.35)
			{\(\Pi_{132}:\;
				\dot a_{ir}\dot c_{kr}\dot b_{jr}\)};
			
			\node[product] (p213) at (4.5,0.45)
			{\(\Pi_{213}:\;
				\dot b_{jr}\dot a_{ir}\dot c_{kr}\)};
			
			\node[product] (p231) at (4.5,-0.45)
			{\(\Pi_{231}:\;
				\dot b_{jr}\dot c_{kr}\dot a_{ir}\)};
			
			\node[product] (p312) at (4.5,-1.35)
			{\(\Pi_{312}:\;
				\dot c_{kr}\dot a_{ir}\dot b_{jr}\)};
			
			\node[product] (p321) at (4.5,-2.25)
			{\(\Pi_{321}:\;
				\dot c_{kr}\dot b_{jr}\dot a_{ir}\)};
			
			\node[
			draw=black,
			fill=white,
			circle,
			minimum size=1.15cm,
			inner sep=2pt
			] (weighted) at (8.5,0)
			{\(\displaystyle\sum\)};
			
			\node[align=center] at (8.5,-1)
			{Weighted sum\\over orders};
			
			\coordinate (weightbase) at
			(weighted.center |- p123.north);
			\node[box, anchor=south]
			(weights) at ([yshift=-0.2cm]weightbase)
			{Order weights \(\boldsymbol{\omega}\)\\[4pt]
				\(\displaystyle
				\omega_\pi\geq0,\qquad
				\sum_{\pi\in S_3}\omega_\pi=1
				\)};
			
			\node[box, minimum width=1.65cm] (sumr) at (11,0)
			{Sum over \(r\)\\[4pt]
				\(\displaystyle\sum_{r=1}^{R}\)};
			
			\node (output) at (13,0)
			{\(\widehat{\dot x}_{ijk}^{\mathrm{OE}}\)};
			
			\draw[flow]
			(shared.east) to[out=0,in=180] (p123.west);
			\draw[flow]
			(shared.east) to[out=0,in=180] (p132.west);
			\draw[flow]
			(shared.east) to[out=0,in=180] (p213.west);
			\draw[flow]
			(shared.east) to[out=0,in=180] (p231.west);
			\draw[flow]
			(shared.east) to[out=0,in=180] (p312.west);
			\draw[flow]
			(shared.east) to[out=0,in=180] (p321.west);
			
			\draw[flow]
			(p123.east) to[out=0,in=150]
			node[pos=0.42,above,fill=white,inner sep=1pt]
			{\(\omega_{123}\)} (weighted.150);
			
			\draw[flow]
			(p132.east) to[out=0,in=165]
			node[pos=0.42,above,fill=white,inner sep=1pt]
			{\(\omega_{132}\)} (weighted.165);
			
			\draw[flow]
			(p213.east) to[out=0,in=175]
			node[pos=0.42,above,fill=white,inner sep=1pt]
			{\(\omega_{213}\)} (weighted.175);
			
			\draw[flow]
			(p231.east) to[out=0,in=185]
			node[pos=0.42,below,fill=white,inner sep=1pt]
			{\(\omega_{231}\)} (weighted.185);
			
			\draw[flow]
			(p312.east) to[out=0,in=195]
			node[pos=0.42,below,fill=white,inner sep=1pt]
			{\(\omega_{312}\)} (weighted.195);
			
			\draw[flow]
			(p321.east) to[out=0,in=210]
			node[pos=0.42,below,fill=white,inner sep=1pt]
			{\(\omega_{321}\)} (weighted.210);
			
			\draw[flow] (weights.south) -- (weighted.north);
			\draw[flow] (weighted.east) -- (sumr.west);
			\draw[flow] (sumr.east) -- (output.west);
			
		\end{tikzpicture}%
	}
	\caption{Entrywise structure of the OE-QCP representation.
		Each component combines the six ordered products of
		three quaternion factor entries using the order weights
		\(\boldsymbol{\omega}\).
	}
	\label{fig:oeqcp_framework}
\end{figure}

For fixed weights, the OE-QCP representation is linear over
\(\mathbb R\) in each factor matrix when the other two are fixed.
For fixed factor matrices, it is linear in the order-weight vector.
These properties enable real least-squares updates for the factor
matrices and a simplex-constrained least-squares update for the
weights. The completion model below exploits this structure.

\subsection{Quaternion Tensor Completion Model}

Let \(\dot{\mathcal Y}\in\Hq^{I_1\times I_2\times I_3}\)
be a partially observed quaternion tensor.
Let \(\Omega\) denote the observed index set and
\(\Omega^c\) its complement, corresponding to the missing entries.
The sampling operator \(\mathcal P_\Omega\) retains entries
indexed by \(\Omega\) and sets all other entries to zero.
For color video completion, each RGB pixel is encoded as
a pure quaternion. The three color channels are fitted
jointly through the quaternion-valued OE-QCP representation.

For a prescribed component count \(R\), the completion
problem is formulated as
\begin{equation}
	\begin{aligned}
		\min_{\dot{\mathbf{A}},\dot{\mathbf{B}},
			\dot{\mathbf{C}},\boldsymbol{\omega}}
		\
		&
		\frac{1}{2}
		\left\|
		\mathcal{P}_{\Omega}
		\left(
		\widehat{\dot{\mathcal{X}}}^{\mathrm{OE}}
		-
		\dot{\mathcal{Y}}
		\right)
		\right\|_F^2
		\!\!+
		\frac{\lambda_A}{2}\|\dot{\mathbf{A}}\|_F^2
		+
		\frac{\lambda_B}{2}\|\dot{\mathbf{B}}\|_F^2
		+
		\frac{\lambda_C}{2}\|\dot{\mathbf{C}}\|_F^2
		+
		\frac{\lambda_{\omega}}{2}
		\|\boldsymbol{\omega}\|_2^2,
		\\
		\mathrm{s.t.}
		\quad
		&
		\boldsymbol{\omega}\geq0,
		\qquad
		\mathbf{1}^T\boldsymbol{\omega}=1.
	\end{aligned}
	\label{eq:oeqcp_completion_model}
\end{equation}
Here,
\(\widehat{\dot{\mathcal{X}}}^{\mathrm{OE}}\)
is defined by \eqref{eq:oeqcp_model}, and
\(\lambda_A,\lambda_B,\lambda_C,\lambda_{\omega}>0\).
The first term fits the observed entries, while the
remaining terms regularize the factors and weights.
On the simplex, the weight regularizer favors more
balanced weights rather than sparse order selection.

\begin{proposition}[Existence of a regularized OE-QCP approximation]
	\label{prop:oeqcp_existence}
	For any observed tensor, observation set, and prescribed
	component count \(R\), problem
	\eqref{eq:oeqcp_completion_model} admits a global minimizer.
	The same conclusion holds when
	\(\boldsymbol{\omega}\in\Delta_6\) is fixed.
\end{proposition}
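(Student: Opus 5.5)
The plan is to mirror the argument used for Theorem~\ref{thm:existence_regularized_osqcp}: view the objective in \eqref{eq:oeqcp_completion_model} as a real-valued function on a finite-dimensional real set, and obtain a minimizer from continuity, closedness of the feasible set, and coercivity. Identify each quaternion factor entry with its real coordinate vector $\operatorname{vec}_{\R}(\cdot)$. The decision variables then range over
\[
\mathcal D=\R^{4I_1R}\times\R^{4I_2R}\times\R^{4I_3R}\times\Delta_6 .
\]
This is a closed subset of a Euclidean space, because $\Delta_6$ is closed (indeed compact). Denote the objective by $G(\dot{\mathbf A},\dot{\mathbf B},\dot{\mathbf C},\boldsymbol\omega)$.

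\textbf{Step 1 (continuity).} Each entry $\widehat{\dot x}^{\mathrm{OE}}_{ijk}$ in \eqref{eq:oeqcp_model} is a finite sum of terms $\omega_\pi\Pi_\pi(\dot a_{ir},\dot b_{jr},\dot c_{kr})$. By the real matrix forms \eqref{leftm}--\eqref{rightm}, or Lemma~\ref{lem:real_linearity}, the real coordinates of each such term are polynomial in the real coordinates of the factors and in $\boldsymbol\omega$. Hence $\mathcal P_\Omega(\widehat{\dot{\mathcal X}}^{\mathrm{OE}}-\dot{\mathcal Y})$ depends polynomially on the variables, and its squared Frobenius norm is continuous. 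The four quadratic penalties are also continuous, so $G$ is continuous on $\mathcal D$.

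\textbf{Step 2 (coercivity).} The data-fitting term and the term $\frac{\lambda_\omega}{2}\|\boldsymbol\omega\|_2^2$ are both nonnegative. Setting $\lambda_{\min}=\min\{\lambda_A,\lambda_B,\lambda_C\}>0$ gives
\[
G\ \ge\ \frac{\lambda_{\min}}{2}\bigl(\|\dot{\mathbf A}\|_F^2+\|\dot{\mathbf B}\|_F^2+\|\dot{\mathbf C}\|_F^2\bigr).
\]
Since $\boldsymbol\omega$ already lies in the bounded set $\Delta_6$, $G\to\infty$ whenever $(\dot{\mathbf A},\dot{\mathbf B},\dot{\mathbf C},\boldsymbol\omega)\to\infty$ within $\mathcal D$.

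\textbf{Step 3 (compact sublevel set).} Fix any feasible point, for example zero factors with $\boldsymbol\omega=\frac16\mathbf 1$, and let $G_0$ be its objective value. The sublevel set $\{G\le G_0\}\cap\mathcal D$ is nonempty. It is closed because $G$ is continuous and $\mathcal D$ is closed. It is bounded because, by Step~2, every point in it satisfies $\|\dot{\mathbf A}\|_F^2+\|\dot{\mathbf B}\|_F^2+\|\dot{\mathbf C}\|_F^2\le 2G_0/\lambda_{\min}$, and $\boldsymbol\omega\in\Delta_6$. So the sublevel set is compact. By the Weierstrass theorem, $G$ attains its minimum on it, and that point is a global minimizer over all of $\mathcal D$.

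For the fixed-weight statement, apply the same argument to $G(\cdot,\cdot,\cdot,\boldsymbol\omega)$ on $\R^{4I_1R}\times\R^{4I_2R}\times\R^{4I_3R}$; the term $\frac{\lambda_\omega}{2}\|\boldsymbol\omega\|_2^2$ is then a constant. There is no real obstacle here. The only point needing care is that boundedness of $\boldsymbol\omega$ comes from the simplex constraint rather than from $\lambda_\omega$, and that the factors are controlled solely by the positive penalties $\lambda_A,\lambda_B,\lambda_C$. The mask $\Omega$ plays no role, because the data term is only used for its nonnegativity.
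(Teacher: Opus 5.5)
Your proposal is correct and follows essentially the same route as the paper. Both arguments use continuity of the objective in the real coordinates, boundedness of sublevel sets from the positive factor penalties together with compactness of $\Delta_6$, and the Weierstrass theorem, and both treat the fixed-weight case in the same way.
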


\begin{proof}
	The objective is continuous in the real coordinates of
	the factors and weights. Its positive quadratic factor
	regularizers imply that every sublevel set has bounded
	factor matrices. The weights belong to the compact set
	\(\Delta_6\). Hence, every nonempty sublevel set is
	closed and bounded in a finite-dimensional real space.
	The objective therefore attains its minimum.
	The argument also applies with fixed weights.
\end{proof}

After optimization, the completed tensor is obtained by
retaining the observed entries and filling the missing
entries with the OE-QCP reconstruction:
\begin{equation}
	\dot{\mathcal{X}}_{\mathrm{rec}}
	=
	\mathcal{P}_{\Omega}(\dot{\mathcal{Y}})
	+
	\mathcal{P}_{\Omega^c}
	\left(
	\widehat{\dot{\mathcal{X}}}^{\mathrm{OE}}
	\right).
	\label{eq:completed_tensor_output}
\end{equation}
For RGB data, the three imaginary components provide
the recovered color channels.

\subsection{Optimization Algorithm}

Problem~\eqref{eq:oeqcp_completion_model} is solved by
alternating updates of the three factor matrices and
the order weights. The factor updates use the real matrix
forms of quaternion left and right multiplication,
as in OS-QCP. The only change is that each design block
combines the six order-specific operators with their weights.

The factor blocks are updated sequentially, using the latest
available values of the other factors. Within each block,
the row subproblems are independent and can be solved
in parallel. Optional proximal terms with fixed parameters
\(\eta_A,\eta_B,\eta_C\geq0\) can be included;
setting them to zero gives the ordinary regularized ALS updates.

For fixed
\(\boldsymbol{\omega}\),
\(\dot{\mathbf B}\), and
\(\dot{\mathbf C}\), define
\begin{equation*}
	\begin{aligned}
		\mathbf{M}_{A,r}(j,k)
		={}&
		\omega_{123}\mathbf{R}(\dot{b}_{jr}\dot{c}_{kr})
		+
		\omega_{132}\mathbf{R}(\dot{c}_{kr}\dot{b}_{jr})
		+
		\omega_{213}\mathbf{L}(\dot{b}_{jr})
		\mathbf{R}(\dot{c}_{kr})
		\\
		&+
		\omega_{231}\mathbf{L}(\dot{b}_{jr}\dot{c}_{kr})
		+
		\omega_{312}\mathbf{L}(\dot{c}_{kr})
		\mathbf{R}(\dot{b}_{jr})
		+
		\omega_{321}\mathbf{L}(\dot{c}_{kr}\dot{b}_{jr}).
	\end{aligned}
\end{equation*}
The design block associated with \((j,k)\) is
\begin{equation}
	\mathbf{D}_A^{\mathrm{OE}}(j,k)
	=
	\begin{bmatrix}
		\mathbf{M}_{A,1}(j,k) &
		\mathbf{M}_{A,2}(j,k) &
		\cdots &
		\mathbf{M}_{A,R}(j,k)
	\end{bmatrix}
	\in
	\R^{4\times 4R}.
	\label{eq:DA_OE}
\end{equation}
For each \(i\), define
\(\Omega_i^A=\{(j,k):(i,j,k)\in\Omega\}\).
Let \(\mathbf z_i^A\in\R^{4R}\) be obtained by stacking
\(\operatorname{vec}_{\R}(\dot a_{i1}),\ldots,
\operatorname{vec}_{\R}(\dot a_{iR})\).
Then
\(\operatorname{vec}_{\R}
(\widehat{\dot x}_{ijk}^{\mathrm{OE}})
=\mathbf D_A^{\mathrm{OE}}(j,k)\mathbf z_i^A\),
and the normal equation for its update is
\begin{equation}
	\mathbf{N}_i^A\mathbf{z}_i^A
	=
	\mathbf{h}_i^A,
	\label{eq:A_normal_equation_OE}
\end{equation}
where
\begin{equation*}
	\begin{aligned}
		\mathbf{N}_i^A
		={}&
		\sum_{(j,k)\in\Omega_i^A}
		\bigl(\mathbf{D}_A^{\mathrm{OE}}(j,k)\bigr)^T
		\mathbf{D}_A^{\mathrm{OE}}(j,k)
		+
		(\lambda_A+\eta_A)\mathbf{I}_{4R},
		\\
		\mathbf{h}_i^A
		={}&
		\sum_{(j,k)\in\Omega_i^A}
		\bigl(\mathbf{D}_A^{\mathrm{OE}}(j,k)\bigr)^T
		\operatorname{vec}_{\R}(\dot{y}_{ijk})
		+
		\eta_A(\mathbf{z}_i^A)^{\mathrm{old}}.
	\end{aligned}
\end{equation*}
Here, \((\mathbf z_i^A)^{\mathrm{old}}\) denotes the row
before the current block update.
With the other variables fixed, all row subproblems for
\(i=1,\ldots,I_1\) are mutually independent.

Similarly, for updating \(\dot{\mathbf B}\), define
\begin{equation*}
	\begin{aligned}
		\mathbf{M}_{B,r}(i,k)
		={}&
		\omega_{123}\mathbf{L}(\dot{a}_{ir})
		\mathbf{R}(\dot{c}_{kr})
		+
		\omega_{132}\mathbf{L}(\dot{a}_{ir}\dot{c}_{kr})
		+
		\omega_{213}\mathbf{R}(\dot{a}_{ir}\dot{c}_{kr})
		\\
		&+
		\omega_{231}\mathbf{R}(\dot{c}_{kr}\dot{a}_{ir})
		+
		\omega_{312}\mathbf{L}(\dot{c}_{kr}\dot{a}_{ir})
		+
		\omega_{321}\mathbf{L}(\dot{c}_{kr})
		\mathbf{R}(\dot{a}_{ir}).
	\end{aligned}
\end{equation*}
The block row
\(\mathbf D_B^{\mathrm{OE}}(i,k)\in\R^{4\times4R}\)
is formed by concatenating
\(\mathbf M_{B,r}(i,k)\) over \(r=1,\ldots,R\).
For each \(j\), define
\(\Omega_j^B=\{(i,k):(i,j,k)\in\Omega\}\), and let
\(\mathbf z_j^B\in\R^{4R}\) stack the real coordinate
vectors of the entries in the \(j\)th row of
\(\dot{\mathbf B}\).
Its update satisfies
\begin{equation}
	\mathbf{N}_j^B\mathbf{z}_j^B
	=
	\mathbf{h}_j^B,
	\label{eq:B_normal_equation_OE}
\end{equation}
where
\begin{equation*}
	\begin{aligned}
		\mathbf{N}_j^B
		={}&
		\sum_{(i,k)\in\Omega_j^B}
		\bigl(\mathbf{D}_B^{\mathrm{OE}}(i,k)\bigr)^T
		\mathbf{D}_B^{\mathrm{OE}}(i,k)
		+
		(\lambda_B+\eta_B)\mathbf{I}_{4R},
		\\
		\mathbf{h}_j^B
		={}&
		\sum_{(i,k)\in\Omega_j^B}
		\bigl(\mathbf{D}_B^{\mathrm{OE}}(i,k)\bigr)^T
		\operatorname{vec}_{\R}(\dot{y}_{ijk})
		+
		\eta_B(\mathbf{z}_j^B)^{\mathrm{old}}.
	\end{aligned}
\end{equation*}
The updates for \(j=1,\ldots,I_2\) are independent
and can be computed in parallel.

For updating \(\dot{\mathbf C}\), define
\begin{equation*}
	\begin{aligned}
		\mathbf{M}_{C,r}(i,j)
		={}&
		\omega_{123}\mathbf{L}(\dot{a}_{ir}\dot{b}_{jr})
		+
		\omega_{132}\mathbf{L}(\dot{a}_{ir})
		\mathbf{R}(\dot{b}_{jr})
		+
		\omega_{213}\mathbf{L}(\dot{b}_{jr}\dot{a}_{ir})
		\\
		&+
		\omega_{231}\mathbf{L}(\dot{b}_{jr})
		\mathbf{R}(\dot{a}_{ir})
		+
		\omega_{312}\mathbf{R}(\dot{a}_{ir}\dot{b}_{jr})
		+
		\omega_{321}\mathbf{R}(\dot{b}_{jr}\dot{a}_{ir}).
	\end{aligned}
\end{equation*}
The block row
\(\mathbf D_C^{\mathrm{OE}}(i,j)\in\R^{4\times4R}\)
is formed by concatenating
\(\mathbf M_{C,r}(i,j)\) over \(r=1,\ldots,R\).
For each \(k\), define
\(\Omega_k^C=\{(i,j):(i,j,k)\in\Omega\}\), and let
\(\mathbf z_k^C\in\R^{4R}\) stack the real coordinate
vectors of the entries in the \(k\)th row of
\(\dot{\mathbf C}\).
Its update is given by
\begin{equation}
	\mathbf{N}_k^C\mathbf{z}_k^C
	=
	\mathbf{h}_k^C,
	\label{eq:C_normal_equation_OE}
\end{equation}
where
\begin{equation*}
	\begin{aligned}
		\mathbf{N}_k^C
		={}&
		\sum_{(i,j)\in\Omega_k^C}
		\bigl(\mathbf{D}_C^{\mathrm{OE}}(i,j)\bigr)^T
		\mathbf{D}_C^{\mathrm{OE}}(i,j)
		+
		(\lambda_C+\eta_C)\mathbf{I}_{4R},
		\\
		\mathbf{h}_k^C
		={}&
		\sum_{(i,j)\in\Omega_k^C}
		\bigl(\mathbf{D}_C^{\mathrm{OE}}(i,j)\bigr)^T
		\operatorname{vec}_{\R}(\dot{y}_{ijk})
		+
		\eta_C(\mathbf{z}_k^C)^{\mathrm{old}}.
	\end{aligned}
\end{equation*}
All row subproblems for \(k=1,\ldots,I_3\) are independent.
Since \(\lambda_A,\lambda_B,\lambda_C>0\),
each normal matrix is positive definite even when
the corresponding row contains no observed entries.

For fixed factor matrices, the weights are updated through
a six-dimensional constrained least-squares problem.
Let \(\operatorname{vec}_{\R,\Omega}\) stack the real
coordinate vectors of only the observed tensor entries,
using a fixed ordering of \(\Omega\).
Define
\(\mathbf Q_\Omega\in\R^{4|\Omega|\times6}\) by
\begin{equation*}
	\mathbf{Q}_{\Omega}
	=
	\begin{bmatrix}
		\operatorname{vec}_{\R,\Omega}
		(\widehat{\dot{\mathcal X}}^{123}) &
		\operatorname{vec}_{\R,\Omega}
		(\widehat{\dot{\mathcal X}}^{132}) &
		\cdots &
		\operatorname{vec}_{\R,\Omega}
		(\widehat{\dot{\mathcal X}}^{321})
	\end{bmatrix},
\end{equation*}
where the order-specific tensors are defined entrywise by
\eqref{eq:rankR_osqcp_entry}.
With
\(\mathbf y_\Omega
=\operatorname{vec}_{\R,\Omega}(\dot{\mathcal Y})\),
the weight subproblem is
\begin{equation}
	\begin{aligned}
		\min_{\boldsymbol{\omega}\in\R^6}
		\quad
		&
		\frac{1}{2}
		\|\mathbf{Q}_{\Omega}\boldsymbol{\omega}
		-\mathbf{y}_{\Omega}\|_2^2
		+
		\frac{\lambda_{\omega}}{2}
		\|\boldsymbol{\omega}\|_2^2,
		\\
		\mathrm{s.t.}
		\quad
		&
		\boldsymbol{\omega}\geq0,
		\qquad
		\mathbf{1}^T\boldsymbol{\omega}=1.
	\end{aligned}
	\label{eq:omega_update}
\end{equation}
At each outer iteration, this subproblem is updated by a
fixed number \(J_\omega\geq1\) of projected-gradient steps,
initialized with the preceding weight vector.
The precise update is given in
\eqref{eq:weight_projected_gradient}.

For practical implementation, the stopping criterion is based on
the relative change of the observed fitting error. At iteration \(t\), define
\begin{equation*}
	e^{(t)}
	=
	\left\|
	\mathcal{P}_{\Omega}
	\left(
	\widehat{\dot{\mathcal{X}}}^{\mathrm{OE},(t)}
	-
	\dot{\mathcal{Y}}
	\right)
	\right\|_F
	\Big/
	\max\left\{
	\left\|
	\mathcal{P}_{\Omega}(\dot{\mathcal{Y}})
	\right\|_F,
	\epsilon_{\mathrm{mach}}
	\right\},
\end{equation*}
where \(\epsilon_{\mathrm{mach}}>0\) prevents division
by zero. The iteration terminates when
\begin{equation}
	\frac{
		|e^{(t)}-e^{(t-1)}|
	}{
		\max\{e^{(t-1)},\epsilon_{\mathrm{mach}}\}
	}
	<
	\varepsilon,
	\label{eq:stopping_criterion}
\end{equation}
or when the prescribed maximum number of iterations is reached.

The overall procedure is summarized in
Algorithm~\ref{alg:oeqcp_completion}.

\begin{algorithm}[h]
	\caption{Parallel row-wise OE-QCP for quaternion tensor completion}
	\label{alg:oeqcp_completion}
	\begin{algorithmic}[1]
		\STATE \textbf{Input:} Incomplete quaternion tensor
		\(\dot{\mathcal Y}\), observed set \(\Omega\),
		component count \(R\), positive parameters
		\(\lambda_A,\lambda_B,\lambda_C,\lambda_\omega\),
		proximal parameters \(\eta_A,\eta_B,\eta_C\geq0\),
		weight-update count \(J_\omega\), tolerance
		\(\varepsilon\), and maximum iteration count \(T_{\max}\).
		
		\STATE Initialize
		\(\dot{\mathbf A}\), \(\dot{\mathbf B}\),
		\(\dot{\mathbf C}\), and
		\(\boldsymbol{\omega}\in\Delta_6\).
		Set \(t=0\) and compute \(e^{(0)}\).
		
		\REPEAT
		\STATE Set \(t\gets t+1\)
		\STATE Precompute
		\(\mathbf D_A^{\mathrm{OE}}(j,k)\)
		using \eqref{eq:DA_OE}.
		
		\STATE Update all rows of \(\dot{\mathbf A}\)
		in parallel by solving
		\eqref{eq:A_normal_equation_OE}.
		
		\STATE Using the updated \(\dot{\mathbf A}\),
		precompute \(\mathbf D_B^{\mathrm{OE}}(i,k)\).
		
		\STATE Update all rows of \(\dot{\mathbf B}\)
		in parallel by solving
		\eqref{eq:B_normal_equation_OE}.
		
		\STATE Using the updated
		\(\dot{\mathbf A}\) and \(\dot{\mathbf B}\),
		precompute \(\mathbf D_C^{\mathrm{OE}}(i,j)\).
		
		\STATE Update all rows of \(\dot{\mathbf C}\)
		in parallel by solving
		\eqref{eq:C_normal_equation_OE}.
		
		\STATE For adaptive weights, construct
		\(\mathbf Q_\Omega\) and perform \(J_\omega\)
		steps of \eqref{eq:weight_projected_gradient};
		otherwise retain the prescribed weights.
		
		\STATE  Compute \(e^{(t)}\).
		
		\UNTIL{\eqref{eq:stopping_criterion} holds
			or \(t=T_{\max}\).}
		
		\STATE \textbf{Output:} Completed tensor
		\(\dot{\mathcal X}_{\mathrm{rec}}\)
		given by \eqref{eq:completed_tensor_output}.
	\end{algorithmic}
\end{algorithm}

\subsection{Convergence Analysis}

Let
\begin{equation*}
	\begin{aligned}
		\Phi(\dot{\mathbf A},\dot{\mathbf B},
		\dot{\mathbf C},\boldsymbol{\omega})
		={}&
		\frac{1}{2}
		\left\|
		\mathcal{P}_{\Omega}
		\left(
		\widehat{\dot{\mathcal X}}^{\mathrm{OE}}
		-
		\dot{\mathcal Y}
		\right)
		\right\|_F^2
		+
		\frac{\lambda_A}{2}
		\|\dot{\mathbf A}\|_F^2
		+
		\frac{\lambda_B}{2}
		\|\dot{\mathbf B}\|_F^2
		\\
		&+
		\frac{\lambda_C}{2}
		\|\dot{\mathbf C}\|_F^2
		+
		\frac{\lambda_{\omega}}{2}
		\|\boldsymbol{\omega}\|_2^2
	\end{aligned}
\end{equation*}
be the objective function of
\eqref{eq:oeqcp_completion_model}.

At outer iteration \(t\), the weight update uses the newly
updated factors
\(\dot{\mathbf A}^{(t+1)}\),
\(\dot{\mathbf B}^{(t+1)}\), and
\(\dot{\mathbf C}^{(t+1)}\).
Denote the resulting observation matrix by
\(\mathbf Q_\Omega^{(t)}\), and define
\begin{equation*}
	f_t(\boldsymbol{\omega})
	=
	\frac{1}{2}
	\left\|
	\mathbf Q_{\Omega}^{(t)}\boldsymbol{\omega}
	-
	\mathbf y_{\Omega}
	\right\|_2^2
	+
	\frac{\lambda_{\omega}}{2}
	\|\boldsymbol{\omega}\|_2^2.
\end{equation*}
Its gradient is
\(\nabla f_t(\boldsymbol{\omega})
=\mathbf G_t\boldsymbol{\omega}-\mathbf h_t\),
where
\(\mathbf G_t
=(\mathbf Q_\Omega^{(t)})^T\mathbf Q_\Omega^{(t)}
+\lambda_\omega\mathbf I_6\)
and
\(\mathbf h_t
=(\mathbf Q_\Omega^{(t)})^T\mathbf y_\Omega\).
The gradient is Lipschitz continuous with constant
\(L_t=\|\mathbf G_t\|_2>0\).

Starting from
\(\boldsymbol{\omega}^{(t,0)}
=\boldsymbol{\omega}^{(t)}\),
perform
\begin{equation}
	\boldsymbol{\omega}^{(t,\ell+1)}
	=
	\Pi_{\Delta_6}
	\left(
	\boldsymbol{\omega}^{(t,\ell)}
	-
	\frac{1}{L_t}
	\nabla f_t\bigl(\boldsymbol{\omega}^{(t,\ell)}\bigr)
	\right),
	\qquad
	\ell=0,\ldots,J_{\omega}-1,
	\label{eq:weight_projected_gradient}
\end{equation}
and set
\(\boldsymbol{\omega}^{(t+1)}
=\boldsymbol{\omega}^{(t,J_\omega)}\).
Here, \(\Pi_{\Delta_6}\) denotes Euclidean projection
onto the simplex.

\begin{theorem}[Convergence of the OE-QCP completion algorithm]
	\label{thm:oeqcp_convergence}
	Suppose that
	\(\lambda_A,\lambda_B,\lambda_C,\lambda_\omega>0\)
	and that \(\eta_A,\eta_B,\eta_C\geq0\) are fixed.
	Assume that the factor subproblems, including their
	proximal terms when present, are solved exactly in the
	order \(\dot{\mathbf A}\), \(\dot{\mathbf B}\),
	and \(\dot{\mathbf C}\), followed by a fixed positive
	number \(J_\omega\) of updates
	\eqref{eq:weight_projected_gradient}.
	
	If the iterations are continued without a finite stopping
	rule, the generated sequence satisfies:
	\begin{enumerate}
		\item[1)]
		The objective sequence \(\{\Phi^{(t)}\}\)
		is nonincreasing and converges to a finite value.
		
		\item[2)]
		The iterate sequence is bounded, and
		\begin{equation}
			\begin{aligned}
				\lim_{t\to\infty}
				\big(
				&\|\dot{\mathbf A}^{(t+1)}
				-\dot{\mathbf A}^{(t)}\|_F
				+
				\|\dot{\mathbf B}^{(t+1)}
				-\dot{\mathbf B}^{(t)}\|_F
				\\
				&+
				\|\dot{\mathbf C}^{(t+1)}
				-\dot{\mathbf C}^{(t)}\|_F
				+
				\|\boldsymbol{\omega}^{(t+1)}
				-\boldsymbol{\omega}^{(t)}\|_2
				\big)
				=0.
			\end{aligned}
			\label{eq:successive_difference}
		\end{equation}
		
		\item[3)]
		The entire sequence converges to a stationary
		point of \eqref{eq:oeqcp_completion_model}.
	\end{enumerate}
\end{theorem}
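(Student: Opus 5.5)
Our plan is the standard block-coordinate descent analysis combined with the Kurdyka--{\L}ojasiewicz (KL) framework of Attouch--Bolte--Svaiter and Bolte--Sabach--Teboulle. Throughout, we work in the real coordinates $\mathbf z=(\mathbf Z_A,\mathbf Z_B,\mathbf Z_C,\boldsymbol\omega)$. We write $\Phi=\Psi+\iota_{\Delta_6}(\boldsymbol\omega)$, where $\Psi$ is a real polynomial in these coordinates: the OE-QCP reconstruction \eqref{eq:oeqcp_model} is polynomial in the real coordinates of the factors and linear in $\boldsymbol\omega$, and the regularizers are quadratic.

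\textbf{Part 1) Sufficient decrease.} We first prove a decrease inequality for each block.

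For the factor blocks, $\Phi$ restricted to the $\mathbf Z_A$-block (others fixed) is a strongly convex quadratic with modulus at least $\lambda_A$, because the data term is convex quadratic. Its exact minimizer $\mathbf Z_A^{+}$, including the proximal term, therefore satisfies
\[
\Phi(\mathbf Z_A^{\mathrm{old}},\cdot)-\Phi(\mathbf Z_A^{+},\cdot)\ge \tfrac{\lambda_A+2\eta_A}{2}\|\mathbf Z_A^{+}-\mathbf Z_A^{\mathrm{old}}\|_F^2 .
\]
The same bound holds for the $\mathbf Z_B$ and $\mathbf Z_C$ blocks.

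For the weight block, $f_t$ is $\lambda_\omega$-strongly convex with $L_t$-Lipschitz gradient. The projected-gradient descent lemma with step $1/L_t$ gives
\[
f_t(\boldsymbol\omega^{(t,\ell)})-f_t(\boldsymbol\omega^{(t,\ell+1)})\ge \tfrac{L_t}{2}\|\boldsymbol\omega^{(t,\ell+1)}-\boldsymbol\omega^{(t,\ell)}\|^2 ,
\]
and $L_t\ge\lambda_\omega$. Summing over $\ell$ and using $\|\sum_\ell d_\ell\|^2\le J_\omega\sum_\ell\|d_\ell\|^2$, we obtain decrease proportional to $\frac{\lambda_\omega}{J_\omega}\|\boldsymbol\omega^{(t+1)}-\boldsymbol\omega^{(t)}\|^2$. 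Note that $f_t$ differs from $\Phi$ only by constants in $\boldsymbol\omega$ once the factors are fixed.

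Combining the four block bounds yields
\[
\Phi^{(t)}-\Phi^{(t+1)}\ge \rho\,\|\mathbf z^{(t+1)}-\mathbf z^{(t)}\|^2, \qquad \rho=\min\Bigl\{\tfrac{\lambda_A}{2},\tfrac{\lambda_B}{2},\tfrac{\lambda_C}{2},\tfrac{\lambda_\omega}{2J_\omega}\Bigr\}.
\]
Since $\Phi\ge0$, monotonicity and convergence of $\{\Phi^{(t)}\}$ follow, which is item 1).

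\textbf{Part 2) Boundedness and vanishing differences.} The iterates lie in the sublevel set $\{\Phi\le\Phi^{(0)}\}$. As in Proposition~\ref{prop:oeqcp_existence}, this set is bounded because the factor regularizers are positive and $\Delta_6$ is compact. Telescoping the decrease inequality gives $\sum_t\|\mathbf z^{(t+1)}-\mathbf z^{(t)}\|^2<\infty$, which implies \eqref{eq:successive_difference} and completes item 2).

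\textbf{Part 3) Relative error bound.} Next we construct a relative-error subgradient bound. From the optimality conditions of each block at iteration $t+1$, we form an element $\mathbf g^{(t+1)}\in\partial\Phi(\mathbf z^{(t+1)})$.

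\begin{itemize}
\item[(a)] For the $A$-block, $\nabla_A\Psi(\mathbf Z_A^{(t+1)},\mathbf Z_B^{(t)},\mathbf Z_C^{(t)},\boldsymbol\omega^{(t)})+\eta_A(\mathbf Z_A^{(t+1)}-\mathbf Z_A^{(t)})=0$. Replacing the lagged blocks with current ones changes $\nabla_A\Psi$ by at most $\mathrm{Lip}\cdot\|\mathbf z^{(t+1)}-\mathbf z^{(t)}\|$, since $\nabla\Psi$ is Lipschitz on the bounded set containing the iterates. The $B$- and $C$-blocks are handled identically.
\item[(b)] For the $\boldsymbol\omega$-block, the last projected-gradient step gives
\[
L_t(\boldsymbol\omega^{(t,J-1)}-\boldsymbol\omega^{(t+1)})-\nabla f_t(\boldsymbol\omega^{(t,J-1)})\in N_{\Delta_6}(\boldsymbol\omega^{(t+1)}).
\]
Here $L_t$ is uniformly bounded on the bounded iterate set.
\end{itemize}

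The inner iterates $\boldsymbol\omega^{(t,\ell)}$ must also be controlled. \emph{This is the step we expect to be the main obstacle}: when $J_\omega>1$, the bound involves $\|\boldsymbol\omega^{(t,J-1)}-\boldsymbol\omega^{(t+1)}\|$ rather than just the outer difference. We would handle it by keeping all inner differences in the decrease inequality, i.e., using the sum of squares over $\ell$ rather than its compressed form. We then run the KL argument on the refined sequence that includes inner weight iterates, so that $\|\mathbf g^{(t+1)}\|\le K\sum_{\ell}\|\boldsymbol\omega^{(t,\ell+1)}-\boldsymbol\omega^{(t,\ell)}\|+K\|\mathbf z_{\mathrm{fac}}^{(t+1)}-\mathbf z_{\mathrm{fac}}^{(t)}\|$. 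A bound of this form suffices for the Bolte--Sabach--Teboulle scheme.

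\textbf{Part 4) KL conclusion.} $\Phi$ is the sum of a polynomial and the indicator of a polyhedron, so it is semialgebraic and satisfies the KL property at every point. Combining sufficient decrease, the relative error bound, continuity of $\Phi$ along the sequence (guaranteed because $\Psi$ is continuous and $\Delta_6$ is closed), and boundedness, the standard theorem gives finite length $\sum_t\|\mathbf z^{(t+1)}-\mathbf z^{(t)}\|<\infty$. Hence the whole sequence converges. Its limit $\mathbf z^\ast$ satisfies $0\in\partial\Phi(\mathbf z^\ast)$ by closedness of the limiting subdifferential, which is precisely stationarity for \eqref{eq:oeqcp_completion_model}. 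This proves item 3).
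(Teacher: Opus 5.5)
Your proposal is correct and follows essentially the same route as the paper's proof. That route consists of blockwise sufficient decrease (from $\lambda$-strong convexity of the exact factor subproblems and the projected-gradient descent lemma for the weights), boundedness from the positive regularizers and compactness of $\Delta_6$, a relative-error subgradient bound, and the Kurdyka--{\L}ojasiewicz property of the semialgebraic objective $\Psi+\iota_{\Delta_6}$, which together give finite length and convergence to a stationary point.

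Your fix for the inner weight steps is also sound. With the surrogate $\delta_t^2=\|\mathbf z_{\mathrm{fac}}^{(t+1)}-\mathbf z_{\mathrm{fac}}^{(t)}\|^2+\sum_{\ell}\|\boldsymbol\omega^{(t,\ell+1)}-\boldsymbol\omega^{(t,\ell)}\|^2$ you get $\Phi^{(t)}-\Phi^{(t+1)}\ge a\,\delta_t^2$, $\operatorname{dist}(0,\partial\Phi(\mathbf z^{(t+1)}))\le b\,\delta_t$, and $\|\mathbf z^{(t+1)}-\mathbf z^{(t)}\|\le c\,\delta_t$, which is all the KL telescoping argument needs.
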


The proof of Theorem~\ref{thm:oeqcp_convergence}
is provided in Section~S4 of the supplementary materials. When the weights are fixed, the same argument applies
after removing the weight-update terms and treating
\(\boldsymbol{\omega}\) as a constant.
These results establish convergence to a stationary
point, not necessarily to a global minimizer.

\subsection{Computational Complexity}

Let \(m=|\Omega|\), and let \(R\) be the prescribed
component count. The main cost of one iteration comes
from the row-wise factor updates. Constants associated
with the four real coordinates of a quaternion and the
six multiplication orders are omitted below.

For updating \(\dot{\mathbf A}\), precomputing
\(\mathbf D_A^{\mathrm{OE}}(j,k)\) for all \((j,k)\)
costs \(O(I_2I_3R)\).
Assembling the row-wise normal equations costs
\(O(mR^2)\), and solving the \(I_1\) systems of size
\(4R\times4R\) costs \(O(I_1R^3)\).
Thus, the total cost of this block update is
\(O(I_2I_3R+mR^2+I_1R^3)\).

Similarly, the costs of updating
\(\dot{\mathbf B}\) and \(\dot{\mathbf C}\) are
\(O(I_1I_3R+mR^2+I_2R^3)\) and
\(O(I_1I_2R+mR^2+I_3R^3)\), respectively.
For a fixed number of inner weight updates, the
overall per-iteration cost is therefore
\begin{equation*}
	O\left(
	(I_1I_2+I_1I_3+I_2I_3)R
	+
	3mR^2
	+
	(I_1+I_2+I_3)R^3
	\right).
\end{equation*}

The rows within each factor block can be processed
in parallel. With \(P\) workers, balanced workloads,
and sufficient rows, the row-wise assembly and
linear solves can ideally be accelerated by a factor
of approximately \(P\).
The corresponding per-iteration cost is
\begin{equation*}
	O\left(
	(I_1I_2+I_1I_3+I_2I_3)R
	+
	\frac{
		3mR^2+(I_1+I_2+I_3)R^3
	}{P}
	\right),
\end{equation*}
excluding communication, scheduling, and load-imbalance
overheads. The three factor blocks themselves remain
sequential.

For adaptive weights, constructing the six observed
order-specific reconstructions costs \(O(mR)\).
Forming the six-dimensional quadratic objective costs
\(O(m)\), and \(J_\omega\) projected-gradient steps
cost \(O(J_\omega)\), with dimension-dependent
constants suppressed.
These costs are absorbed into the preceding estimate
when \(J_\omega\) is fixed.
No weight-update cost is incurred when the weights
are prescribed.

\section{Experiments}
\label{sec: experiments}
We first examine order dependence and compare the approximation
performance of OS-QCP and OE-QCP. We then evaluate OE-QCP against
representative baselines on color video inpainting at different
sampling ratios.

All experiments were conducted in MATLAB R2024b on a computer
with an Intel Core Ultra 7 265K processor and 64 GB of RAM.
Video data were normalized to \([0,1]\).

%\subsection{Experimental Setup}

\subsection{Order Sensitivity of Quaternion CP Decomposition}

Order sensitivity is examined using synthetic tensors with
known generating orders and low-rank approximation of
real color videos.

\subsubsection{Synthetic Tensors}

For \((I,R_0)=(20,3)\) and \((25,4)\), generate
\(\dot{\mathbf A},\dot{\mathbf B},\dot{\mathbf C}
\in\Hq^{I\times R_0}\) using a fixed random seed,
with independent standard normal real components
and unit-norm columns.
Each factor triple generates six tensors of size
\(I\times I\times I\), one for each multiplication order:
\begin{equation}
	\dot x_{ijk}^{(\pi)}
	=
	\sum_{r=1}^{R_0}
	\Pi_\pi
	\left(
	\dot a_{ir},\dot b_{jr},\dot c_{kr}
	\right),
	\qquad \pi\in S_3.
	\label{eq:synthetic_order_generation}
\end{equation}
Each tensor is normalized to unit Frobenius norm.

Each target is fitted by the six fixed-order OS-QCP
models and OE-QCP with the same component count \(R=R_0\).
OE-QCP is initialized with uniform weights
\(\omega_\pi=1/6\), which are updated during optimization.
The relative error (RE) is computed as
\(\mathrm{RE}
=\|\widehat{\dot{\mathcal X}}-\dot{\mathcal X}\|_F/
\|\dot{\mathcal X}\|_F\),
where \(\widehat{\dot{\mathcal X}}\) is the tensor
represented by the fitted factors and, for OE-QCP,
the learned weights.

\begin{figure}[h]
	\centering
	\begin{subfigure}[t]{0.49\linewidth}
		\centering
		\includegraphics[width=\linewidth]{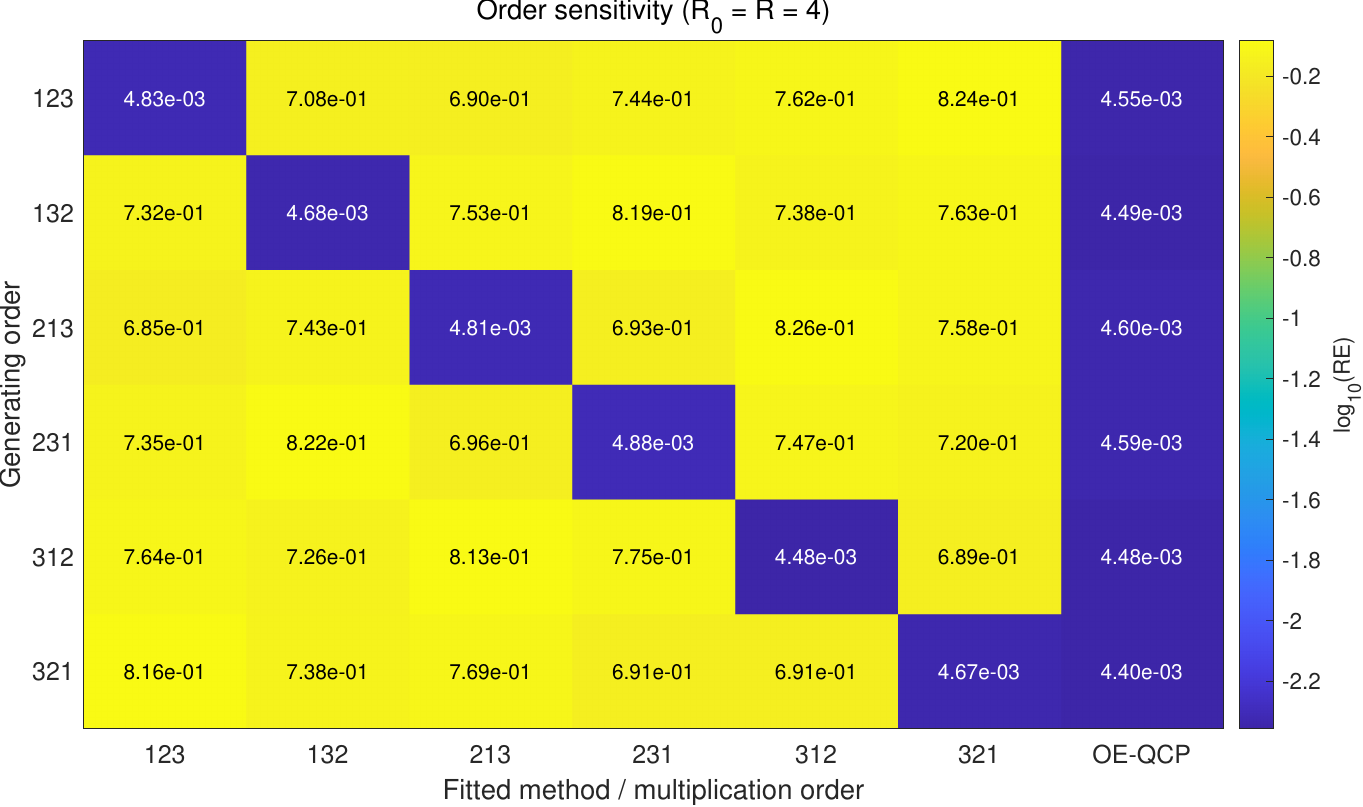}
		\caption{Relative errors: \(I=20\), \(R_0=R=4\).}
		\label{fig:synthetic_re_20}
	\end{subfigure}
	\hfill
	\begin{subfigure}[t]{0.49\linewidth}
		\centering
		\includegraphics[width=\linewidth]{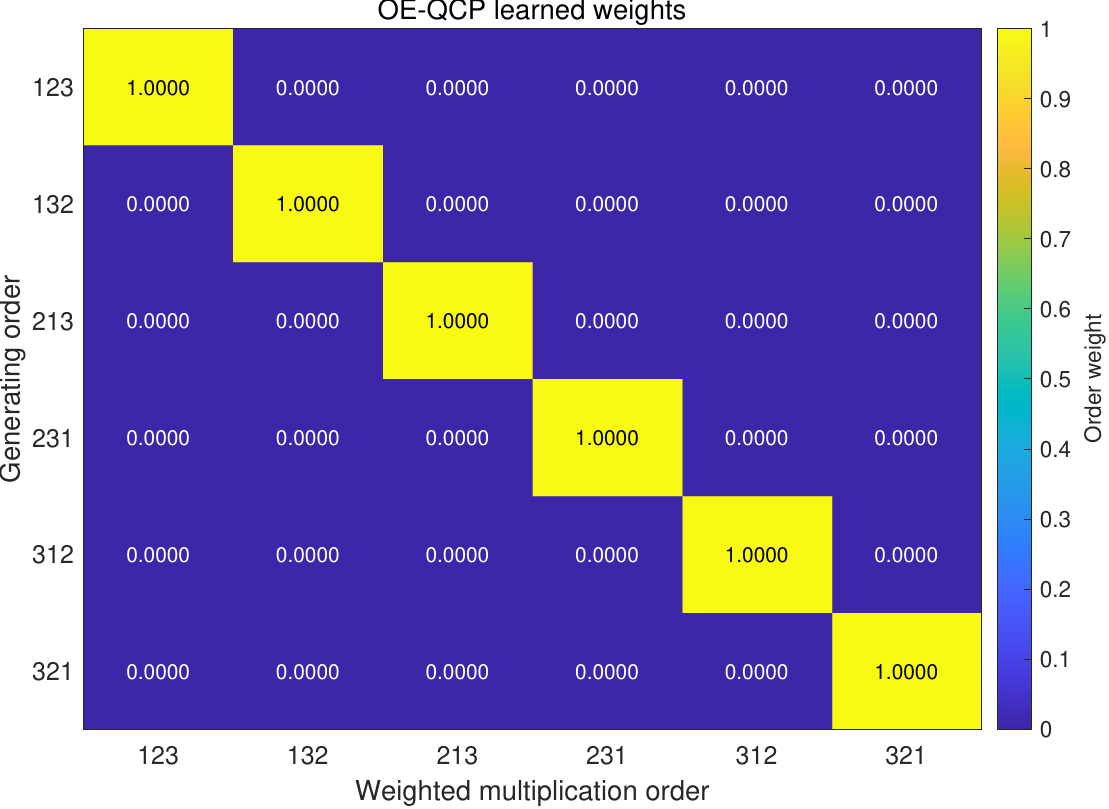}
		\caption{OE-QCP weights: \(I=20\), \(R_0=R=4\).}
		\label{fig:synthetic_weights_20}
	\end{subfigure}
	
	\medskip
	\begin{subfigure}[t]{0.49\linewidth}
		\centering
		\includegraphics[width=\linewidth]{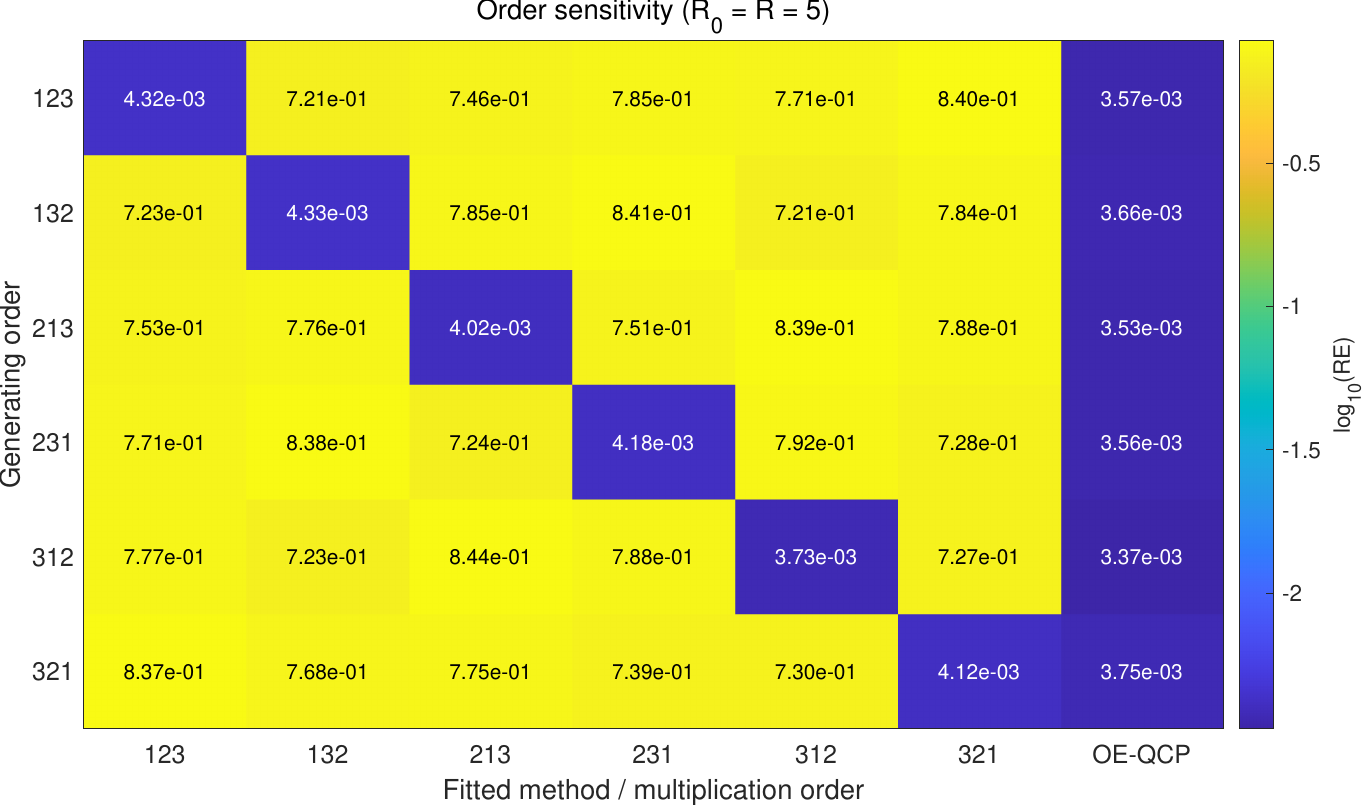}
		\caption{Relative errors: \(I=30\), \(R_0=R=5\).}
		\label{fig:synthetic_re_30}
	\end{subfigure}
	\hfill
	\begin{subfigure}[t]{0.49\linewidth}
		\centering
		\includegraphics[width=\linewidth]{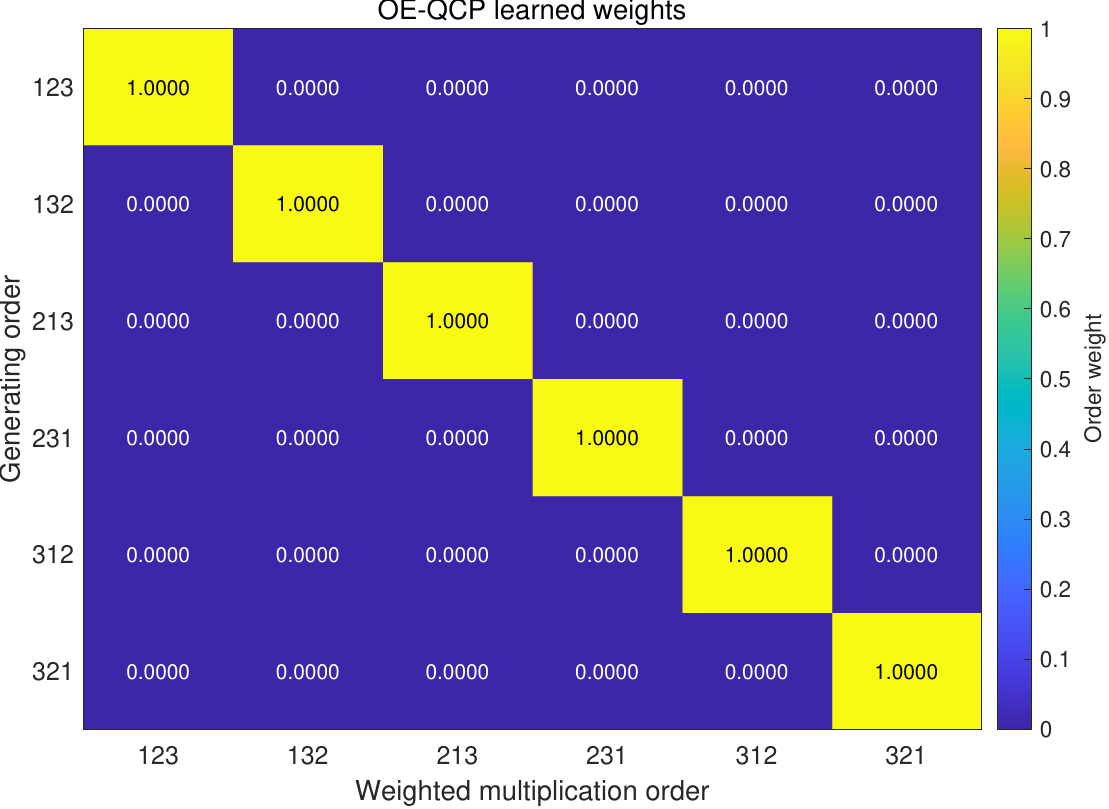}
		\caption{OE-QCP weights: \(I=30\), \(R_0=R=5\).}
		\label{fig:synthetic_weights_30}
	\end{subfigure}
	\caption{Order sensitivity on synthetic tensors of size
		\(I\times I\times I\).
		Rows within each heatmap indicate the generating orders.
		Left: relative errors of the six OS-QCP models and OE-QCP,
		with colors showing \(\log_{10}(\mathrm{RE})\)
		and annotations giving RE.
		Right: order weights learned by OE-QCP from uniform
		initialization.}
	\label{fig:synthetic_order_sensitivity}
\end{figure}

Figure~\ref{fig:synthetic_order_sensitivity} presents
the relative errors and learned weights for the two
settings. In each error heatmap, rows indicate the
generating orders, while columns correspond to the
six OS-QCP fitting orders and OE-QCP.
The accompanying weight heatmap shows the weights
assigned by OE-QCP to the six multiplication orders.

In both settings, OS-QCP achieves substantially lower
errors when the fitting order matches the generating order,
showing that the multiplication order has a substantial
effect on approximation accuracy at a fixed component count.
OE-QCP achieves comparable or slightly lower errors than
the matching-order OS-QCP models, with learned weights
concentrated on the generating orders.
Thus, OE-QCP adapts to the generating order in these
experiments without prior knowledge of that order.

\subsubsection{Low-Rank Approximation of Color Videos}

Experiments are also conducted on the publicly available
RGB video sequences \textit{Hall Monitor} and
\textit{Grandma}, each of size
\(144\times176\times3\times150\)\footnote{Available:
	http://trace.eas.asu.edu/yuv/}.
Each video is represented by a quaternion tensor
\(\dot{\mathcal X}\in\Hq^{I_1\times I_2\times I_3}\),
where \(I_1,I_2\) are the spatial dimensions and \(I_3\)
is the temporal dimension. Each RGB pixel is encoded as a
pure quaternion, with its red, green, and blue intensities
assigned to the three imaginary components.

For these pure quaternion tensors, reversed OS-QCP orders
give equivalent approximation problems by
Remark~\ref{rem:reverse_order}.
We therefore evaluated OS-QCP under orders
\(123\), \(132\), and \(213\), with
\(R\in\{10,\allowbreak 20,\allowbreak 30,\allowbreak
40,\allowbreak 50\}\).
For comparison, we included the conventional real-valued
CP decomposition \cite{harshman1970foundations}, denoted
by R-CP, and the quaternion CP model in
\cite{flamant2026multilinear}, denoted by PQCP.

Approximation performance was evaluated using the peak
signal-to-noise ratio (PSNR), structural similarity index
measure (SSIM), and relative error (RE).
PSNR and SSIM were computed from the reconstructed RGB
videos using the normalized intensity range \([0,1]\).
SSIM was evaluated for each frame and averaged over all
frames. Higher PSNR and SSIM and lower RE indicate better
approximation quality.

\begin{figure*}[htbp]
	\centering
	\begin{tabular}{cc}
		\multicolumn{2}{c}{\textbf{\textit{Hall Monitor}}} \\
		\includegraphics[width=5.5cm]{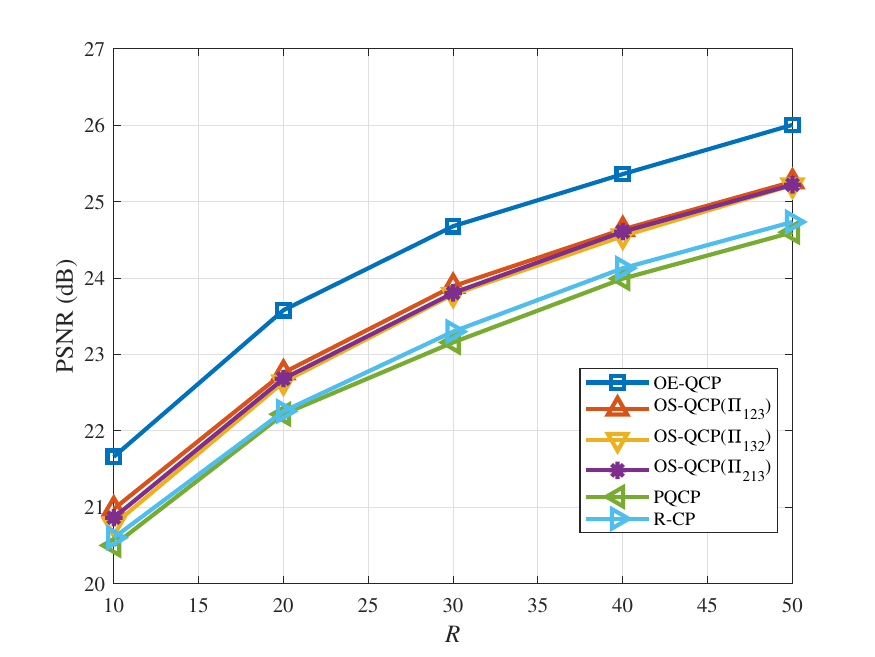} &
		\includegraphics[width=5.5cm]{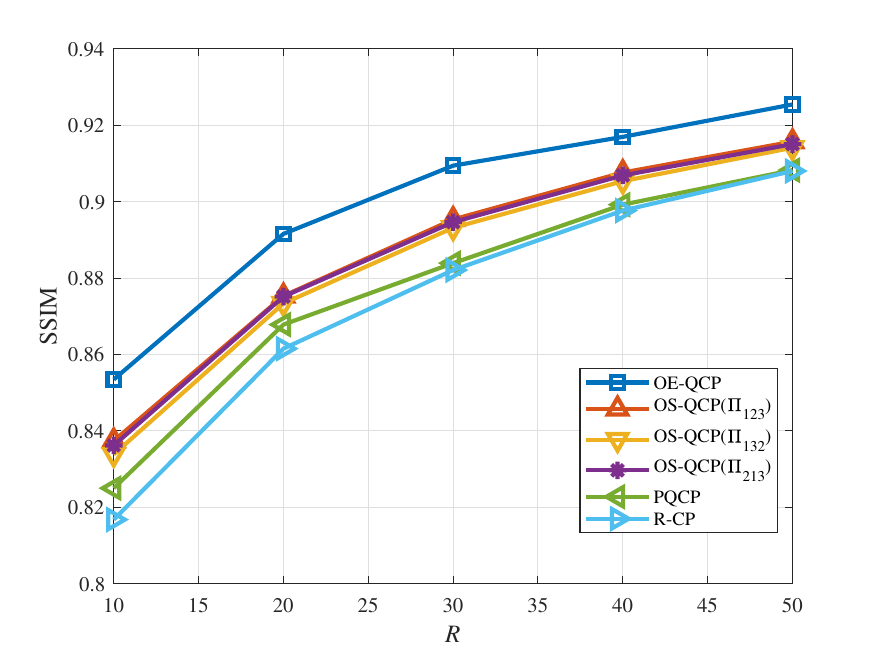} \\
		(a) PSNR & (b) SSIM \\
		[0.3cm]
		\multicolumn{2}{c}{\textbf{\textit{Grandma}}} \\
		\includegraphics[width=5.5cm]{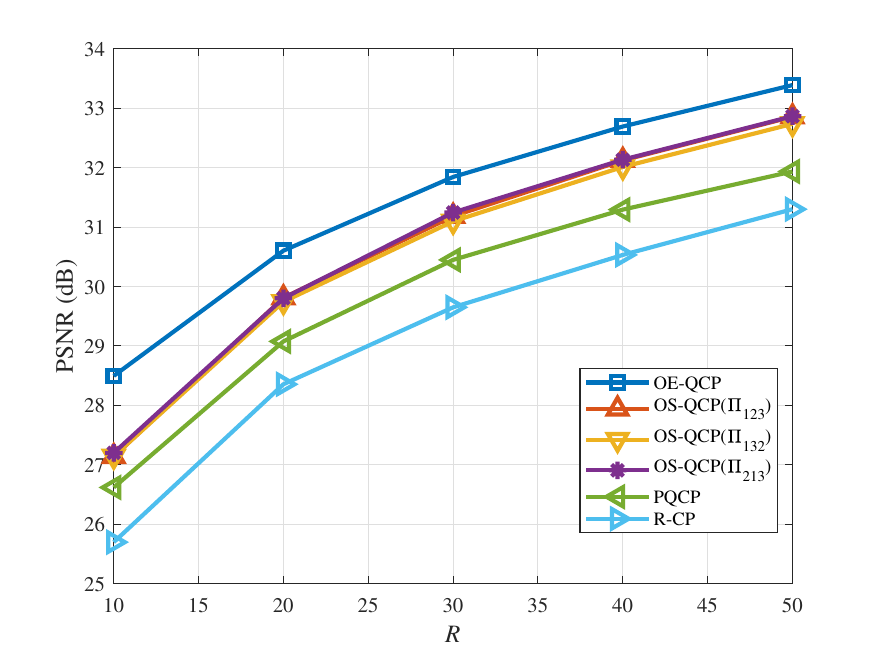} &
		\includegraphics[width=5.5cm]{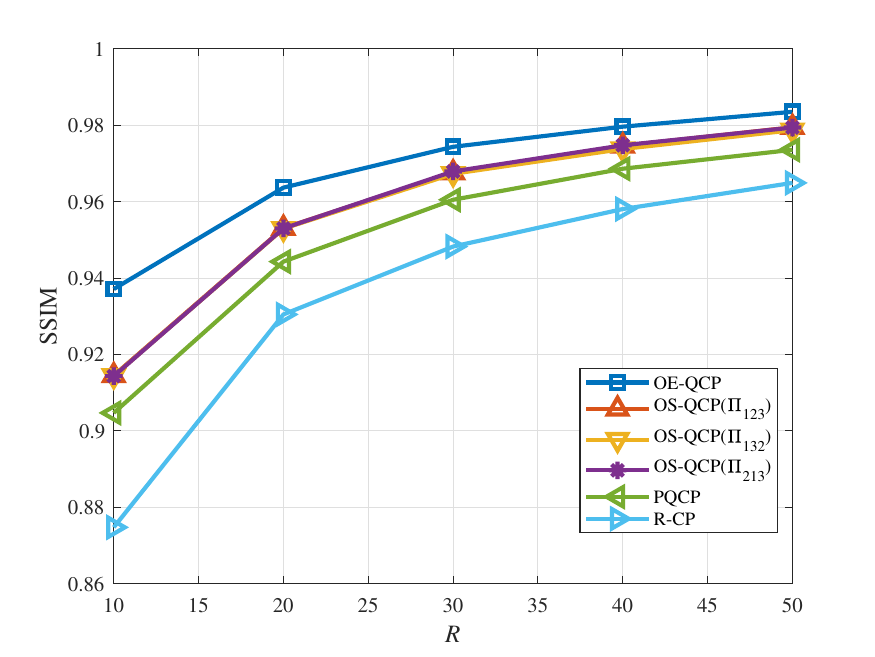} \\
		(c) PSNR  & (d) SSIM 
	\end{tabular}
	\caption{Performance comparison of different decomposition methods
		on the \textit{Hall Monitor} and \textit{Grandma} video sequences
		for different component counts \(R\).}
	\label{fig:rank_performance}
\end{figure*}

Fig.~\ref{fig:rank_performance} presents the quantitative
comparison for different component counts \(R\).
At each prescribed component count, the fixed-order OS-QCP
variants achieve similar approximation accuracy, with modest
variations across multiplication orders.
OE-QCP consistently achieves slightly higher PSNR and SSIM
and lower RE than the fixed-order OS-QCP variants, supporting
the use of weighted ordered products for these video sequences.
More importantly, the learned weights remain close to
\(1/6\) across the tested settings, suggesting that these
natural video data benefit from multiple multiplication
orders rather than favoring a single one.
This further motivates allowing all orders to
participate instead of imposing one fixed order a priori.

In addition, the OS-QCP models also consistently outperform PQCP and R-CP.
Allowing quaternion-valued factors in all three modes provides
greater modeling flexibility than the real-valued middle-factor
restriction in PQCP.
PQCP generally achieves better approximation accuracy than
R-CP on the \textit{Grandma} sequence.
Among the compared methods, OE-QCP achieves the best
approximation accuracy on the tested video sequences.

\begin{figure*}[htbp]
	\centering
	\setlength{\tabcolsep}{1pt}  % 减小列间距，默认是 6pt
	\resizebox{\textwidth}{!}{%
		\begin{tabular}{ccccccc}
			% ------- 顶部大标题 -------
			\multicolumn{7}{c}{\textit{Grandma}} \\
			
			% --- 第一行 (R10) ---
			\includegraphics[width=2cm]{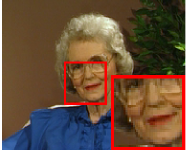} &
			\includegraphics[width=2cm]{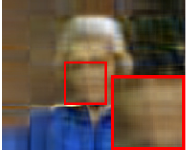} &
			\includegraphics[width=2cm]{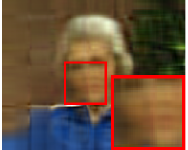} &
			\includegraphics[width=2cm]{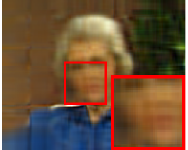} &
			\includegraphics[width=2cm]{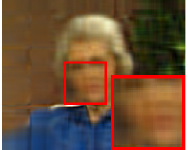} &
			\includegraphics[width=2cm]{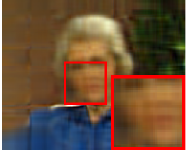} &
			\includegraphics[width=2cm]{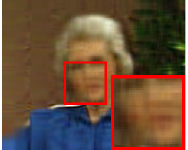} \\
			
			% --- 第二行 (R20) ---
			\includegraphics[width=2cm]{three_CPs/grandma/results/ori_frame_015.pdf} &
			\includegraphics[width=2cm]{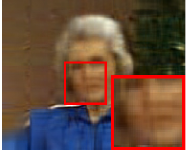} &
			\includegraphics[width=2cm]{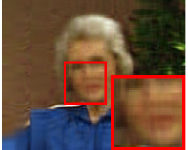} &
			\includegraphics[width=2cm]{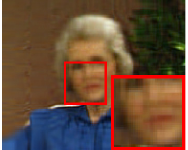} &
			\includegraphics[width=2cm]{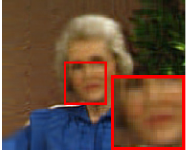} &
			\includegraphics[width=2cm]{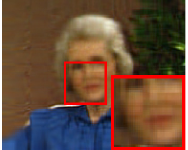} &
			\includegraphics[width=2cm]{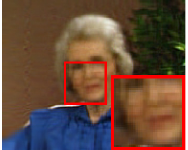} \\
			
			% --- 第三行 (R30) ---
			\includegraphics[width=2cm]{three_CPs/grandma/results/ori_frame_015.pdf} &
			\includegraphics[width=2cm]{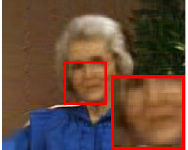} &
			\includegraphics[width=2cm]{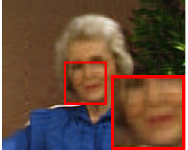} &
			\includegraphics[width=2cm]{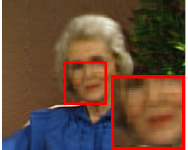} &
			\includegraphics[width=2cm]{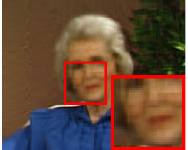} &
			\includegraphics[width=2cm]{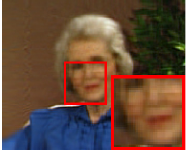} &
			\includegraphics[width=2cm]{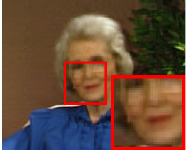} \\
			
			% --- 第四行 (R40) ---
			\includegraphics[width=2cm]{three_CPs/grandma/results/ori_frame_015.pdf} &
			\includegraphics[width=2cm]{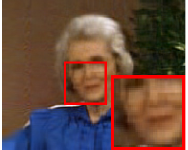} &
			\includegraphics[width=2cm]{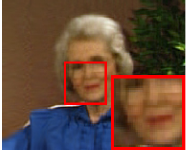} &
			\includegraphics[width=2cm]{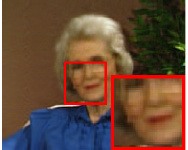} &
			\includegraphics[width=2cm]{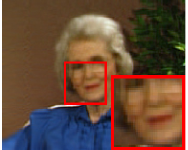} &
			\includegraphics[width=2cm]{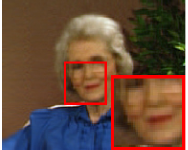} &
			\includegraphics[width=2cm]{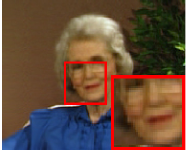} \\
			
			% --- 第五行 (R50) ---
			\includegraphics[width=2cm]{three_CPs/grandma/results/ori_frame_015.pdf} &
			\includegraphics[width=2cm]{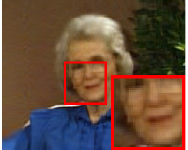} &
			\includegraphics[width=2cm]{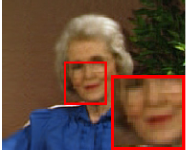} &
			\includegraphics[width=2cm]{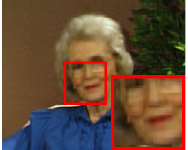} &
			\includegraphics[width=2cm]{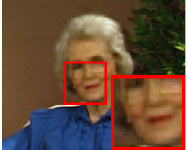} &
			\includegraphics[width=2cm]{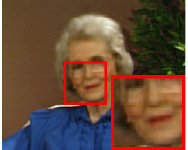} &
			\includegraphics[width=2cm]{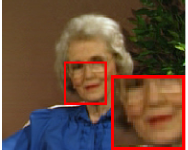} \\

			\small (a) & \small (b) & \small (c) & \small (d) & \small (e) & 
			\small (f) & \small (g)  \\
		\end{tabular}
	}

	\caption{Visual comparison of frame 15 from the \textit{Grandma}
		sequence for different component counts and decomposition methods.
		From top to bottom, the rows correspond to \(R=10,20,30,40,50\).
		From left to right, the columns show the ground truth, R-CP,
		PQCP, OS-QCP under orders \(123\), \(132\), and \(213\),
		and OE-QCP.}
	\label{fig:visual}
\end{figure*}

\begin{figure*}[htbp]
	\centering
	\setlength{\tabcolsep}{1pt}  % 减小列间距，默认是 6pt
	\resizebox{\textwidth}{!}{%
		\begin{tabular}{ccccccc}
			% ------- 顶部大标题 -------
			\multicolumn{7}{c}{\textit{Hall Monitor}} \\
			
			% --- 第一行 (R10) ---
			\includegraphics[width=2cm]{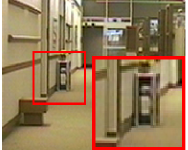} &
			\includegraphics[width=2cm]{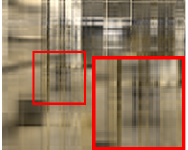} &
			\includegraphics[width=2cm]{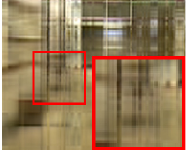} &
			\includegraphics[width=2cm]{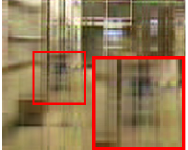} &
			\includegraphics[width=2cm]{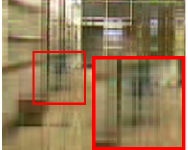} &
			\includegraphics[width=2cm]{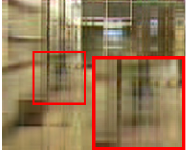} &
			\includegraphics[width=2cm]{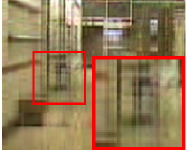} \\
			
			% --- 第二行 (R20) ---
			\includegraphics[width=2cm]{three_CPs/hall_monitor/results/ori_frame_015.pdf} &
			\includegraphics[width=2cm]{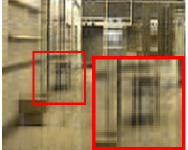} &
			\includegraphics[width=2cm]{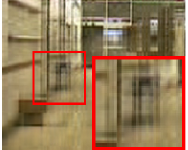} &
			\includegraphics[width=2cm]{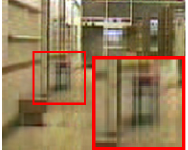} &
			\includegraphics[width=2cm]{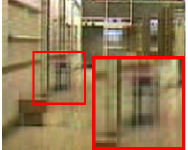} &
			\includegraphics[width=2cm]{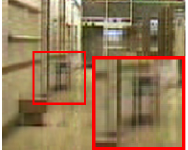} &
			\includegraphics[width=2cm]{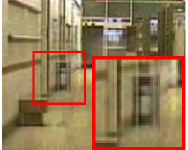} \\
			
			% --- 第三行 (R30) ---
			\includegraphics[width=2cm]{three_CPs/hall_monitor/results/ori_frame_015.pdf} &
			\includegraphics[width=2cm]{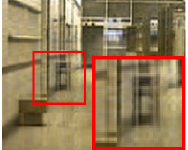} &
			\includegraphics[width=2cm]{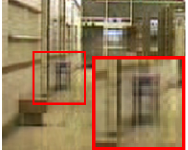} &
			\includegraphics[width=2cm]{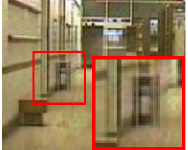} &
			\includegraphics[width=2cm]{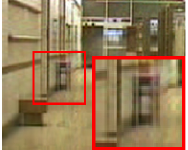} &
			\includegraphics[width=2cm]{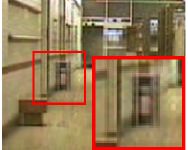} &
			\includegraphics[width=2cm]{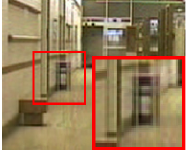} \\
			
			% --- 第四行 (R40) ---
			\includegraphics[width=2cm]{three_CPs/hall_monitor/results/ori_frame_015.pdf} &
			\includegraphics[width=2cm]{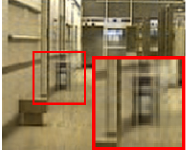} &
			\includegraphics[width=2cm]{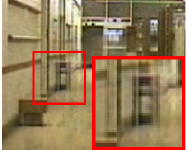} &
			\includegraphics[width=2cm]{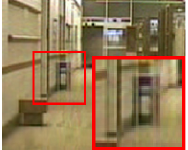} &
			\includegraphics[width=2cm]{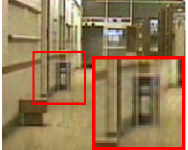} &
			\includegraphics[width=2cm]{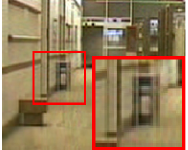} &
			\includegraphics[width=2cm]{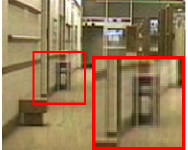} \\
			
			% --- 第五行 (R50) ---
			\includegraphics[width=2cm]{three_CPs/hall_monitor/results/ori_frame_015.pdf} &
			\includegraphics[width=2cm]{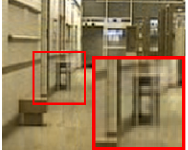} &
			\includegraphics[width=2cm]{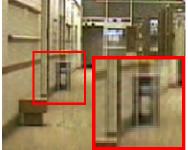} &
			\includegraphics[width=2cm]{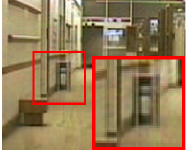} &
			\includegraphics[width=2cm]{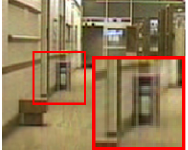} &
			\includegraphics[width=2cm]{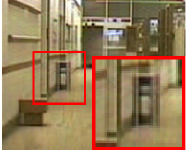} &
			\includegraphics[width=2cm]{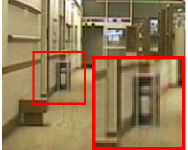} \\
			
			% ------- 底部的子标签行 -------
			\small (a) & \small (b) & \small (c) & \small (d) & \small (e) & 
			\small (f) & \small (g)  \\
		\end{tabular}
	}

	\caption{Visual comparison of frame 15 from the \textit{Hall Monitor}
		sequence for different component counts and decomposition methods.
		From top to bottom, the rows correspond to \(R=10,20,30,40,50\).
		From left to right, the columns show the ground truth, R-CP,
		PQCP, OS-QCP under orders \(123\), \(132\), and \(213\),
		and OE-QCP.}
	\label{fig:visual2}
\end{figure*}

Figs.~\ref{fig:visual} and~\ref{fig:visual2} present representative
reconstructions of the \textit{Grandma} and \textit{Hall Monitor}
sequences, respectively. The visual results agree with the
quantitative comparisons. OE-QCP generally produces the most
faithful reconstructions, particularly at smaller component counts,
whereas R-CP exhibits more pronounced color distortions and artifacts.
For example, the R-CP reconstruction of \textit{Grandma}
in Fig.~\ref{fig:visual} shows noticeable color loss around the mouth.
The visual quality of all methods improves as \(R\) increases,
reflecting the greater modeling capacity provided by additional
components.

Additional results illustrating the evolution of the
relative errors during iteration are provided in Section~S5 of the supplementary materials.

\subsection{Color Video Inpainting Experiments}
In this subsection, we conduct numerical experiments on color video inpainting to evaluate the performance of the proposed OE-QCP model. We compare OE-QCP with several representative baseline methods, including R-CP~\cite{harshman1970foundations}, PQCP~\cite{flamant2026multilinear}, BCPF~\cite{7010937}, LRQTC~\cite{miao2020low}, and CoR-QTURV~\cite{yang2025randomized}. R-CP and BCPF are representative real-valued CP-based methods and provide comparisons with the real-domain counterpart of our OE-QCP. PQCP is included because, to the best of our knowledge, it is the only existing CP model formulated for quaternion tensors. In addition, LRQTC and CoR-QTURV are included as two of the most representative and state-of-the-art quaternion tensor completion methods for color video inpainting.  Eight publicly available color video sequences, namely \textit{Grandma}, \textit{Silent}, \textit{Mother and Daughter}, \textit{Hall Monitor}, \textit{Waterfall}, \textit{Suzie}, \textit{News}, and \textit{Claire}, are selected for the experiments. For each video sequence, the first 150 frames are used, and each video is represented as a third-order quaternion tensor $\dot{\mathcal{X}}\in\mathbb{H}^{144\times176\times150}$.

The reconstruction quality is evaluated using PSNR and SSIM. The sampling ratio (SR) is defined as $
\mathrm{SR}
=
\frac{|\Omega|}{I_1 I_2 I_3}$,
where $\Omega\in\{0,1\}^{I_1\times I_2\times I_3}$ denotes the randomly generated observation mask. The same mask is applied to all four components of the quaternion-valued tensor. The regularization parameters of OE-QCP are set to $\lambda_A=\lambda_B=\lambda_C=10^{-3}$ and $\lambda_{\omega}=10^{-8}$, while the proximal parameters are set to $\eta_A=\eta_B=\eta_C=10^{-6}$. For $\mathrm{SR}\in\{0.1,0.2,0.3\}$, the component counts
are empirically set to $R=55$, $75$, and $95$, respectively.

\begin{table*}[htbp]
	\caption{A comparison of quantitative assessment indices (PSNR/SSIM) across different methods on eight color video sequences.}
	\label{table_comparison}
	\centering
	\resizebox{\textwidth}{!}{
		\begin{tabular}{|c|c|c|c|c|c|c|}
			\hline
			Methods: & BCPF & LRQTC & R-CP & PQCP & CoR-QTURV & \textbf{OE-QCP} \\
			\hline
			Color Videos: & \multicolumn{6}{c|}{SR = 0.3} \\
			\hline
			\textit{Grandma} & 34.0253/0.9847 & 30.1204/0.9603 & 34.2809/0.9834 & 34.8335/0.9878 & 33.5118/0.9835 & \textbf{36.7675}/\textbf{0.9923} \\
			\textit{Silent} & 27.9257/0.8500 & 28.9764/0.8832 & 28.8168/0.8756 & 28.8538/0.8765 & 28.3576/0.8871 & \textbf{30.6294}/\textbf{0.9145} \\
			\textit{Mother and Daughter} & 31.9827/0.9793 & 34.2278/0.9873 & 33.0798/0.9832 & 33.1803/0.9838 & 31.9520/0.9808 & \textbf{34.6029}/\textbf{0.9875} \\
			\textit{Hall Monitor} & 26.4554/0.9276 & 26.1402/0.9279 & 27.2849/0.9401 & 27.0214/0.9383 & 27.8232/0.9483 & \textbf{29.1212}/\textbf{0.9570} \\
			\textit{Waterfall} & 26.8920/0.8639 & 27.1696/0.8862 & 27.4027/0.8863 & 27.5874/0.8915 & \textbf{29.2809}/\textbf{0.9209} & 28.8735/0.9156 \\
			\textit{Suzie} & 28.8761/0.8979 & 30.0886/\textbf{0.9391} & 29.6794/0.9129 & 29.4982/0.9126 & 29.5483/0.9223 & \textbf{31.1754}/0.9371 \\
			\textit{News} & 25.2023/0.8790 & 26.5092/0.9066 & 25.7103/0.8869 & 25.7918/0.8927 & 26.7405/0.9149 & \textbf{27.5363}/\textbf{0.9242} \\
			\textit{Claire} & 34.5451/0.9890 & 31.1959/0.9805 & 34.8901/0.9893 & 34.9223/0.9901 & 33.9144/0.9889 & \textbf{37.5227}/\textbf{0.9944} \\
			\hline
			Aver. & 29.4881/0.9214 & 29.3035/0.9339 & 30.1431/0.9322 & 30.2111/0.9342 & 30.1411/0.9433 & \textbf{32.0286}/\textbf{0.9528} \\
			\hline
			Color Videos: & \multicolumn{6}{c|}{SR = 0.2} \\
			\hline
			\textit{Grandma} & 32.7891/0.9790 & 27.6464/0.9216 & 32.7661/0.9760 & 33.2829/0.9824 & 31.7123/0.9751 & \textbf{35.4427}/\textbf{0.9897} \\
			\textit{Silent} & 27.1054/0.8282 & 27.1728/0.8184 & 27.5835/0.8461 & 27.6308/0.8472 & 26.6212/0.8473 & \textbf{29.4282}/\textbf{0.8917} \\
			\textit{Mother and Daughter} & 31.1867/0.9766 & 31.6512/0.9800 & 31.7538/0.9784 & 32.0278/0.9799 & 30.4640/0.9754 & \textbf{33.5480}/\textbf{0.9847} \\
			\textit{Hall Monitor} & 25.2801/0.9131 & 24.0645/0.8962 & 25.7793/0.9211 & 25.7136/0.9230 & 26.1712/0.9331 & \textbf{27.8728}/\textbf{0.9458} \\
			\textit{Waterfall} & 26.0331/0.8432 & 25.6241/0.8473 & 26.2136/0.8565 & 26.4293/0.8640 & 27.4252/0.8853 & \textbf{27.8055}/\textbf{0.8926} \\
			\textit{Suzie} & 27.7907/0.8809 & 27.4123/0.8825 & 28.2714/0.8924 & 28.1662/0.8920 & 27.6961/0.8983 & \textbf{30.0168}/\textbf{0.9224} \\
			\textit{News} & 24.3254/0.8592 & 24.5511/0.8709 & 24.3791/0.8530 & 24.4929/0.8603 & 24.9607/0.8851 & \textbf{26.2490}/\textbf{0.9031} \\
			\textit{Claire} & 33.2878/0.9860 & 28.4367/0.9656 & 33.0540/0.9847 & 33.1636/0.9858 & 31.7361/0.9829 & \textbf{35.8152}/\textbf{0.9920} \\
			\hline
			Aver. & 28.4748/0.9083 & 27.0699/0.8978 & 28.7251/0.9135 & 28.8634/0.9168 & 28.3484/0.9228 & \textbf{30.7723}/\textbf{0.9403} \\
			\hline
			Color Videos: & \multicolumn{6}{c|}{SR = 0.1} \\
			\hline
			\textit{Grandma} & 30.9795/0.9664 & 25.0058/0.8879 & 30.1954/0.9572 & 31.2080/0.9718 & 29.3056/0.9535 & \textbf{33.7344}/\textbf{0.9842} \\
			\textit{Silent} & 25.3119/0.7774 & 24.6607/0.7487 & 25.7676/0.7875 & 26.0258/0.8061 & 24.3104/0.7821 & \textbf{28.0788}/\textbf{0.8587} \\
			\textit{Mother and Daughter} & 29.5770/0.9699 & 28.3985/0.9667 & 30.1687/0.9714 & 30.4311/0.9738 & 28.5922/0.9673 & \textbf{32.2164}/\textbf{0.9803} \\
			\textit{Hall Monitor} & 23.2315/0.8801 & 20.7005/0.8250 & 23.7843/0.8896 & 23.8078/0.8977 & 23.6013/0.9012 & \textbf{26.3116}/\textbf{0.9287} \\
			\textit{Waterfall} & 24.6446/0.8058 & 23.2744/0.7819 & 24.6375/0.8071 & 24.9918/0.8212 & 25.1322/0.8261 & \textbf{26.7262}/\textbf{0.8646} \\
			\textit{Suzie} & 25.7145/0.8466 & 24.6764/0.8471 & 26.2007/0.8481 & 26.3111/0.8573 & 24.8799/0.8504 & \textbf{28.4764}/\textbf{0.8991} \\
			\textit{News} & 22.3569/0.7997 & 21.1162/0.7858 & 22.3487/0.7841 & 22.6350/0.8116 & 22.6191/0.8268 & \textbf{24.8490}/\textbf{0.8724} \\
			\textit{Claire} & 31.0433/0.9778 & 24.8183/0.9227 & 29.2869/0.9643 & 30.4167/0.9744 & 29.2043/0.9714 & \textbf{33.7826}/\textbf{0.9882} \\
			\hline
			Aver. & 26.6074/0.8780 & 24.0814/0.8457 & 26.5487/0.8762 & 26.9784/0.8892 & 25.9556/0.8849 & \textbf{29.2719}/\textbf{0.9220} \\
			\hline
		\end{tabular}
	}
\end{table*}

Table~\ref{table_comparison} shows that OE-QCP achieves the
highest average PSNR and SSIM over the eight video sequences
at all three sampling ratios, with average PSNR gains of
$1.82$, $1.91$, and $2.29$ dB over PQCP, respectively,
although some baselines perform better on individual sequences.

\begin{figure*}[htbp]
	\centering
	\setlength{\tabcolsep}{1pt} 
	\resizebox{\textwidth}{!}{%
		\begin{tabular}{cccccccc}
			%			\multicolumn{7}{c}{} \\
			
			% ---  ---
			\includegraphics[width=2cm]{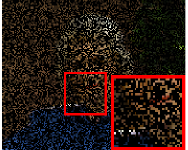} &
			\includegraphics[width=2cm]{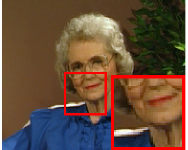} &
			\includegraphics[width=2cm]{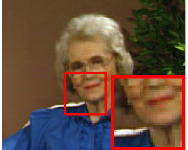} &
			\includegraphics[width=2cm]{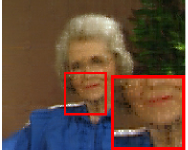} &
			\includegraphics[width=2cm]{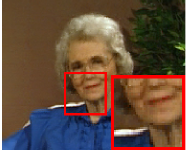} &
			\includegraphics[width=2cm]{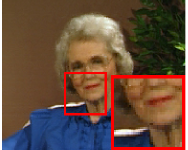} &
			\includegraphics[width=2cm]{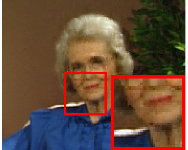} &
			\includegraphics[width=2cm]{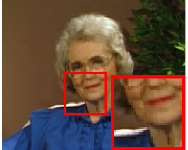} \\
			
			% ---  ---
			\includegraphics[width=2cm]{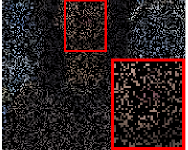} &
			\includegraphics[width=2cm]{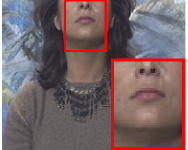} &
			\includegraphics[width=2cm]{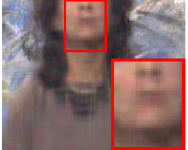} &
			\includegraphics[width=2cm]{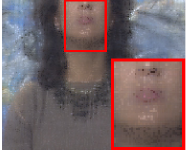} &
			\includegraphics[width=2cm]{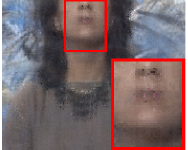} &
			\includegraphics[width=2cm]{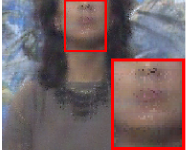} &
			\includegraphics[width=2cm]{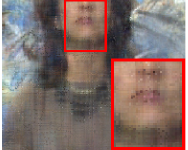} &
			\includegraphics[width=2cm]{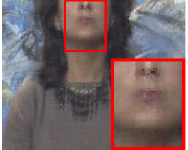} \\
			
			% --- ---
			\includegraphics[width=2cm]{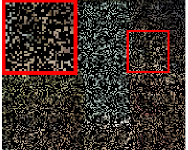} &
			\includegraphics[width=2cm]{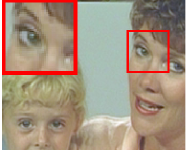} &
			\includegraphics[width=2cm]{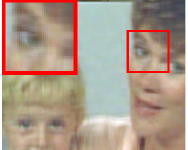} &
			\includegraphics[width=2cm]{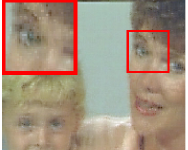} &
			\includegraphics[width=2cm]{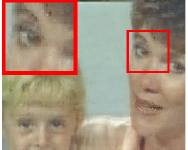} &
			\includegraphics[width=2cm]{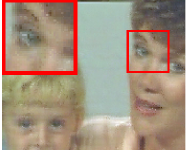} &
			\includegraphics[width=2cm]{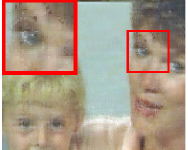} &
			\includegraphics[width=2cm]{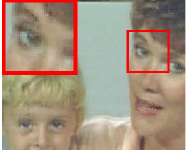} \\
			
			% ---  ---
			\includegraphics[width=2cm]{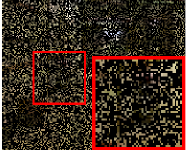} &
			\includegraphics[width=2cm]{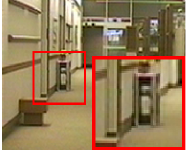} &
			\includegraphics[width=2cm]{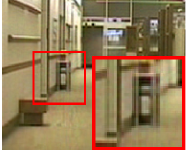} &
			\includegraphics[width=2cm]{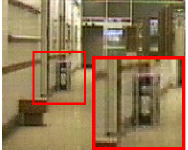} &
			\includegraphics[width=2cm]{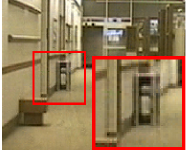} &
			\includegraphics[width=2cm]{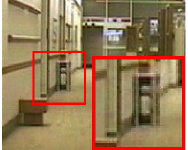} &
			\includegraphics[width=2cm]{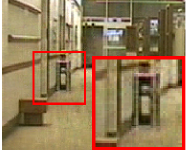} &
			\includegraphics[width=2cm]{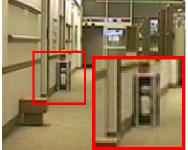} \\
			
			% --- ---
			\includegraphics[width=2cm]{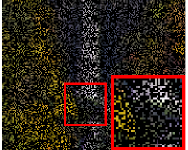} &
			\includegraphics[width=2cm]{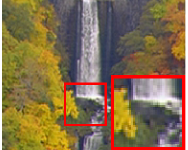} &
			\includegraphics[width=2cm]{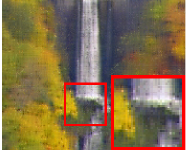} &
			\includegraphics[width=2cm]{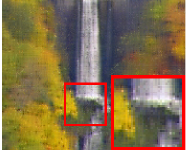} &
			\includegraphics[width=2cm]{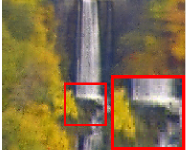} &
			\includegraphics[width=2cm]{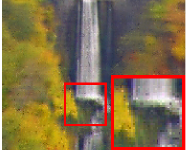} &
			\includegraphics[width=2cm]{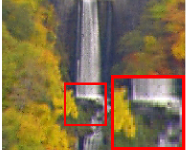} &
			\includegraphics[width=2cm]{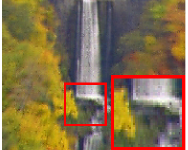} \\
			
			\includegraphics[width=2cm]{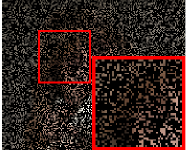} &
			\includegraphics[width=2cm]{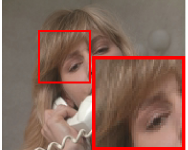} &
			\includegraphics[width=2cm]{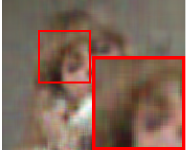} &
			\includegraphics[width=2cm]{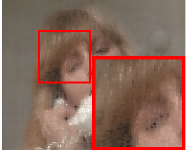} &
			\includegraphics[width=2cm]{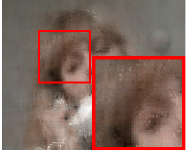} &
			\includegraphics[width=2cm]{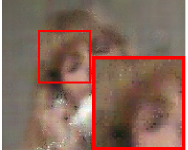} &
			\includegraphics[width=2cm]{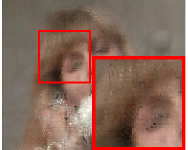} &
			\includegraphics[width=2cm]{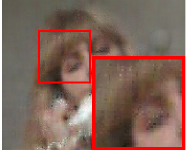} \\
			
			\includegraphics[width=2cm]{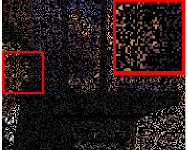} &
			\includegraphics[width=2cm]{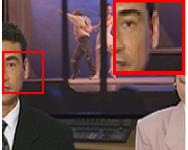} &
			\includegraphics[width=2cm]{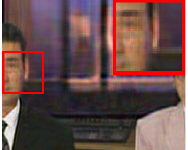} &
			\includegraphics[width=2cm]{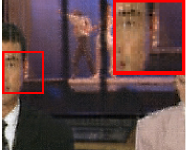} &
			\includegraphics[width=2cm]{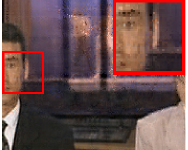} &
			\includegraphics[width=2cm]{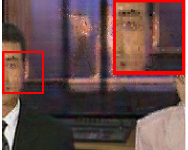} &
			\includegraphics[width=2cm]{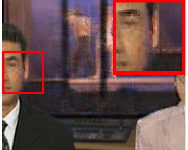} &
			\includegraphics[width=2cm]{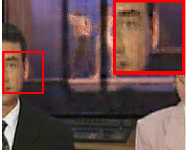} \\
			
			\includegraphics[width=2cm]{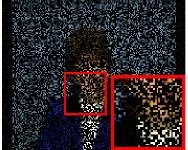} &
			\includegraphics[width=2cm]{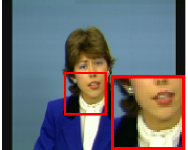} &
			\includegraphics[width=2cm]{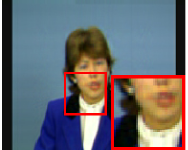} &
			\includegraphics[width=2cm]{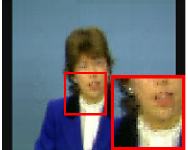} &
			\includegraphics[width=2cm]{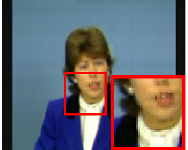} &
			\includegraphics[width=2cm]{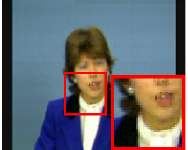} &
			\includegraphics[width=2cm]{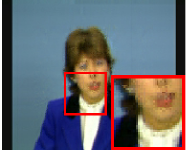} &
			\includegraphics[width=2cm]{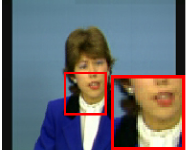} \\
			\small (a) & \small (b) & \small (c) & \small (d) & \small (e) & 
			\small (f) & \small (g)& \small (h)  \\
		\end{tabular}
	}

	\caption{Video completion results at $\mathrm{SR}=0.3$.
		Each row shows a representative frame from a different video sequence.
		Columns from left to right: observed, ground truth, BCPF,
		LRQTC, R-CP, PQCP, CoR-QTURV, and OE-QCP.}
	\label{fig:completion03}
\end{figure*}

\begin{figure*}[htbp]
	\centering
	\setlength{\tabcolsep}{1pt}  % 
	\resizebox{\textwidth}{!}{%
		\begin{tabular}{cccccccc}
			%			\multicolumn{7}{c}{} \\
			
			% --- ---
			\includegraphics[width=2cm]{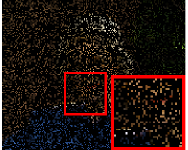} &
			\includegraphics[width=2cm]{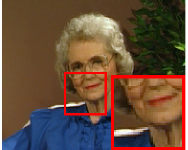} &
			\includegraphics[width=2cm]{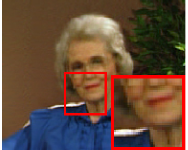} &
			\includegraphics[width=2cm]{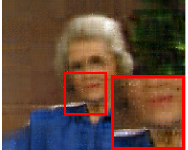} &
			\includegraphics[width=2cm]{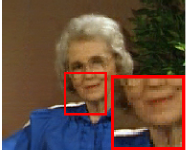} &
			\includegraphics[width=2cm]{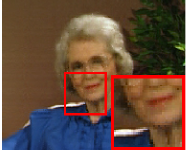} &
			\includegraphics[width=2cm]{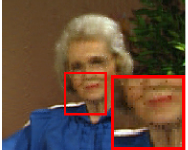} &
			\includegraphics[width=2cm]{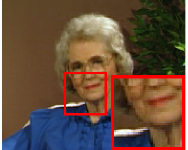} \\
			
			% --- ---
			\includegraphics[width=2cm]{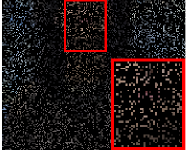} &
			\includegraphics[width=2cm]{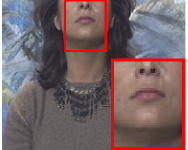} &
			\includegraphics[width=2cm]{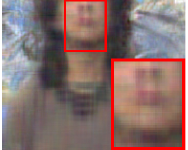} &
			\includegraphics[width=2cm]{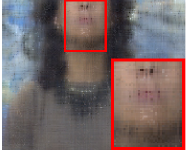} &
			\includegraphics[width=2cm]{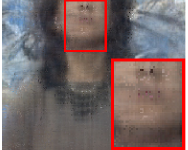} &
			\includegraphics[width=2cm]{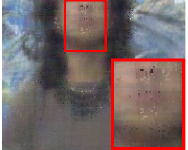} &
			\includegraphics[width=2cm]{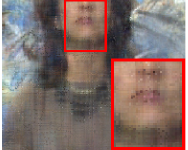} &
			\includegraphics[width=2cm]{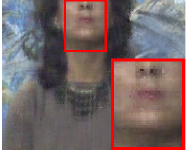} \\
			
			% --- ---
			\includegraphics[width=2cm]{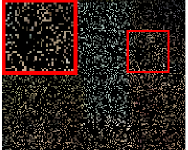} &
			\includegraphics[width=2cm]{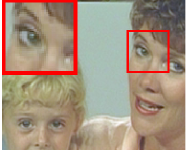} &
			\includegraphics[width=2cm]{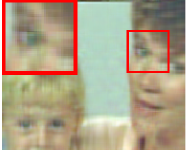} &
			\includegraphics[width=2cm]{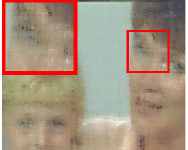} &
			\includegraphics[width=2cm]{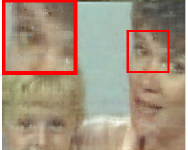} &
			\includegraphics[width=2cm]{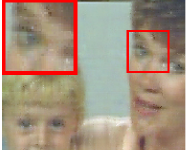} &
			\includegraphics[width=2cm]{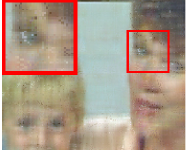} &
			\includegraphics[width=2cm]{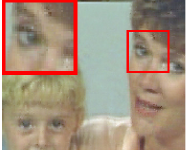} \\
			
			% --- ---
			\includegraphics[width=2cm]{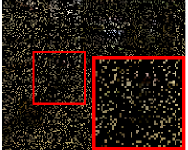} &
			\includegraphics[width=2cm]{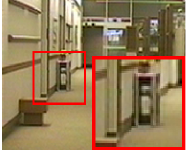} &
			\includegraphics[width=2cm]{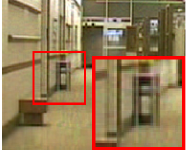} &
			\includegraphics[width=2cm]{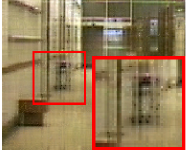} &
			\includegraphics[width=2cm]{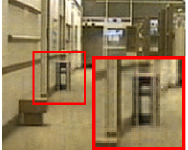} &
			\includegraphics[width=2cm]{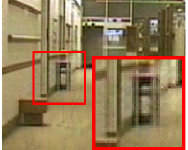} &
			\includegraphics[width=2cm]{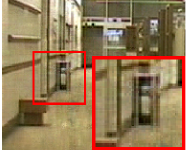} &
			\includegraphics[width=2cm]{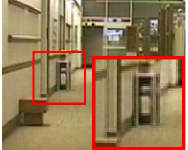} \\
			
			% --- ---
			\includegraphics[width=2cm]{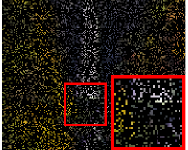} &
			\includegraphics[width=2cm]{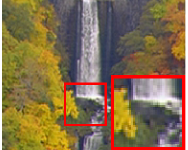} &
			\includegraphics[width=2cm]{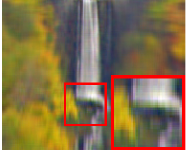} &
			\includegraphics[width=2cm]{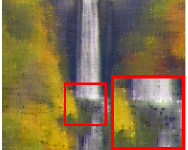} &
			\includegraphics[width=2cm]{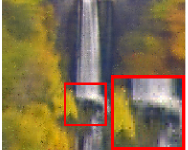} &
			\includegraphics[width=2cm]{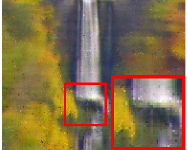} &
			\includegraphics[width=2cm]{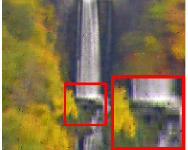} &
			\includegraphics[width=2cm]{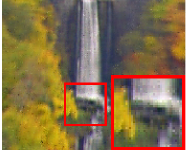} \\
			
			\includegraphics[width=2cm]{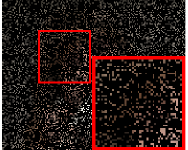} &
			\includegraphics[width=2cm]{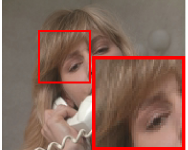} &
			\includegraphics[width=2cm]{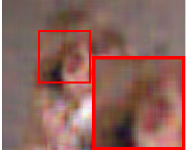} &
			\includegraphics[width=2cm]{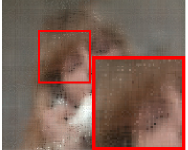} &
			\includegraphics[width=2cm]{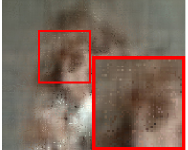} &
			\includegraphics[width=2cm]{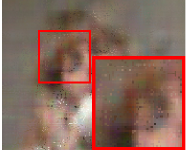} &
			\includegraphics[width=2cm]{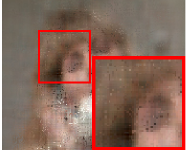} &
			\includegraphics[width=2cm]{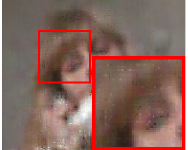} \\
			
			\includegraphics[width=2cm]{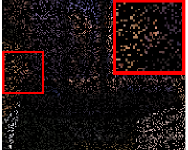} &
			\includegraphics[width=2cm]{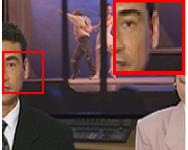} &
			\includegraphics[width=2cm]{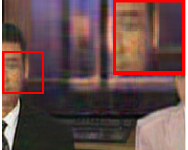} &
			\includegraphics[width=2cm]{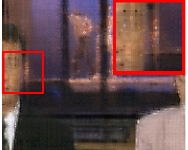} &
			\includegraphics[width=2cm]{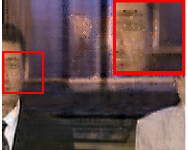} &
			\includegraphics[width=2cm]{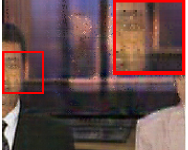} &
			\includegraphics[width=2cm]{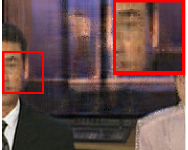} &
			\includegraphics[width=2cm]{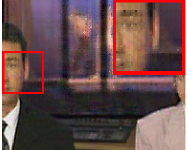} \\
			
			\includegraphics[width=2cm]{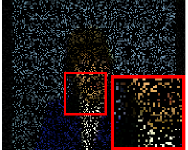} &
			\includegraphics[width=2cm]{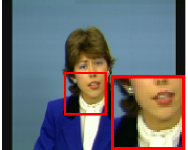} &
			\includegraphics[width=2cm]{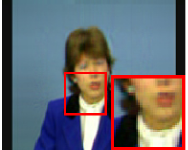} &
			\includegraphics[width=2cm]{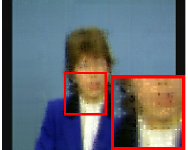} &
			\includegraphics[width=2cm]{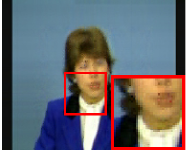} &
			\includegraphics[width=2cm]{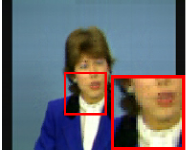} &
			\includegraphics[width=2cm]{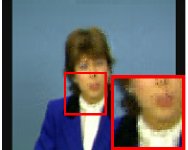} &
			\includegraphics[width=2cm]{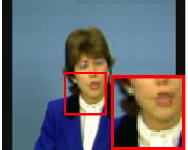} \\
			\small (a) & \small (b) & \small (c) & \small (d) & \small (e) & 
			\small (f) & \small (g) & \small (h)  \\
		\end{tabular}
	}

	\caption{Video completion results at $\mathrm{SR}=0.2$.
		Each row shows a representative frame from a different video sequence.
		Columns from left to right: observed, ground truth, BCPF,
		LRQTC, R-CP, PQCP, CoR-QTURV, and OE-QCP.}
	\label{fig:completion02}
\end{figure*}

\begin{figure*}[htbp]
	\centering
	\setlength{\tabcolsep}{1pt}  % 
	\resizebox{\textwidth}{!}{%
		\begin{tabular}{cccccccc}
			%			\multicolumn{7}{c}{} \\
			
			% --- ---
			\includegraphics[width=2cm]{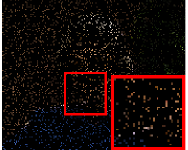} &
			\includegraphics[width=2cm]{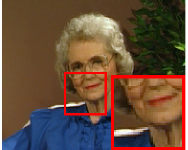} &
			\includegraphics[width=2cm]{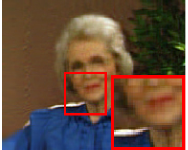} &
			\includegraphics[width=2cm]{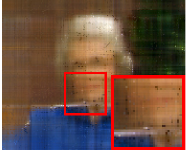} &
			\includegraphics[width=2cm]{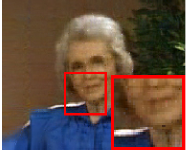} &
			\includegraphics[width=2cm]{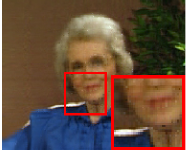} &
			\includegraphics[width=2cm]{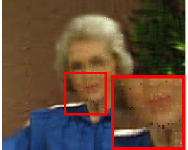} &
			\includegraphics[width=2cm]{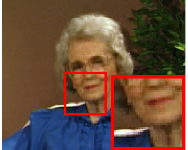} \\
			
			% ---  ---
			\includegraphics[width=2cm]{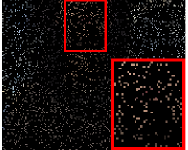} &
			\includegraphics[width=2cm]{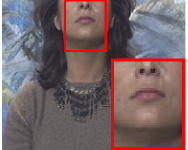} &
			\includegraphics[width=2cm]{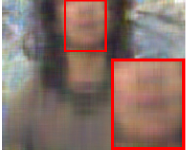} &
			\includegraphics[width=2cm]{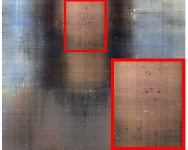} &
			\includegraphics[width=2cm]{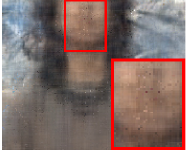} &
			\includegraphics[width=2cm]{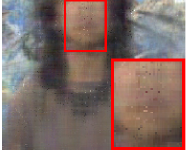} &
			\includegraphics[width=2cm]{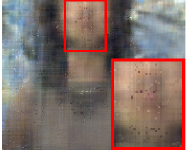} &
			\includegraphics[width=2cm]{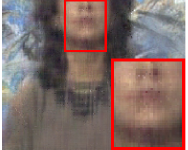} \\
			
			% ---  ---
			\includegraphics[width=2cm]{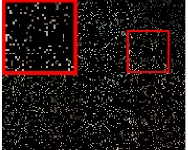} &
			\includegraphics[width=2cm]{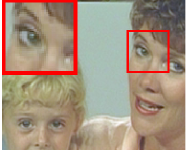} &
			\includegraphics[width=2cm]{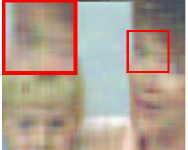} &
			\includegraphics[width=2cm]{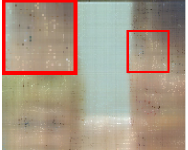} &
			\includegraphics[width=2cm]{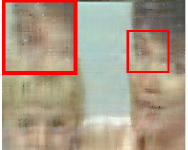} &
			\includegraphics[width=2cm]{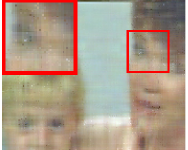} &
			\includegraphics[width=2cm]{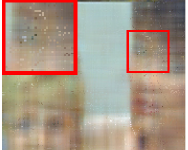} &
			\includegraphics[width=2cm]{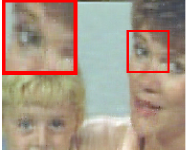} \\
			
			% --- ---
			\includegraphics[width=2cm]{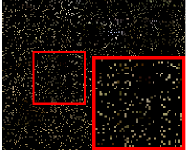} &
			\includegraphics[width=2cm]{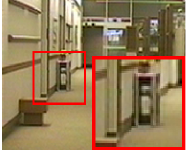} &
			\includegraphics[width=2cm]{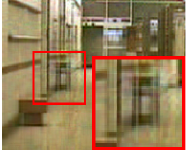} &
			\includegraphics[width=2cm]{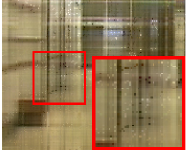} &
			\includegraphics[width=2cm]{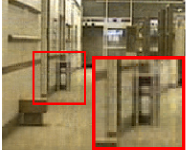} &
			\includegraphics[width=2cm]{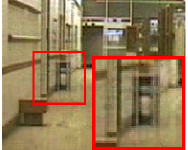} &
			\includegraphics[width=2cm]{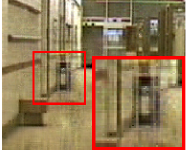} &
			\includegraphics[width=2cm]{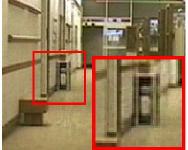} \\
			
			% ------
			\includegraphics[width=2cm]{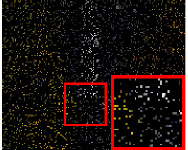} &
			\includegraphics[width=2cm]{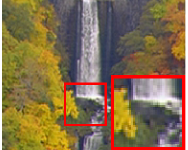} &
			\includegraphics[width=2cm]{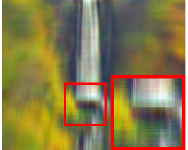} &
			\includegraphics[width=2cm]{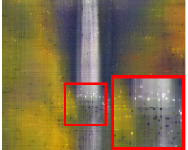} &
			\includegraphics[width=2cm]{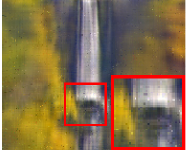} &
			\includegraphics[width=2cm]{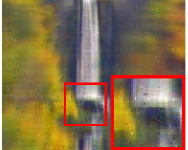} &
			\includegraphics[width=2cm]{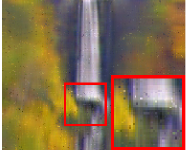} &
			\includegraphics[width=2cm]{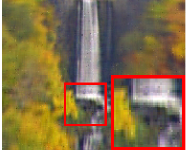} \\
			
			\includegraphics[width=2cm]{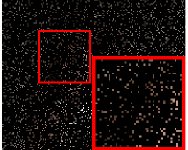} &
			\includegraphics[width=2cm]{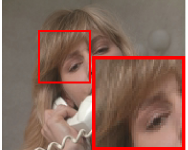} &
			\includegraphics[width=2cm]{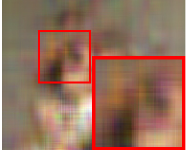} &
			\includegraphics[width=2cm]{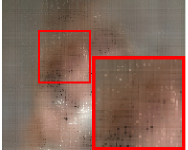} &
			\includegraphics[width=2cm]{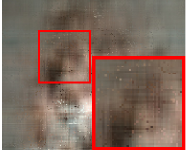} &
			\includegraphics[width=2cm]{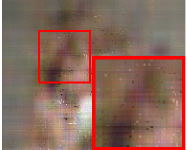} &
			\includegraphics[width=2cm]{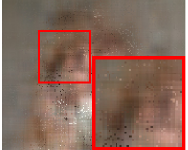} &
			\includegraphics[width=2cm]{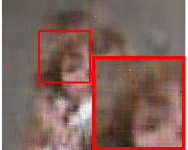} \\
			
			\includegraphics[width=2cm]{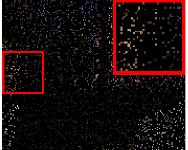} &
			\includegraphics[width=2cm]{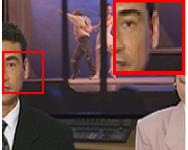} &
			\includegraphics[width=2cm]{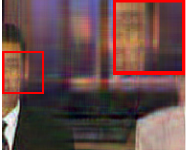} &
			\includegraphics[width=2cm]{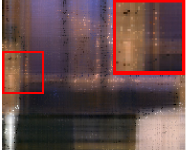} &
			\includegraphics[width=2cm]{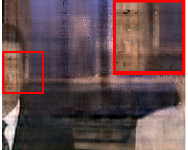} &
			\includegraphics[width=2cm]{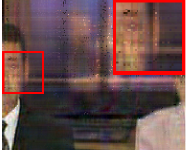} &
			\includegraphics[width=2cm]{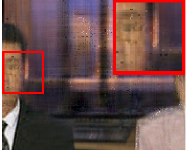} &
			\includegraphics[width=2cm]{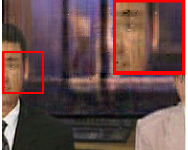} \\
			
			\includegraphics[width=2cm]{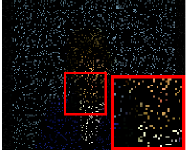} &
			\includegraphics[width=2cm]{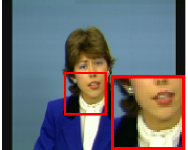} &
			\includegraphics[width=2cm]{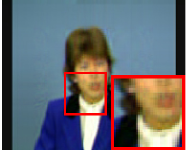} &
			\includegraphics[width=2cm]{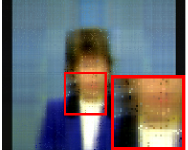} &
			\includegraphics[width=2cm]{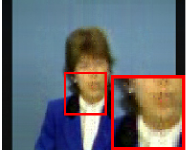} &
			\includegraphics[width=2cm]{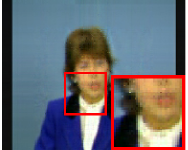} &
			\includegraphics[width=2cm]{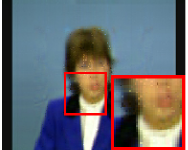} &
			\includegraphics[width=2cm]{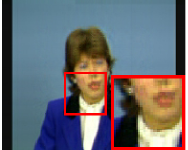} \\
			\small (a) & \small (b) & \small (c) & \small (d) & \small (e) & 
			\small (f) & \small (g) & \small (h)  \\
		\end{tabular}
	}

	\caption{Video completion results at $\mathrm{SR}=0.1$.
		Each row shows a representative frame from a different video sequence.
		Columns from left to right: observed, ground truth, BCPF,
		LRQTC, R-CP, PQCP, CoR-QTURV, and OE-QCP.}
	\label{fig:completion01}
\end{figure*}

Figs.~\ref{fig:completion03}--\ref{fig:completion01} compare
the visual results at $\mathrm{SR}=0.3$, $0.2$, and $0.1$,
respectively. Most methods recover the main structures and
colors at $\mathrm{SR}=0.3$, but detail loss, structural
distortions, and color artifacts become more apparent as
the sampling ratio decreases. OE-QCP generally preserves
details, structural continuity, and colors more effectively,
with a clearer visual advantage at $\mathrm{SR}=0.1$.
These observations agree with the quantitative comparisons.

\subsection{Ablation Studies}
To assess the effectiveness of the order-ensemble mechanism,
we compare OE-QCP with OS-QCP under the three representative
orders \(123\), \(132\), and \(213\).
The experiments use the same eight video sequences and
settings as in Section~6.2, with
\(\mathrm{SR}\in\{0.1,0.2,0.3\}\).
Figs.~\ref{fig:PSNR_comparison} and~\ref{fig:ssim_comparison}
report the PSNR and SSIM results, respectively.

\begin{figure}[htbp]
	\centering
	\begin{subfigure}{0.90\linewidth}
		\centering
		\includegraphics[
		width=\linewidth,
		height=0.24\textheight
		]{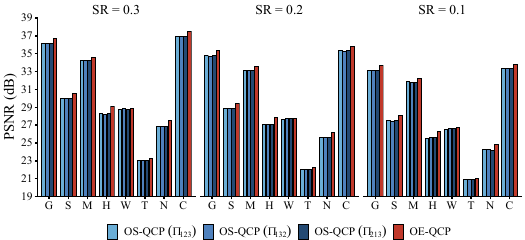}
		\caption{PSNR}
		\label{fig:PSNR_comparison}
	\end{subfigure}
	
	\smallskip
	
	\begin{subfigure}{0.90\linewidth}
		\centering
		\includegraphics[
		width=\linewidth,
		height=0.24\textheight
		]{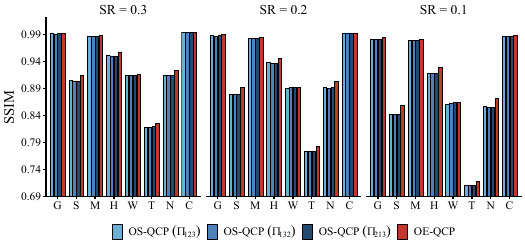}
		\caption{SSIM}
		\label{fig:ssim_comparison}
	\end{subfigure}
	\caption{Comparison of the three fixed-order OS-QCP
		variants and OE-QCP on eight video sequences at
		\(\mathrm{SR}=0.3,0.2,0.1\).
		The horizontal-axis labels abbreviate the video names:
		G (Grandma), S (Silent), M (Mother and Daughter),
		H (Hall Monitor), W (Waterfall), Su (Suzie),
		N (News), and C (Claire).}
	\label{fig:ablation_quantitative}
\end{figure}

Fig.~\ref{fig:ablation_quantitative} 
shows that the three fixed-order OS-QCP variants achieve similar
PSNR and SSIM values, with no single order consistently
outperforming the others. OE-QCP achieves better results,
with larger gains generally observed at lower sampling ratios.
These results demonstrate the benefit of incorporating multiple
multiplication orders, particularly under sparse observations.

\begin{figure}[htbp]
	\centering
	\setlength{\tabcolsep}{0pt}
	\begin{tabular}{@{}ccccc@{}}
		Original &
		OS-QCP (\(123\)) &
		OS-QCP (\(132\)) &
		OS-QCP (\(213\)) &
		OE-QCP \\[3pt]
		
		\includegraphics[width=0.20\linewidth]
		{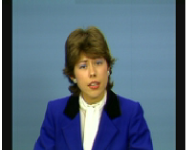} &
		\includegraphics[width=0.20\linewidth]
		{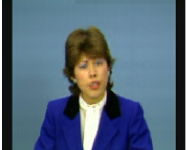} &
		\includegraphics[width=0.20\linewidth]
		{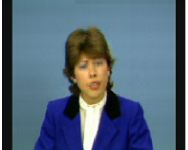} &
		\includegraphics[width=0.20\linewidth]
		{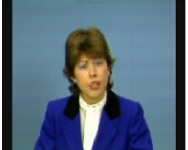} &
		\includegraphics[width=0.20\linewidth]
		{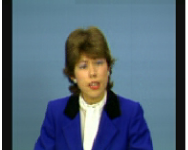} \\[5pt]
		
		\includegraphics[width=0.20\linewidth]
		{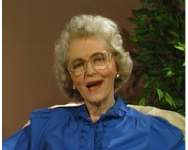} &
		\includegraphics[width=0.20\linewidth]
		{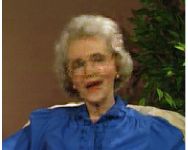} &
		\includegraphics[width=0.20\linewidth]
		{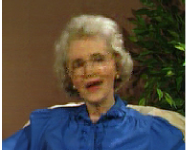} &
		\includegraphics[width=0.20\linewidth]
		{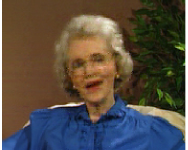} &
		\includegraphics[width=0.20\linewidth]
		{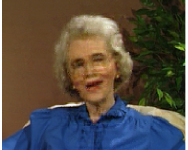} \\[5pt]
		
		\includegraphics[width=0.20\linewidth]
		{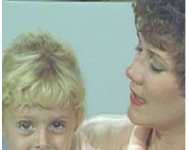} &
		\includegraphics[width=0.20\linewidth]
		{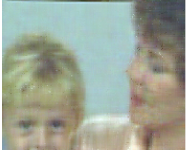} &
		\includegraphics[width=0.20\linewidth]
		{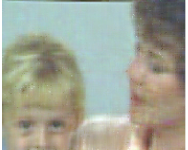} &
		\includegraphics[width=0.20\linewidth]
		{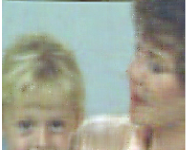} &
		\includegraphics[width=0.20\linewidth]
		{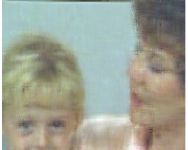}
	\end{tabular}
	
	\caption{Visual comparison of OS-QCP and OE-QCP.
		From top to bottom: \textit{Claire} at
		\(\mathrm{SR}=0.3\), \textit{Grandma} at
		\(\mathrm{SR}=0.2\), and \textit{Mother and Daughter}
		at \(\mathrm{SR}=0.1\).}
	\label{fig:ablation_visual}
\end{figure}

The visual comparisons in
Fig.~\ref{fig:ablation_visual}
agree with the quantitative results. The three fixed-order
OS-QCP variants yield similar overall quality but differ in
local textures, boundaries, and fine details. OE-QCP generally
reduces reconstruction artifacts and preserves local structures
more effectively.

\section{Conclusion}
\label{sec:conclusion}

This paper developed an order-specified quaternion CP (OS-QCP)
decomposition with quaternion-valued factor matrices and an
explicitly prescribed multiplication order. The theoretical
analysis established that multiplication order affects the
decomposition itself, even when the factors are freely chosen:
the same tensor can have different OS-QCP ranks under different
orders, and equal ranks do not guarantee equivalence of the
factors under quaternion scaling and column permutations.
We characterized these indeterminacies and established a
sufficient condition for essential uniqueness for any prescribed
multiplication order. An explicit example further showed that
an essentially unique CP decomposition of the complex
representation does not eliminate OS-QCP order dependence.
These results establish multiplication order as a structural
part of quaternion CP decomposition.

For computation, an ALS algorithm was derived for the canonical
order by exploiting the real-linear operators induced by left
and right quaternion multiplication, reducing the factor updates
to real least-squares problems. Building on OS-QCP, we introduced
an order-ensemble quaternion CP (OE-QCP) model that combines
the six multiplication orders through nonnegative weights
using a single set of three quaternion factor matrices.
The model includes every single-order OS-QCP model as a
special case and was applied to quaternion tensor completion.
A parallel row-wise masked ALS algorithm was developed,
with a convergence guarantee under the stated assumptions.
Experiments on synthetic tensors demonstrated order sensitivity
and the ability of OE-QCP to adapt to the generating order,
while color video approximation and completion experiments
demonstrated the practical benefit of incorporating multiple
orders.

The theoretical and numerical results together show that
quaternion noncommutativity has consequences beyond changing
products of fixed factors: it affects the component counts
and factor structures needed to represent a tensor.
OE-QCP incorporates this order dependence into a model in
which the factors and order weights are jointly estimated.
Further work will investigate essential uniqueness for
OE-QCP, more detailed relationships between decompositions
under different orders, and efficient algorithms for
large-scale problems. Extending the order-specified principle
to other quaternion tensor decompositions is another direction
for future research.

\bigskip
\noindent\textbf{Data Availability} Enquiries about data availability should be directed to the authors.
\section*{Declarations}
\noindent\textbf{Conflict of interest} The authors declare that they have no conflict of interest.

% BibTeX users please use one of
%\bibliographystyle{spbasic}      % basic style, author-year citations
\bibliographystyle{spmpsci}      % mathematics and physical sciences
\bibliography{references}

@article{sidiropoulos2017tensor,
	title={Tensor decomposition for signal processing and machine learning},
	author={Sidiropoulos, Nicholas D and De Lathauwer, Lieven and Fu, Xiao and Huang, Kejun and Papalexakis, Evangelos E and Faloutsos, Christos},
	journal={IEEE Transactions on Signal Processing},
	volume={65},
	number={13},
	pages={3551--3582},
	year={2017},
	publisher={IEEE}
}

@article{cichocki2015tensor,
	title={Tensor decompositions for signal processing applications: From two-way to multiway component analysis},
	author={Cichocki, Andrzej and Mandic, Danilo and De Lathauwer, Lieven and Zhou, Guoxu and Zhao, Qibin and Caiafa, Cesar and Phan, Huy Anh},
	journal={IEEE Signal Processing Magazine},
	volume={32},
	number={2},
	pages={145--163},
	year={2015},
	publisher={IEEE}
}

@article{hong2020generalized,
	title={Generalized canonical polyadic tensor decomposition},
	author={Hong, David and Kolda, Tamara G and Duersch, Jed A},
	journal={SIAM Review},
	volume={62},
	number={1},
	pages={133--163},
	year={2020},
	publisher={SIAM}
}

@article{harshman1970foundations,
	title={Foundations of the PARAFAC procedure: Models and conditions for an “explanatory” multi-modal factor analysis},
	author={Harshman, Richard A and others},
	journal={UCLA Working Papers in Phonetics},
	volume={16},
	number={1},
	pages={84},
	year={1970},
	publisher={Los Angeles, CA}
}

@article{kolda2009tensor,
	title={Tensor decompositions and applications},
	author={Kolda, Tamara G and Bader, Brett W},
	journal={SIAM Review},
	volume={51},
	number={3},
	pages={455--500},
	year={2009},
	publisher={SIAM}
}

@article{liu2019low,
	title={Low CP rank and tucker rank tensor completion for estimating missing components in image data},
	author={Liu, Yipeng and Long, Zhen and Huang, Huyan and Zhu, Ce},
	journal={IEEE Transactions on Circuits and Systems for Video Technology},
	volume={30},
	number={4},
	pages={944--954},
	year={2019},
	publisher={IEEE}
}

@article{chen2022unsupervised,
	title={Unsupervised feature selection via graph regularized nonnegative CP decomposition},
	author={Chen, Bilian and Guan, Jiewen and Li, Zhening},
	journal={IEEE Transactions on Pattern Analysis and Machine Intelligence},
	volume={45},
	number={2},
	pages={2582--2594},
	year={2022},
	publisher={IEEE}
}

@article{chabriel2014joint,
	title={Joint matrices decompositions and blind source separation: A survey of methods, identification, and applications},
	author={Chabriel, Gilles and Kleinsteuber, Martin and Moreau, Eric and Shen, Hao and Tichavsky, Petr and Yeredor, Arie},
	journal={IEEE Signal Processing Magazine},
	volume={31},
	number={3},
	pages={34--43},
	year={2014},
	publisher={IEEE}
}

@article{zhao2022fast,
	title={Fast weighted CP decomposition for context-aware recommendation with explicit and implicit feedback},
	author={Zhao, Jianli and Zheng, Shidong and Huo, Huan and Gong, Maoguo and Zhang, Tianheng and Qu, Lijun},
	journal={Expert Systems with Applications},
	volume={204},
	pages={117591},
	year={2022},
	publisher={Elsevier}
}

@article{miao2021color,
	title={Color image recovery using low-rank quaternion matrix completion algorithm},
	author={Miao, Jifei and Kou, Kit Ian},
	journal={IEEE Transactions on Image Processing},
	volume={31},
	pages={190--201},
	year={2021},
	publisher={IEEE}
}

@article{jia2025new,
	title={A new cross-space total variation regularization model for color image restoration with quaternion blur operator},
	author={Jia, Zhigang and Xiang, Yuelian and Zhao, Meixiang and Wu, Tingting and Ng, Michael K},
	journal={IEEE Transactions on Image Processing},
	volume={34},
	pages={995--1008},
	year={2025},
	publisher={IEEE}
}

@article{chen2022color,
	title={Color image inpainting via robust pure quaternion matrix completion: Error bound and weighted loss},
	author={Chen, Junren and Ng, Michael K},
	journal={SIAM Journal on Imaging Sciences},
	volume={15},
	number={3},
	pages={1469--1498},
	year={2022},
	publisher={SIAM}
}

@article{he2023eigenvalues,
	title={Eigenvalues of quaternion tensors with applications to color video processing},
	author={He, Zhuo-Heng and Wang, Xiang-Xiang and Zhao, Yun-Fan},
	journal={Journal of Scientific Computing},
	volume={94},
	number={1},
	pages={1},
	year={2023},
	publisher={Springer}
}

@article{jia2022non,
	title={Non-local robust quaternion matrix completion for large-scale color image and video inpainting},
	author={Jia, Zhigang and Jin, Qiyu and Ng, Michael K and Zhao, Xi-Le},
	journal={IEEE Transactions on Image Processing},
	volume={31},
	pages={3868--3883},
	year={2022},
	publisher={IEEE}
}

@article{kawai2025quaternion,
	title={Quaternion reservoir computing for spatiotemporal analysis in polarimetric synthetic aperture radar},
	author={Kawai, Kitoshi and Konishi, Bungo and Natsuaki, Ryo and Hirose, Akira},
	journal={Neurocomputing},
	pages={131633},
	year={2025},
	publisher={Elsevier}
}

@article{miao2020quaternion,
	title={Quaternion-based bilinear factor matrix norm minimization for color image inpainting},
	author={Miao, Jifei and Kou, Kit Ian},
	journal={IEEE Transactions on Signal Processing},
	volume={68},
	pages={5617--5631},
	year={2020},
	publisher={IEEE}
}

@article{song2021low,
	title={Low rank pure quaternion approximation for pure quaternion matrices},
	author={Song, Guangjing and Ding, Weiyang and Ng, Michael K},
	journal={SIAM Journal on Matrix Analysis and Applications},
	volume={42},
	number={1},
	pages={58--82},
	year={2021},
	publisher={SIAM}
}

@article{chen2019low,
	title={Low-rank quaternion approximation for color image processing},
	author={Chen, Yongyong and Xiao, Xiaolin and Zhou, Yicong},
	journal={IEEE Transactions on Image Processing},
	volume={29},
	pages={1426--1439},
	year={2019},
	publisher={IEEE}
}

@article{miao2020low,
	title={Low-rank quaternion tensor completion for recovering color videos and images},
	author={Miao, Jifei and Kou, Kit Ian and Liu, Wankai},
	journal={Pattern Recognition},
	volume={107},
	pages={107505},
	year={2020},
	publisher={Elsevier}
}

@article{yang2025randomized,
	title={Randomized quaternion tensor UTV decompositions for color image and color video processing},
	author={Yang, Liqiao and Miao, Jifei and Jiang, Tai-Xiang and Zhang, Yanlin and Kou, Kit Ian},
	journal={Pattern Recognition},
	volume={165},
	pages={111580},
	year={2025},
	publisher={Elsevier}
}

@article{yang2026nonlinear,
	title={Nonlinear Transformed Low-Rank Quaternion Tensor Total Variation for Multidimensional Color Image Completion},
	author={Yang, Liqiao and Hu, Yexun and Jiang, Tai-Xiang and Wei, Yimin and Liu, Guisong and Ng, Michael K},
	journal={IEEE Transactions on Image Processing},
	year={2026},
	publisher={IEEE}
}

@article{flamant2026multilinear,
	title={Multilinear analysis of quaternion arrays: theory and computation},
	author={Flamant, Julien and Luciani, Xavier and Miron, Sebastian and Zniyed, Yassine},
	journal={SIAM Journal on Matrix Analysis and Applications},
	volume={47},
	number={1},
	pages={158--184},
	year={2026},
	publisher={SIAM}
}

@book{hamilton1866elements,
	title={Elements of quaternions},
	author={Hamilton, William Rowan},
	year={1866},
	publisher={London: Longmans, Green, \& Company}
}

@article{de2008tensor,
	title={Tensor rank and the ill-posedness of the best low-rank approximation problem},
	author={De Silva, Vin and Lim, Lek-Heng},
	journal={SIAM Journal on Matrix Analysis and Applications},
	volume={30},
	number={3},
	pages={1084--1127},
	year={2008},
	publisher={SIAM}
}

@article{flamant2020quaternion,
	title={Quaternion non-negative matrix factorization: Definition, uniqueness, and algorithm},
	author={Flamant, Julien and Miron, Sebastian and Brie, David},
	journal={IEEE Transactions on Signal Processing},
	volume={68},
	pages={1870--1883},
	year={2020},
	publisher={IEEE}
}

@article{miao2024quaternion,
	title={Quaternion tensor left ring decomposition and application for color image inpainting},
	author={Miao, Jifei and Kou, Kit Ian and Cai, Hongmin and Liu, Lizhi},
	journal={Journal of Scientific Computing},
	volume={101},
	number={1},
	pages={1},
	year={2024},
	publisher={Springer}
}

@ARTICLE{7010937,
  author={Zhao, Qibin and Zhang, Liqing and Cichocki, Andrzej},
  journal={IEEE Transactions on Pattern Analysis and Machine Intelligence}, 
  title={Bayesian CP Factorization of Incomplete Tensors with Automatic Rank Determination}, 
  year={2015},
  volume={37},
  number={9},
  pages={1751-1763}}

@article{zhang1997quaternions,
	title={Quaternions and matrices of quaternions},
	author={Zhang, Fuzhen},
	journal={Linear Algebra and its Applications},
	volume={251},
	pages={21--57},
	year={1997},
	publisher={Elsevier}
}

@book{rodman2014topics,
	title={Topics in quaternion linear algebra},
	author={Rodman, Leiba},
	year={2014},
	publisher={Princeton University Press}
}

@article{LiZZC25,
  author       = {Ben{-}Zheng Li and
                  Xi{-}Le Zhao and
                  Hao Zhang and
                  Delin Chu},
  title        = {Dictionary-Based Block Term Decomposition for Third-Order Tensors},
  journal      = {SIAM Journal on Imaging Sciences},
  volume       = {18},
  number       = {4},
  pages        = {2347--2377},
  year         = {2025}}

@article{Pan2022SeparableQM,
  title={Separable Quaternion Matrix Factorization for Polarization Images},
  author={Junjun Pan and Michael K. Ng},
  journal={SIAM Journal on Imaging Sciences},
  year={2022},
  volume={16},
  pages={1281-1307}}

@article{valverde2018dual,
 	title={Dual quaternion framework for modeling of spacecraft-mounted multibody robotic systems},
 	author={Valverde, Alfredo and Tsiotras, Panagiotis},
 	journal={Frontiers in Robotics and AI},
 	volume={5},
 	pages={128},
 	year={2018},
 	publisher={Frontiers Media SA}
}

\end{document}